\numberwithin{equation}{subsection}
\theoremstyle{plain}
\newtheorem{thm}[equation]{Theorem}
\newtheorem*{thm*}{Theorem}
\newtheorem{prop}[equation]{Proposition}
\newtheorem*{prop*}{Proposition}
\newtheorem{cor}[equation]{Corollary}
\newtheorem*{cor*}{Corollary}
\newtheorem{lem}[equation]{Lemma}
\newtheorem*{lem*}{Lemma}
\newtheorem*{conj*}{Conjecture}
\theoremstyle{definition}
\newtheorem*{defn*}{Definition}
\newtheorem*{eg*}{Example}
\newtheorem*{ex*}{Exercise}
\newtheorem{rk}[equation]{Remark}
\newtheorem*{rk*}{Remark}
\newtheorem*{ntn*}{Notation}
\theoremstyle{plain}
\newtheorem*{mthm}{Main Theorem}
\renewcommand{\L}{\ensuremath{\mathscr{L}}\xspace}
\DeclareMathOperator{\Adm}{Adm}
\DeclareMathOperator{\Conv}{Conv}
\DeclareMathOperator{\Perm}{Perm}
\newcommand{\aff}{\ensuremath{\mathrm{a}}\xspace}
\newcommand{\loc}{\ensuremath{\mathrm{loc}}\xspace}
\newcommand{\naive}{\ensuremath{\mathrm{naive}}\xspace}
\newcommand{\spin}{\ensuremath{\mathrm{spin}}\xspace}
\begin{document}

\renewcommand{\O}{\ensuremath{\mathscr{O}}\xspace}

\title[Topological flatness of ramified unitary local models. I.]{Topological flatness of local\\ models for ramified unitary groups.\\  I.  The odd dimensional case}
\author{Brian D. Smithling}
\address{University of Toronto, Department of Mathematics, 40 St.\ George St.,\ Toronto, ON  M5S 2E4, Canada}
\email{bds@math.toronto.edu}
\subjclass[2010]{Primary 14G35; Secondary 05E15; 11G18; 17B22}
\keywords{Local model; Shimura variety; unitary group; Iwahori-Weyl group, admissible set}

\begin{abstract}
Local models are certain schemes, defined in terms of linear-alge\-bra\-ic moduli problems, which give \'etale-local neighborhoods of integral models of certain $p$-adic PEL Shimura varieties defined by Rapoport and Zink.  When the group defining the Shimura variety ramifies at $p$, the local models (and hence the Shimura models) as originally defined can fail to be flat, and it becomes desirable to modify their definition so as to obtain a flat scheme.  In the case of unitary similitude groups whose localizations at $\QQ_p$ are ramified, quasi-split $GU_n$, Pappas and Rapoport have added new conditions, the so-called wedge and spin conditions, to the moduli problem defining the original local models and conjectured that their new local models are flat.  We prove a preliminary form of their conjecture, namely that their new models are topologically flat, in the case $n$ is odd.
\end{abstract}
\maketitle

\section{Introduction}
A basic problem for a given Shimura variety is to define a reasonable model for the variety over the ring of integers of its reflex field.  In \cite{rapzink96}, Rapoport and Zink define natural models for certain PEL Shimura varieties with parahoric level structure at $p$ over the ring of integers in the completion of the reflex field at any place dividing $p$.  In addition, they reduce many aspects of the study of these models to the associated \emph{local models;} these give \'etale-local neighborhoods of the Rapoport-Zink models which are defined in terms of purely linear-algebraic moduli problems, and thus --- at least in principle --- are more amenable to direct investigation.

There has been much study of local models in various cases since the appearance of Rapoport and Zink's book; see, for example, work of Pappas \cite{pap00}, G\"ortz \cites{goertz01,goertz03,goertz04,goertz05}, Haines and Ng\^o \cite{hngo02b}, Pappas and Rapoport \cites{paprap03,paprap05,paprap08,paprap09}, Kr\"amer \cite{kr03}, Arzdorf \cite{arz09}, and the author \cite{sm09a}.  Beginning with Pappas's paper \cite{pap00}, it has come to be understood that when the group defining the Shimura variety is \emph{ramified} at $p$ --- that is, the base change of the group to $\QQ_p$ splits only after passing to a ramified extension of $\QQ_p$ --- then the associated model and local model need not satisfy one of the most basic criteria for reasonableness, namely they need not be flat.  In such cases, the models and local models defined by Rapoport and Zink have come to be renamed \emph{naive} models and local models, respectively, with the true models and local models defined as the scheme-theoretic closures of the generic fibers in the respective naive models and local models.

Sparked by Pappas's observation, it has become an important part of the subject to better understand local models in the presence of ramification;
see \cites{goertz05,pap00,paprap03,paprap05,paprap08,paprap09}.  In particular, it is an interesting problem to obtain a \emph{moduli-theoretic description} of the local model in such cases, ideally by refining the moduli problem that describes the naive local model.  This is the problem of concern in this paper and its sequel \cite{sm10a}, in one of the basic cases in which ramification arises:  a unitary similitude group attached to an imaginary quadratic number field ramified at $p\neq 2$.

More precisely, let $F/F_0$ be a ramified quadratic extension of discretely valued, non-Archimedean fields of residual characteristic not $2$.  Endow $F^n$, $n \geq 3$, with the $F/F_0$-Hermitian form $\phi$ specified by the values $\phi(e_i,e_j) = \delta_{i,n+1-j}$ on the standard basis vectors $e_1,\dotsc$, $e_n$, and consider the reductive group $GU_{n}:= GU(\phi)$ over $F_0$.  In this paper we consider exclusively the case that $n$ is odd; the case of even $n$ will be treated in \cite{sm10a}.  In the odd case, every parahoric subgroup of $GU_n(F_0)$ can be described as the stabilizer of a self-dual periodic lattice chain in $F^n$, and the conjugacy classes of parahoric subgroups can be naturally parametrized by the nonempty subsets of $\{0,\dotsc,m\}$, where $n = 2m+1$; see \eqref{st:parahoric_description} or, for more details, \cite{paprap09}*{\s1.2.3(a)}.  Identifying $G \tensor_{F_0} F \iso GL_{n,F} \times \GG_{m,F}$, let $\mu_{r,s}$ denote the cocharacter $\bigl(1^{(s)},0^{(r)},1\bigr)$ of $D \times \GG_{m,F}$, where $D$ denotes the standard maximal torus of diagonal matrices in $GL_{n,F}$ and $r$ and $s$ are nonnegative integers with $r + s = n$.  In the special case that $F_0 = \QQ_p$ and $(F^n,\phi)$ is isomorphic to the $\QQ_p$-localization of a Hermitian space $(K^n,\psi)$ with $K$ an imaginary quadratic number field, the pair $(r,s)$ denotes the signature of $(K^n,\psi)$ and $\mu_{r,s}$ is a cocharacter obtained in the usual way from the Shimura datum attached to the associated unitary similitude group, as in \cite[\s1.1]{paprap09}.


For nonempty $I \subset \{0,\dotsc,m\}$, we may then consider the \emph{naive local model $M_I^\naive$} in the sense of \cite{rapzink96}; this is a projective $\O_F$-scheme whose explicit definition we recall in \s\ref{ss:naive_lm}.  As \cite{pap00} observes, the naive local model fails to be flat over $\O_F$ in general.  
In response, the papers \cites{pap00,paprap09} attempt to correct for non-flatness by adding new conditions to the moduli problem defining $M_I^\naive$:  
first Pappas adds the \emph{wedge condition} to define a closed subscheme $M_I^\wedge \subset M_I^\naive$, the \emph{wedge local model} (see \s\ref{ss:wedge_cond}); and then Pappas and Rapoport add a further condition, the \emph{spin condition}, to define a third closed subscheme $M_I^\spin \subset M_I^\wedge$, the \emph{spin local model} (see \s\ref{ss:spin_cond}).  The schemes $M_I^\naive$, $M_I^\wedge$, $M_I^\spin$, and the honest local model $M_I^\loc$ all have common generic fiber, and Pappas and Rapoport conjecture the following.

\begin{conj*}[Pappas-Rapoport \cite{paprap09}*{7.3}]
$M_I^\spin$ coincides with the local model $M_I^\loc$ inside $M_I^\naive$; or in other words, $M_I^\spin$ is flat over $\O_F$.
\end{conj*}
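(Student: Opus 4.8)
The plan is to deduce the full conjecture from two assertions about the special fiber, glued together by an elementary argument over the discrete valuation ring $\O_F$. Write $k$ for the residue field and $\pi$ for a uniformizer. Since $M_I^\loc$ is by definition the scheme-theoretic closure of the common generic fiber inside $M_I^\naive$, and $M_I^\loc \subset M_I^\spin \subset M_I^\naive$ as closed subschemes with the same generic fiber, the scheme $M_I^\loc$ is $\O_F$-flat and is likewise the scheme-theoretic closure of the generic fiber inside $M_I^\spin$; hence the ideal sheaf $\mathscr I$ of $M_I^\loc$ in $M_I^\spin$ is exactly the subsheaf of $\pi$-power-torsion sections, and the conjecture is equivalent to $\mathscr I = 0$. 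I will derive this from (T), the statement that $M_I^\spin \tensor_{\O_F} k$ and $M_I^\loc \tensor_{\O_F} k$ have the same underlying topological space --- topological flatness of $M_I^\spin$, which is the statement actually established here for $n$ odd --- together with (R), the statement that $M_I^\spin \tensor_{\O_F} k$ is reduced. Granting (T) and (R): the base-changed closed immersion $M_I^\loc \tensor_{\O_F} k \hookrightarrow M_I^\spin \tensor_{\O_F} k$ is surjective by (T), its target is reduced by (R), so the immersion's defining ideal, being contained in the nilradical of a reduced scheme, is zero; thus $M_I^\loc \tensor_{\O_F} k = M_I^\spin \tensor_{\O_F} k$, which forces every local section of $\mathscr I$ to be divisible by $\pi$. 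On an affine open with ring of functions $A$, the corresponding ideal $T \subset A$ then satisfies $T \subset \pi A$ and is killed by some power $\pi^N$; iterating these two facts shows $T \subset \bigcap_{m \geq 0}\pi^m A$, and the Krull intersection theorem provides $c \in A$ with $(1 - \pi c)T = 0$, whence $T = (\pi c)^N T = c^N(\pi^N T) = 0$. Therefore $\mathscr I = 0$ and $M_I^\spin = M_I^\loc$.

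Statement (T) is approached through the local-model diagram and the combinatorics of the Iwahori--Weyl group. Realizing the parahoric subgroups of $GU_n(F_0)$ as stabilizers of self-dual periodic lattice chains, one obtains a stratification of $M_I^\naive \tensor_{\O_F} k$ into Schubert cells $S_w$, with $w$ running over the relevant part of the Iwahori--Weyl group $\widetilde W$ of $GU_n$, and inclusions $\bigcup_{w \in \Adm(\mu_{r,s})}|\overline{S_w}| \subset |M_I^\loc \tensor_{\O_F} k| \subset |M_I^\spin \tensor_{\O_F} k|$ --- the first because admissible Schubert cells lie in the closure of the generic fiber, the second because $M_I^\loc \subset M_I^\spin$. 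Thus (T) follows once $|M_I^\spin \tensor_{\O_F} k| \subset \bigcup_{w \in \Adm(\mu_{r,s})}|\overline{S_w}|$ is shown. Now $M_I^\spin \tensor_{\O_F} k$ is stable under the relevant loop group, hence a union of Schubert varieties $\overline{S_w}$; the wedge condition forces the occurring $w$ to be $\mu_{r,s}$-permissible (cf.\ \cite{pap00}, \cite{paprap09}); and so it remains to prove the purely combinatorial assertion that, for $n$ odd, a $\mu_{r,s}$-permissible $w$ with $\overline{S_w} \subset M_I^\spin \tensor_{\O_F} k$ is already $\mu_{r,s}$-admissible --- equivalently, every element of $\Perm(\mu_{r,s}) \setminus \Adm(\mu_{r,s})$ violates the spin condition of \cite{paprap09}. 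This I would carry out by unwinding the spin condition into an explicit sign constraint on the coweight underlying $w$ and testing it against the explicit combinatorial descriptions of $\Perm(\mu_{r,s})$ and $\Adm(\mu_{r,s})$ in the odd case; this combinatorial core is where the bulk of the work toward (T) lies.

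The genuinely hard step, and the one left open here, is (R): the reducedness of $M_I^\spin \tensor_{\O_F} k$. Once (T) is known, the underlying space of $M_I^\spin \tensor_{\O_F} k$ is $\bigcup_{w \in \Adm(\mu_{r,s})}|\overline{S_w}|$, which with its reduced structure is a union of Schubert varieties in the twisted affine flag variety of $GU_n$; the task is to show that the wedge and spin conditions cut $M_I^\naive$ down, along the special fiber, to exactly this reduced scheme rather than to one carrying nilpotent fuzz. I see two natural lines of attack. The first is to prove directly that $M_I^\spin$ is Cohen--Macaulay of the expected dimension with no irreducible component contained in its special fiber: then $\pi$ is a nonzerodivisor on the local rings of $M_I^\spin$, so $M_I^\spin$ is $\O_F$-flat, hence equal to $M_I^\loc$, bypassing (R) altogether. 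The second is a Frobenius-splitting argument: realize $M_I^\spin$ as a single fiber of a global family over a curve, of Beilinson--Drinfeld type in the spirit of the group-theoretic construction of local models, and exhibit a Frobenius splitting of the total space compatible with the special fiber, which would force $M_I^\spin \tensor_{\O_F} k$ to be reduced. Both lines ultimately require identifying $M_I^\spin$ with the group-theoretic local model inside $M_I^\naive$ --- showing that the spin condition is precisely the scheme-theoretic shadow of that model --- and promoting the present set-theoretic comparison to this scheme-theoretic identification is, I expect, the principal obstacle to a complete proof of the conjecture.
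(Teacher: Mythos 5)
The statement you have attempted is the Pappas--Rapoport conjecture, which this paper does not prove: the paper explicitly leaves it open and establishes only a preliminary form, namely topological flatness of $M_I^\spin$ (the Main Theorem). Your proposal likewise does not prove it, and you say so yourself. What you do prove correctly is the reduction: the elementary argument over $\O_F$ (the ideal of $M_I^\loc$ in $M_I^\spin$ is the $\pi$-power-torsion ideal; equality of special fibers as schemes forces it into $\bigcap_m \pi^m$, hence to vanish by Krull intersection) shows that (T) topological flatness together with (R) reducedness of $M_I^\spin \otimes_{\O_F} k$ would give $M_I^\spin = M_I^\loc$. Your sketch of (T) follows essentially the paper's route --- embed $M_{I,k}^\naive$ in the affine flag variety, note that $M_{I,k}^\spin$ is a union of Schubert varieties, and identify the indexing set with the $\mu_{r,s}$-admissible set --- though one detail runs differently: in the odd case the paper shows the spin condition is \emph{automatically} satisfied at the relevant $S$-fixed points, so spin-permissibility equals wedge-permissibility \eqref{st:wedge_iff_spin-perm}, and the combinatorial core is the equivalence of wedge-permissibility with $\mu_{r,s}$-admissibility (via the Haines--Ng\^o comparison for $GSp_{2m}$); indeed $\mu_{r,s}$-admissibility and $\mu_{r,s}$-permissibility coincide here \eqref{st:perm_adm}, so there are no permissible-but-inadmissible elements for the spin condition to eliminate, contrary to the mechanism you envisage.

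The genuine gap is (R), or either of your proposed substitutes (Cohen--Macaulayness of $M_I^\spin$ with no component in the special fiber, or a Frobenius-splitting argument for a global degeneration): nothing in your proposal establishes reducedness of $M_I^\spin \otimes_{\O_F} k$ or flatness by any other means, and neither does this paper. That step is not a technicality to be deferred --- it is precisely the content of the conjecture beyond the Main Theorem, and it is why the conjecture is recorded here as open. Your text should therefore be presented as a correct reduction of the conjecture to topological flatness plus reducedness of the special fiber (the first of which is the theorem actually proved here), not as a proof of the conjecture.
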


Although the conjecture remains open in general, Pappas and Rapoport have obtained a good deal of computer evidence in support of it.  The main result of this paper is essentially a preliminary form of the conjecture, which we state precisely as follows.

\begin{mthm}
The schemes  $M_I^\spin$ and $M_I^\wedge$ are topologically flat over $\O_F$; or in other words, the underlying topological spaces of $M_I^\spin$, $M_I^\wedge$, and $M_I^\loc$ coincide.
\end{mthm}

See \eqref{st:toplogical_agreement} and \eqref{st:top_flat}; recall that a scheme over a regular, integral, $1$-dimensional base scheme is \emph{topologically flat} if its generic fiber is dense.  The theorem notwithstanding, the scheme structures on $M_I^\wedge$ and $M_I^\spin$ really can differ, and it is only $M_I^\spin$ that is conjectured to be flat in general; see \cite{paprap09}*{7.4(iv)}.  We shall show in \cite{sm10a} that the spin local models $M_I^\spin$ attached to \emph{even} ramified unitary groups are also topologically flat; here the schemes $M_I^\wedge$ and $M_I^\spin$ usually do not even agree set-theoretically.

Following G\"ortz \cite{goertz01} (see also \cites{goertz03,goertz05,paprap03,paprap05,paprap08,paprap09,sm09a}), the key technique in the proof of the Main Theorem is to embed the special fiber of $M_I^\naive$ in an appropriate affine flag variety, where it and the special fibers of $M_I^\wedge$, $M_I^\spin$, and $M_I^\loc$ become stratified into finitely many Schubert cells.  Pappas and Rapoport show that the Main Theorem follows from showing that the Schubert cells in the special fibers of $M_I^\wedge$ and $M_I^\spin$ are indexed by the \emph{$\mu_{r,s}$-admissible set;} see \s\ref{ss:top_flat}.
We solve this problem by translating it into an equivalent one for $GSp_{2m}$, which in turn boils down to obtaining a concrete description of the admissible set for the cocharacter $\bigl(2^{(s)},1^{(2m-2s)},0^{(s)}\bigr)$ of $GSp_{2m}$.  Here we make key use of a result of Haines and Ng\^o \cite{hngo02a} that describes admissible sets for $GSp_{2m}$ in terms of \emph{permissible} sets for $GL_{2m}$.  As a byproduct of our considerations, we show that the notions of $\bigl(2^{(s)},1^{(2m-2s)},0^{(s)}\bigr)$-admissibility and $\bigl(2^{(s)},1^{(2m-2s)},0^{(s)}\bigr)$-permissibility for $GSp_{2m}$ are equivalent.

Finally, we remark that failure of flatness of the models in \cite{rapzink96} is not a phenomenon related solely to ramification:  as observed by Genestier, the local models in \cite{rapzink96} attached to split even orthogonal groups also fail to be flat in general.  See \cites{paprap09,sm09a}.

We now outline the contents of the paper.  Sections \ref{s:loc_mod}--\ref{s:afv} consist almost entirely of review from \cite{paprap09}:  in \s\ref{s:loc_mod} we review the definitions of the various local models, in \s\ref{s:GU} we review some group-theoretic aspects of ramified $GU_n$, and in \s\ref{s:afv} we review the embedding of the special fiber of the local model into an appropriate affine flag variety attached to $GU_n$.  In \s\ref{s:schub_cells} we obtain combinatorial descriptions of the Schubert cells contained in the special fibers of $M_{I}^\wedge$ and $M_{I}^\spin$ inside the affine flag variety, and we reduce the Main Theorem to showing that these cells are indexed by the $\mu_{r,s}$-admissible set.  In \s\ref{s:combinatorics} we solve this last problem, as described above.

\subsection*{Acknowledgments}
I thank Michael Rapoport for his encouragement to work on this problem and for introducing me to the subject.  I also thank him and Robert Kottwitz for reading and offering comments on a preliminary version of this article; and the referee for carefully reading the article and suggesting some corrections and improvements.  Part of the writing of this article was undertaken at the Hausdorff Research Institute for Mathematics in Bonn, which I thank for its hospitality and excellent working conditions.

\subsection*{Notation}
We fix once and for all an odd integer $n = 2m+1 \geq 3$ and a partition $n = s + r$ with $0 \leq s \leq m$, so that $s < r$.  Although almost everything we shall do will depend on $n$ and this partition, we shall usually not embed these choices into the notation.

We let $F/F_0$ denote a ramified quadratic extension of discretely valued, non-Archimedean fields with respective rings of integers $\O_F$ and $\O_{F_0}$, respective uniformizers $\pi$ and $\pi_0$ satisfying $\pi^2 = \pi_0$, and common residue field $k$ of characteristic not $2$.  We also employ an auxiliary ramified quadratic extension $K/K_0$ of discretely valued, non-Archimedean Henselian fields with respective rings of integers $\O_{K}$ and $\O_{K_0}$, respective uniformizers $u$ and $t$ satisfying $u^2 = t$, and the same residue field $k$; eventually $K$ and $K_0$ will be the fields of Laurent series $k((u))$ and $k((t))$, respectively.  We put $\Gamma:= \Gal(K/K_0)$, and we write $x \mapsto \ol x$ for the action of the nontrivial element of $\Gamma$ on $K$; then $\ol u = -u$.  Abusing notation, we continue to write $x \mapsto \ol x$ for the $R$-algebra automorphism of $K \tensor_{K_0} R$ induced by any base change $K_0 \to R$.

We relate objects by writing $\iso$ for isomorphic, $\ciso$  for canonically isomorphic, and $=$  for equal.

Given a vector $v \in \RR^l$, we write $v(j)$ for its $j$th entry, $\Sigma v$ for the sum of its entries, and $v^*$ for the vector in $\RR^l$ defined by $v^*(j) = v(l+1-j)$.  Given another vector $w$, we write $v \geq w$ if $v(j) \geq w(j)$ for all $j$.  We write $\mathbf d$ for the vector $(d,d,\dotsc,d)$, leaving it to context to make clear the number of entries.  The expression $\bigl(d^{(i)}, e^{(j)}, \dotsc\bigr)$ denotes the vector with $d$ repeated $i$ times, followed by $e$ repeated $j$ times, and so on.

We write $S_l$ for the symmetric group on $1,\dotsc,$ $l$, and $S_l^*$ for its subgroup
\[
   S_l^* := \bigl\{\, \sigma \in S_l \bigm| \sigma(l+1-j) = l+1 - \sigma(j)
                     \text{ for all } j \in \{1,\dotsc,l\}\,\bigr\}.
\]


\section{Unitary local models}\label{s:loc_mod}

We begin by recalling the definition and some of the discussion of local models for odd ramified unitary groups from \cite{paprap09}.  Let $I \subset \{0,\dotsc,m\}$ be a nonempty subset.

\subsection{Pairings}\label{ss:pairings}
Let $V := F^{n}$.  In this subsection we introduce some pairings on $V$ and notation related to them which we'll use throughout the article.

Let $e_1$, $e_2,\dotsc$, $e_{n}$ denote the standard ordered $F$-basis in $V$, and let
\[
   \phi\colon V \times V \to F
\]
be the $F/F_0$-Hermitian form on $V$ whose matrix with respect to the standard basis is
\begin{equation}\label{disp:antidiag_1}
   \begin{pmatrix}
     &  &  1\\
     & \iddots\\
     1
   \end{pmatrix}.
\end{equation}
We attach to $\phi$ the alternating and symmetric $F_0$-bilinear forms $V \times V \to F_0$ given respectively by
\begin{equation}\label{disp:pairings}
   \langle x,y \rangle := \tfrac 1 2 \Tr_{F/F_0}\bigl( \pi^{-1}\phi(x,y) \bigr)
   \quad\text{and}\quad
   (x,y) := \tfrac 1 2 \Tr_{F/F_0}\bigl( \phi(x,y) \bigr).
\end{equation}

For any $\O_F$-lattice $\Lambda \subset V$, we denote by $\wh \Lambda$ the $\phi$-dual of $\Lambda$,
\begin{equation}\label{disp:wh_Lambda}
   \wh \Lambda := \bigl\{\, x\in V \bigm| \phi(\Lambda,x) \subset \O_F \,\bigr\}.
\end{equation}
Then $\wh\Lambda$ is also the $\langle$~,~$\rangle$-dual of $\Lambda$,
\[
   \wh\Lambda = \bigl\{\,  x\in V \bigm| \langle \Lambda,x\rangle \subset \O_{F_0} \,\bigr\};
\]
and $\wh\Lambda$ is related to the $($~,~$)$-dual $\wh\Lambda^s := \{\,  x\in V \mid ( \Lambda,x ) \subset \O_{F_0} \,\}$ by the formula $\wh\Lambda^s = \pi^{-1}\wh\Lambda$.  Both $\wh\Lambda$ and $\wh\Lambda^s$ are $\O_F$-lattices in $V$, and the forms $\langle$~,~$\rangle$ and $($~,~$)$ induce perfect $\O_{F_0}$-bilinear pairings
\begin{equation}\label{disp:perf_pairing}
   \Lambda \times \wh\Lambda \xra{\text{$\langle$~,~$\rangle$}} \O_{F_0}
   \quad\text{and}\quad
   \Lambda \times \wh\Lambda^s \xra{\text{$($~,~$)$}} \O_{F_0}
\end{equation}
for all $\Lambda$.

\subsection{Standard lattices}\label{ss:lattices}

For $i = nb+c$ with $0 \leq c < n$, we define the \emph{standard $\O_F$-lattice}
\begin{equation}\label{disp:Lambda_i}
   \Lambda_i := \sum_{j=1}^c\pi^{-b-1}\O_F e_j + \sum_{j=c+1}^{n} \pi^{-b}\O_F e_j \subset V.
\end{equation}
Then $\wh\Lambda_i = \Lambda_{-i}$ for all $i$, and the $\Lambda_i$'s form a complete, periodic, self-dual lattice chain
\[
   \dotsb \subset \Lambda_{-2} \subset \Lambda_{-1} \subset \Lambda_0 \subset \Lambda_1 \subset \Lambda_2 \subset \dotsb,
\]
which we call the \emph{standard lattice chain}.  More generally, for any nonempty subset $I \subset \{0,\dotsc,m\}$, we denote by $\Lambda_I$ the periodic, self-dual subchain of the standard chain consisting of all lattices of the form $\Lambda_i$ for $i \in n\ZZ \pm I$.  Of course, in this way $\Lambda_{\{0,\dotsc,m\}}$ denotes the standard chain itself.

The standard lattice chain admits the following obvious trivialization.  Let $\epsilon_1,\dotsc$, $\epsilon_n$ denote the standard basis of $\O_F^n$, and let $\beta_i\colon \O_F^{n} \to \O_F^{n}$ multiply $\epsilon_i$ by $\pi$ and send all other standard basis elements to themselves.  Then there is a unique isomorphism of chains of $\O_F$-modules
\begin{equation}\label{disp:latt_triv}
   \vcenter{
   \xymatrix{
      \dotsb\, \ar@{^{(}->}[r]
         & \Lambda_0 \ar@{^{(}->}[r] 
         & \Lambda_1 \ar@{^{(}->}[r]
         & \,\dotsb\, \ar@{^{(}->}[r] 
         & \Lambda_{n} \ar@{^{(}->}[r]
         & \,\dotsb\\
      \dotsb\, \ar[r]^-{\beta_{n}}
         & \O_F^{n} \ar[u]_\sim \ar[r]^-{\beta_1}
         & \O_F^{n} \ar[u]_-\sim \ar[r]^-{\beta_2}
         & \,\dotsb\, \ar[r]^-{\beta_{n}}
         & \O_F^{n} \ar[u]_\sim \ar[r]^-{\beta_1}
         & \,\dotsb
   }
   }
\end{equation}
such that the leftmost displayed vertical arrow identifies the ordered $\O_F$-basis $\epsilon_1,\dotsc,\epsilon_n$ of $\O_F^{n}$ with the ordered basis $e_1,\dotsc,e_{n}$ of $\Lambda_0$.  Restricting to subchains in the top and bottom rows in \eqref{disp:latt_triv}, we get an analogous trivialization of $\Lambda_I$ for any $I$.

\subsection{Naive local models}\label{ss:naive_lm}
We now review the definition of the naive local models from \cite{paprap09}*{\s1.5}.

Recall our fixed partition $n = s + r$ with $s < r$.  The \emph{naive local model
$M_I^\naive$} is the following contravariant functor on the
category of $\O_F$-algebras.  Given an $\O_F$-algebra $R$, an $R$-point in $M_I^\naive$ consists of, up to an obvious notion of isomorphism,
\begin{itemize}
\item
   a functor
   \[
      \xymatrix@R=0ex{
         \Lambda_I \ar[r] & {}(\text{$\O_F \tensor_{\O_{F_0}} R$-modules})\\
         \Lambda_i \ar@{|->}[r] & \F_i,
      }
   \]
   where $\Lambda_I$ is regarded as a category by taking the morphisms to be the inclusions of lattices in $V$; together with
\item
   an inclusion of $\O_F \tensor_{\O_{F_0}} R$-modules $\F_i \inj \Lambda_i \tensor_{\O_{F_0}} R$ for each $i \in n\ZZ \pm I$, functorial in $\Lambda_i$;
\end{itemize}
satisfying the following conditions for all $i \in n\ZZ \pm I$.
\begin{enumerate}
\renewcommand{\theenumi}{LM\arabic{enumi}}
\item 
   Zariski-locally on $\Spec R$, $\F_i$ embeds in $\Lambda_i \tensor_{\O_{F_0}} R$ as a direct $R$-module summand of rank $n$.
\item\label{it:period_cond}
   The isomorphism $\Lambda_i \tensor_{\O_{F_0}} R \isoarrow \Lambda_{i - n} \tensor_{\O_{F_0}} R$ obtained by tensoring $\Lambda_i \xra[\sim]\pi \pi \Lambda_i = \Lambda_{i - n}$ identifies $\F_i$ with $\F_{i - n}$.
\item\label{it:perp_cond}
   The perfect $R$-bilinear pairing 
   \[
      (\Lambda_i \tensor_{\O_{F_0}} R) 
         \times (\Lambda_{-i} \tensor_{\O_{F_0}} R)
      \xra{\text{$\langle$~,~$\rangle$} \tensor R}
      R
   \]
   induced by \eqref{disp:perf_pairing} identifies $\F_i^\perp \subset \Lambda_{-i} \tensor_{\O_{F_0}} R$ with $\F_{-i}$.
\item\label{it:kottwitz_cond}
   The element $\pi \tensor 1 \in \O_F \tensor_{\O_{F_0}} R$ acts on the $\O_F \tensor_{\O_{F_0}} R$-module $\F_i$ as an $R$-linear endomorphism with characteristic polynomial
   \[
      \det(T\cdot \id - \pi \tensor 1 \mid \F_i) = (T - \pi)^s (T+ \pi)^{r} \in R[T].
   \]
\end{enumerate}

The functor $M_I^\naive$ is plainly represented by a closed subscheme, which
we again denote $M_I^\naive$, of a finite product of Grassmannians over
$\Spec \O_F$.  The generic fiber of $M_I^\naive$ can be identified with the Grassmannian of $s$-planes in an $n$-dimensional vector space; see \cite{paprap09}*{\s1.5.3}.


\subsection{Wedge condition}\label{ss:wedge_cond}
As observed by Pappas \cite{pap00}, the naive local model $M_I^\naive$ often fails to be flat over $\O_F$.  As a first step towards correcting for non-flatness, Pappas proposed the addition of a new condition, the \emph{wedge condition}, to the moduli problem defining $M_I^\naive$:  this is the condition that for a given $R$-point $(\F_i)_i$ of $M_I^\naive$,
\begin{enumerate}
\setcounter{enumi}{4}
\renewcommand{\theenumi}{LM\arabic{enumi}}
\item
   for all $i \in n\ZZ \pm I$,
   \[
      \sideset{}{_R^{s+1}}{\bigwedge} (\pi\tensor 1 + 1 \tensor \pi \mid \F_i) = 0
      \quad\text{and}\quad
      \sideset{}{_R^{r+1}}{\bigwedge} (\pi\tensor 1 - 1 \tensor \pi \mid \F_i) = 0.
   \]
\end{enumerate}
We denote by $M_I^\wedge$ the subfunctor of $M_I^\naive$ of points that satisfy the wedge condition, and we call it the \emph{wedge local model}.  Plainly $M_I^\wedge$ is a closed subscheme of $M_I^\naive$.  As noted in \cite{paprap09}*{\s1.5.6}, the generic fibers of $M_I^\wedge$ and $M_I^\naive$ coincide.

\subsection{Spin condition}\label{ss:spin_cond}
Although the wedge local model turns out to be flat in some cases in which the naive local model is not, Pappas and Rapoport have observed that it also fails to be flat in general.  As a further --- and, conjecturally, last --- step towards correcting for non-flatness, in \cite{paprap09}*{\s7} they proposed the addition of a further condition to the moduli problem, the \emph{spin condition}.  In this subsection we review their formulation of the spin condition; compare also with \cite{sm09a}*{\s2.3}.  For sake of brevity, we shall recall only the bare minimum of linear algebra we need.

Regarding $V$ as a $2n$-dimensional vector space over $F_0$, consider the ordered $F_0$-basis
\[
   -\pi^{-1}e_1, \dotsc, -\pi^{-1}e_m, e_{m+1}, \dotsc, e_{n}, e_1,\dotsc, e_{m}, \pi e_{m+1}, \dotsc, \pi e_{n},
\]
which we denote by $f_1',\dotsc$, $f_{2n}'$.  Extending scalars to $F$, we get the ordered $F$-basis $f_1' \tensor 1, \dotsc$, $f_{2n}' \tensor 1$ for $V \tensor_{F_0} F$.  We then define a new basis $f_1,\dotsc,$ $f_{2n}$ for $V \tensor_{F_0} F$ by taking
\[
   f_i := f_i'\tensor 1
   \quad \text{for} \quad
   i \neq m+1,\ n+m+1
\]
and by replacing $f_{m+1}' \tensor 1 = e_{m+1} \tensor 1$ and $f_{n+m+1}' \tensor 1 = \pi e_{m+1} \tensor 1$ with
\[
   f_{m+1} := e_{m+1} \tensor 1 - \pi e_{m+1} \tensor \pi^{-1}
   \quad\text{and}\quad
   f_{n+m+1} := \frac{e_{m+1} \tensor 1 + \pi e_{m+1} \tensor \pi^{-1}}2.
\]
Next recall from \eqref{disp:pairings} the $F_0$-bilinear symmetric form $($~,~$)$ on $V$, and let us continue to write $($~,~$)$ for its base change to $V \tensor_{F_0} F$.  Then the basis $f_1,\dotsc,$ $f_{2n}$ is \emph{split} for  $($~,~$)$, that is, we have $(f_i,f_j) = \delta_{i,2n+1-j}$ for all $i$ and $j$.

We now use the split ordered basis $f_1,\dotsc,$ $f_{2n}$ to define an operator $a$ on $\bigwedge_F^{n} (V \tensor_{F_0} F)$.  For a subset $E \subset \{1,\dotsc,2n\}$ of cardinality $n$, let
\begin{equation}\label{disp:f_E}
   f_E := f_{i_1}\wedge\dotsb\wedge f_{i_{n}} \in \sideset{}{_F^{n}}{\bigwedge} (V \tensor_{F_0} F),
\end{equation}
where $E = \{i_1,\dotsc,i_{n}\}$ with $i_1 < \dotsb < i_{n}$.  Given such $E$, we also let 
\[
   E^\perp := (2n+1-E)^c = 2n+1-E^c,
\]
where the set complements are taken in $\{1,\dotsc,2n\}$.  Then $E^\perp$ consists of the elements $i'\in\{1,\dotsc,2n\}$ such that $(f_i,f_{i'}) = 0$ for all $i\in E$.  We now define $a$ by defining it on the basis elements $f_E$ of $\bigwedge_F^{n} (V \tensor_{F_0} F)$ for varying $E$:
\[
   a(f_E) := \sgn(\sigma_E)f_{E^\perp},
\]
where $\sigma_E$ is the permutation on $\{1,\dotsc,2n\}$ sending $\{1,\dotsc,n\}$ to the elements of $E$ in increasing order, and sending $\{n+1,\dotsc,2n\}$ to the elements of $E^c$ in increasing order.

\begin{rk}\label{rk:a_sign_difference}
Our operator $a$ agrees \emph{only up to sign} with the analogous operators denoted $a_{f_1\wedge \dotsb \wedge f_{2n}}$ in \cite{paprap09}*{display 7.6} and $a$ in \cite{sm09a}*{\s2.3}.  Indeed, these latter operators send
\[
   f_E \mapsto \sgn(\sigma'_E) f_{E^\perp},
\]
where $\sigma_E'$ is the permutation on $\{1,\dotsc,2n\}$ sending $\{1,\dotsc,n\}$ to the elements of $2n+1-E$ in decreasing order, and sending $\{n+1,\dotsc,2n\}$ to the elements of $E^\perp$ in increasing order.  We have
\[
   \sigma_E' = \rho \circ \sigma_E \circ \tau,
\]
where $\rho$ sends $i \mapsto 2n+1-i$, and $\tau$ fixes $\{1,\dotsc,n\}$ and sends $\{n+1,\dotsc,2n\}$ to itself in decreasing order.  Hence
\[
   \sgn(\sigma_E') = (-1)^n \cdot \sgn(\sigma_E) \cdot (-1)^m = (-1)^{m+1}\sgn(\sigma_E).
\]
\end{rk}

Returning to the main discussion, it follows easily from the definition of $\sigma_E$, or directly from \cite{paprap09}*{7.1} and the preceding remark, that $\sgn(\sigma_E) = \sgn(\sigma_{E^\perp})$.  Hence $a^2 = \id_{\bigwedge^n (V \tensor_{F_0} F)}$.  Hence $\bigwedge_F^n (V \tensor_{F_0} F)$ decomposes as
\[
   \sideset{}{_F^n}{\bigwedge} (V \tensor_{F_0} F) 
      = \Bigl(\sideset{}{_F^n}{\bigwedge} (V \tensor_{F_0} F)\Bigr)_{1} 
            \oplus \Bigl(\sideset{}{_F^n}{\bigwedge} (V \tensor_{F_0} F)\Bigr)_{-1},
\]
where
\[
   \Bigl(\sideset{}{_F^n}{\bigwedge} (V \tensor_{F_0} F)\Bigr)_{\pm 1} 
     := \spn_F\bigl\{f_E \pm \sgn(\sigma_E)f_{E^\perp}\bigr\}_E
\]
is the $\pm 1$-eigenspace for $a$; here $E$ ranges through the subsets of $\{1,\dotsc,2n\}$ of cardinality $n$ in the last display.

Now consider the $\O_F$-lattice $\Lambda_i \subset V$ for some $i$.
Then $\Lambda_i \tensor_{\O_{F_0}} \O_F$ is naturally an $\O_F$-lattice in $V \tensor_{F_0} F$, and $\bigwedge^n_{\O_F} (\Lambda_i \tensor_{\O_{F_0}} \O_F)$ is naturally an $\O_F$-lattice in $\bigwedge^n_F (V \tensor_{F_0} F)$.  We set
\[
   \Bigl(\sideset{}{_{\O_F}^n}{\bigwedge} (\Lambda_i \tensor_{\O_{F_0}} \O_F)\Bigr)_{\pm 1}
      := \Bigl(\sideset{}{_{\O_F}^n}{\bigwedge} (\Lambda_i \tensor_{\O_{F_0}} \O_F) \Bigr)
            \cap \Bigl(\sideset{}{_F^n}{\bigwedge} (V \tensor_{F_0} F)\Bigr)_{\pm 1}.
\]

We are finally ready to state the spin condition.  Recall our partition $n = s+r$, and note that, given an $R$-point $(\F_i)_i$ of $M_I^\naive$, the $R$-module $\bigwedge_{R}^n \F_i$ is naturally contained in $\bigwedge^n_{R} (\Lambda_i \tensor_{\O_{F_0}} R)$ for all $i$.  The \emph{spin condition} is that
\begin{enumerate}
\renewcommand{\theenumi}{LM\arabic{enumi}}
\setcounter{enumi}{5}
\item
   for all $i \in n\ZZ \pm I$, $\bigwedge_{R}^n \F_i$ is contained in
   \[
      \im\biggl[
         \Bigl(\sideset{}{_{\O_F}^n}{\bigwedge} 
            (\Lambda_i \tensor_{\O_{F_0}} \O_F)\Bigr)_{(-1)^s}\tensor_{\O_F} R 
         \to \sideset{}{_{R}^n}{\bigwedge} (\Lambda_i \tensor_{\O_{F_0}} R)
      \biggr].
   \]
\end{enumerate}
For a fixed index $i$, we say that $\F_i$ satisfies the spin condition if $\bigwedge_R^n \F_i$ is contained in the displayed image.
We denote by $M_I^\spin$ the subfunctor of $M_I^\wedge$ of points satisfying the spin condition, and we call it the \emph{spin local model}.  Plainly $M_I^\spin$ is a closed subscheme of $M_I^\wedge$.  As noted in \cite{paprap09}*{\s7.2.2}, the generic fiber of $M_I^\spin$ agrees with the common generic fiber of $M_I^\naive$ and $M_I^\wedge$.

\begin{rk}
The statement of the spin condition in \cite{paprap09}*{\s7.2.1} actually contains a sign error which traces to the sign discrepancy observed in \eqref{rk:a_sign_difference}.  Indeed, the element $f_1\wedge\dotsb\wedge f_n$ lies in the $(-1)^{m+1}$-eigenspace for the operator $a_{f_1\wedge\dotsb\wedge f_{2n}}$ of \cite{paprap09}.  Thus the argument of \cite{paprap09}*{\s7.2.2} shows that the sign of $(-1)^s$ in the statement of the spin condition in \cite{paprap09} should be replaced with $(-1)^{s+m+1}$.  For us, since $f_1\wedge\dotsb\wedge f_n$ lies in the $+1$-eigenspace of our operator $a$, the same argument shows that we get a sign of $(-1)^s$.
\end{rk}

\subsection{Lattice chain automorphisms}\label{ss:latt_chain_auts}
Regarding $\Lambda_I$ as a lattice chain over $\O_{F_0}$, the perfect pairings $\langle$~,~$\rangle$ of \eqref{disp:perf_pairing} give a \emph{polarization} of $\Lambda_I$ in the sense of \cite{rapzink96}*{3.14} (with $B = F$, ${b}^* = \ol{b}$ in the notation of \cite{rapzink96}).  Consider the $\O_{F_0}$-group scheme $\ul\Aut(\Lambda_I)$, the scheme of automorphisms of the lattice chain $\Lambda_I$ that preserve the pairings $\langle$~,~$\rangle$ for variable $\Lambda_i \in \Lambda_I$ up to common unit scalar.  Then $\ul\Aut(\Lambda_I)$ is smooth and affine over $\O_{F_0}$; see \cite{pap00}*{2.2}, which in turn relies on \cite{rapzink96}*{3.16}.  Let \A denote the base change of $\ul\Aut(\Lambda_I)$ to $\O_F$.  Then \A acts naturally on $M_I^\naive$, and it is easy to see that this action preserves the closed subschemes $M_I^\wedge$, $M_I^\spin$, and $M_I^\loc$, where we recall that $M_I^\loc$ denotes the scheme-theoretic closure of the generic fiber in $M_I^\naive$.  We shall return to this point in \s\ref{ss:embedding}.

\section{Unitary similitude group}\label{s:GU}
In this section we review a number of basic group-theoretic matters from \cite{paprap09}; these will become relevant in the next section when we begin to consider the affine flag variety.  We switch to working with respect to the auxiliary field extension $K/K_0$.  We write $i^* := n+1-i$ for $i \in \{1,\dotsc,n\}$.

\subsection{Unitary similitudes}
Let $h$ denote the Hermitian form on $K^{n}$ whose matrix with respect to the standard ordered basis is \eqref{disp:antidiag_1}.  We denote by $G := GU_{n} := GU(h)$ the algebraic group over $K_0$ of \emph{unitary similitudes} of $h$: for any $K_0$-algebra $R$, $G(R)$ is the group of elements $g\in GL_{n}(K \tensor_{K_0} R)$ satisfying $h_R(gx,gy) = c(g)h_R(x,y)$ for some $c(g) \in R^\times$ and all $x$, $y\in (K \tensor_{K_0} R)^{n}$, where $h_R$ is the induced form on $(K \tensor_{K_0} R)^{n}$.  As the form $h$
is nonzero, the scalar $c(g)$ is uniquely determined, and $c$ defines an
exact sequence of $K_0$-groups
\[
   1 \to U_n \to G \xra c \GG_m \to 1
\]
with evident kernel $U_n:= U(h)$ the unitary group of $h$.

After base change to $K$, we get the standard identification
\begin{equation}\label{disp:G_K=GL_ntimesG_m}
   G_K \xra[\sim]{(\varphi,c)} GL_{n,K} \times \GG_{m,K},
\end{equation}
where $\varphi\colon G_K \to GL_{n,K}$ is the map $x\tensor y \mapsto xy$ on matrix entries.

\subsection{Tori}\label{ss:tori}
We denote by $S$ the standard diagonal maximal split torus in $G$:  on $R$-points,
\[
   S(R) = \bigl\{\, \diag(a_1,\dotsc,a_{n}) \in GL_{n}(R) \bigm| 
          a_1 a_{n} = \dotsb = a_m a_{m+2} = a_{m+1}^2 \,\bigr\}.
\]
The centralizer $T$ of $S$ is the standard maximal torus of all diagonal matrices in $G$,
\[
   T(R) = \biggl\{\, \diag(a_1,\dotsc,a_n) \in GL_n(K\tensor_{K_0} R) \biggm|
      \begin{varwidth}{\textwidth}
         \centering
         $a_1 \ol a_{n} = \dotsb = a_m \ol a_{m+2}$\\ 
         $= a_{m+1} \ol a_{m+1}$
      \end{varwidth} \,\biggr\}.
\]
The isomorphism \eqref{disp:G_K=GL_ntimesG_m} identifies $T_K$ with the split torus $D \times \GG_{m,K}$, where $D$ denotes the standard diagonal maximal torus in $GL_{n,K}$.  The standard identification $X_*(D \times \GG_{m,K}) \ciso \ZZ^n \times \ZZ$ then identifies the inclusion $X_*(S) \subset X_*(T)$ with
\begin{equation}\label{disp:X_*S_in_X_*T}
   \bigl\{\, (x_1,\dotsc,x_n,y) \bigm| x_1 + x_n = \dotsb = x_m + x_{m+2} = 2x_{m+1} = y \,\bigr\}
   \subset \ZZ^n \times \ZZ;
\end{equation}
note that this is not the description of $X_*(S)$ given in \cite{paprap09}*{\s2.4.2}, which appears to contain an error.

For later use, it is convenient to introduce here the cocharacter $\mu_{r,s} \in X_*(T)$ given in terms of our above identifications as
\begin{equation}\label{disp:mu_s}
   \mu_{r,s} := \bigl(1^{(s)},0^{(r)},1\bigr) \in \ZZ^n \times \ZZ.
\end{equation}

We write \aaa for the standard apartment $X_*(S) \tensor_\ZZ \RR$, and we regard it as a sub\-space of $\RR^n \times \RR$ via \eqref{disp:X_*S_in_X_*T}.

\subsection{Affine roots}
In terms of the identification \eqref{disp:X_*S_in_X_*T}, the \emph{relative roots} of $S$ in $G$ are shown in \cite{paprap09}*{\s2.4.2} (modulo the description of $X_*(S)$ there) to consist of the maps $\{\pm\alpha_{i,j}\}_{i < j < i^*} \cup \{\pm\alpha_{i,i^*}\}_{i < m+1}$ on $X_*(S)$, where
\[
   \alpha_{i,j} \colon (x_1,\dotsc,x_n,y) \mapsto
   x_i - x_j.
\]
The \emph{affine roots} are then shown to consist of the maps
\[
   \pm\alpha_{i,j} + \tfrac 1 2 \ZZ
   \quad\text{for}\quad
   i < j <i^*
   \quad\text{and}\quad
   \pm \alpha_{i,i^*} + \tfrac 1 2 + \ZZ
   \quad\text{for}\quad
   i < m+1.
\]
Thus the affine root hyperplanes consist of the zero loci of the affine functions
\begin{equation}\label{disp:C_m_roots_unscaled}
   \begin{gathered}
      \pm2\alpha_{i,j} + \ZZ
      \quad\text{for}\quad
      i < j <i^*,\ j \neq m+1;\quad\text{and}\\
      \pm 2\alpha_{i,i^*} + \ZZ
      \quad\text{for}\quad
      i < m+1.
   \end{gathered}
\end{equation}
These last may be regarded as the affine roots attached to a root system of type $C_m$.

\subsection{Iwahori-Weyl group}
The \emph{Iwahori-Weyl group} of $G$ with respect to the maximal split torus $S$ is the group
\[
   \wt W_G := N(K_0)/T(K_0)_1,
\]
where $N$ is the normalizer of $T$ in $G$ and $T(K_0)_1$ is the kernel of the Kottwitz homomorphism $T(K_0) \surj X_*(T)_{\Gal(\ol K_0 / K_0)} = X_*(T)_\Gamma$.  The evident exact sequence
\[
   1 \to T(K_0)/T(K_0)_1 \to \wt W_G \to N(K_0)/T(K_0) \to 1
\]
splits, so that $\wt W_G$ is expressible as a semidirect product
\[
   \wt W_G \iso X_*(T)_\Gamma \rtimes W_G,
\]
where $W_G:= N(K_0)/T(K_0)$ is the relative Weyl group of $S$ in $G$.

Concretely, the permutation matrices in $G(K_0)$ map isomorphically onto $W_G$, and in this way we identify $W_G$ with $S_n^*$.  On the other hand, the nontrivial element in $\Gamma$ acts on $X_*(T)$ by sending
\[
   (x_1,\dotsc,x_n,y) \mapsto (y-x_n,\dotsc,y-x_1,y).
\]
Hence the surjective map
\[
   \xymatrix@R=0ex{
      X_*(T) \ar@{->>}[r] & \ZZ^m\times \ZZ\\
      (x_1,\dotsc,x_n,y) \ar@{|->}[r] & (x_1 - x_n, x_2 - x_{n-1},\dotsc,x_m - x_{m+2},y)
   }
\]
identifies the coinvariants $X_*(T)_\Gamma$ with $\ZZ^m\times\ZZ$.  Moreover, it is now clear from our various identifications that the composite $X_*(S) \to X_*(T) \to X_*(T)_\Gamma$ identifies $X_*(S)$ with $2X_*(T)_\Gamma$.  Hence we may just as well identify $X_*(T)_\Gamma$ with $\frac 1 2 X_*(S)$ inside $X_*(T)\tensor\QQ \ciso \QQ^n \times \QQ$.  Of course, in this way $W_G$ acts on $X_*(T)_\Gamma$ via its action on all of $\QQ^n \times \QQ$, with the natural permutation action of $S_n^*$ on the first $n$ factors and the trivial action on the last factor.

To better facilitate working directly with $X_*(T)_\Gamma$, it is convenient to now change coordinates on \aaa.  Starting from the coordinates defined in \s\ref{ss:tori}, let us multiply by $2$ and project $(x_1,\dotsc,x_n,y) \mapsto (x_1,\dotsc,x_n)$, so that we now identify
\begin{equation}\label{disp:X_*(T)_Gamma_identification}
   X_*(T)_\Gamma \iso \bigl\{\, (x_1,\dotsc,x_{n}) \in \ZZ^{n} \bigm| x_1 + x_{n} = \dotsb = x_m + x_{m+2} = 2 x_{m+1} \,\bigr\},
\end{equation}
and we replace \eqref{disp:X_*S_in_X_*T} with the identification
\begin{equation}\label{disp:X_*(S)_identification}
   X_*(S) \iso \bigl\{\, (x_1,\dotsc,x_{n}) \in 2\ZZ^{n} \bigm| x_1 + x_n = \dotsb = x_m + x_{m+2} = 2x_{m+1} \,\bigr\}.
\end{equation}
Under the identification \eqref{disp:X_*(T)_Gamma_identification}, the Kottwitz map $T(K_0) \surj X_*(T)_\Gamma$ has the simple form
\[
   \diag(a_1,\dotsc,a_n) \mapsto (\ord_u a_1,\dotsc,\ord_u a_n).
\]
With respect to our new coordinates, the affine functions \eqref{disp:C_m_roots_unscaled} correspond to the functions
\begin{equation}\label{disp:C_m_roots}
   \begin{gathered}
      \pm\alpha_{i,j} + \ZZ
      \quad\text{for}\quad
      i < j <i^*,\ j \neq m+1;\quad\text{and}\\
      \pm \alpha_{i,i^*} + \ZZ
      \quad\text{for}\quad
      i < m+1.
   \end{gathered}
\end{equation}

Tracing through our various identifications, we also note that with respect to our new coordinates, the image in $X_*(T)_\Gamma$ of the cocharacter $\mu_{r,s}$ \eqref{disp:mu_s} identifies with
\begin{equation}\label{disp:ol_mu_s}
   \bigl(2^{(s)},1^{(n-2s)},0^{(s)}\bigr) \in \ZZ^n.
\end{equation}

\subsection{Bruhat order}\label{ss:bo}
We now briefly review the Bruhat order on $\wt W_G$.  The reflections in the apartment \aaa across the affine root hyperplanes --- or what are the same, across the zero loci of the functions \eqref{disp:C_m_roots} --- are naturally elements in $\wt W_G$; the \emph{affine Weyl group $W_{\aff,G}$} is the subgroup of $\wt W_G$ generated by them.  The affine Weyl group acts simply transitively on the set of alcoves in \aaa, and the choice of a base alcove $A$ presents $\wt W_G$ as a semidirect product $W_{\aff,G} \rtimes \Omega_A$, where $\Omega_A$ is the stabilizer of $A$ in $\wt W_G$.  The reflections across the walls of $A$ generate $W_{\aff,G}$ as a Coxeter group, so that $W_{\aff,G}$ is endowed with a Bruhat order $\leq$.  The Bruhat order then extends to $\wt W_G$ in the usual way: for $x\omega$, $x'\omega' \in \wt W_G$ with $x$, $x'\in W_{\aff,G}$ and $\omega$, $\omega'\in\Omega_A$, we have $x\omega \leq x'\omega'$ exactly when $\omega = \omega'$ and $x\leq x'$ in $W_{\aff,G}$.

\subsection{Relation to the symplectic group}\label{ss:GSp_2m}
Let
\[
   X_* := \bigl\{\, (x_1,\dotsc,x_{2m}) \in \ZZ^{2m} \bigm| x_1 + x_{2m} = \dotsb = x_m + x_{m+1}\,\bigr\},
\]
and consider the Iwahori-Weyl group $\wt W_{GSp_{2m}} := X_* \rtimes S_{2m}^*$ of the split symplectic similitude group with respect to its diagonal maximal torus.  The identification \eqref{disp:X_*(T)_Gamma_identification} makes plain that the map on cocharacters
\[
   (x_1,\dotsc,x_n) \mapsto (x_1,\dotsc,x_m,x_{m+2},\dotsc,x_n)
\]
induces an embedding $\wt W_G \inj \wt W_{GSp_{2m}}$ as a subgroup of index $2$.  Moreover, \eqref{disp:C_m_roots} makes plain that in this way, the affine root hyperplane structure on \aaa identifies with the affine root hyperplane structure in the standard apartment for $GSp_{2m}$.  In particular, the Bruhat order on $\wt W_G$ is inherited from the Bruhat order on $\wt W_{GSp_{2m}}$; we have  
\[
   W_{\aff,G} = Q^\vee_G \rtimes W_G \subset X_*(T)_\Gamma \rtimes W_G,
\]
where $Q^\vee_G$ identifies with
\[
   \bigl\{\, (x_1,\dotsc,x_n) \in \ZZ^n \bigm| x_1 + x_{n} = \dotsb = x_m + x_{m+2} = x_{m+1} = 0 \,\bigr\};
\]
and the stabilizer group $\Omega_A$ (for any alcove $A$) maps isomorphically to the quotient
\[
   \wt W_G/W_{\aff,G} \ciso X_*(T)_\Gamma / Q^\vee_G \iso \ZZ.
\]

\subsection{Parahoric subgroups}\label{ss:parahoric}
We next recall the description of the parahoric subgroups of $G(K_0)$ from \cite{paprap09}.  In analogy with \s\ref{ss:lattices}, for $i = nb+c$ with $0 \leq c < n$, we define the $\O_{K}$-lattice
\[
   \lambda_i := \sum_{j=1}^c u^{-b-1}\O_K e_j + \sum_{j=c+1}^{n} u^{-b}\O_K e_j \subset K^n,
\]
where now $e_1,\dotsc,e_n$ denotes the standard ordered basis in $K^n$.  For any nonempty subset $I \subset \{0,\dotsc,m\}$, we write $\lambda_I$ for the chain consisting of all lattices $\lambda_i$ for $i \in n\ZZ \pm I$, and we define
\begin{align*}
   P_I &:= \bigl\{\, g\in G(K_0) \bigm| g\lambda_i = \lambda_i \text{ for all } i \in n\ZZ \pm I \,\bigr\}\\
       &\phantom{:}= \bigl\{\, g\in G(K_0) \bigm| g\lambda_i = \lambda_i \text{ for all } i \in I \,\bigr\}.
\end{align*}
We have the following.

\begin{prop}[\cite{paprap09}*{\s1.2.3(a)}]\label{st:parahoric_description}
$P_I$ is a parahoric subgroup of $G(K_0)$, and every parahoric subgroup of $G(K_0)$ is conjugate to $P_I$ for a unique nonempty $I \subset \{0,\dotsc,m\}$.  The sets $I = \{0\}$ and $I = \{m\}$ correspond to the special maximal parahoric subgroups.\qed
\end{prop}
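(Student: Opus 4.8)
The plan is to identify $P_I$ explicitly as a stabilizer of the partial lattice chain $\lambda_I$ and then to read off the parahoric property and the parametrization from the combinatorics of the Iwahori--Weyl group. First I would note that the full stabilizer $\wt P_I := \Stab_{G(K_0)}(\lambda_I)$ is an open compact subgroup, since it preserves the $\O_{K_0}$-lattice $\lambda_I$ (viewed over $\O_{K_0}$) in $K^n$ regarded as a $K_0$-vector space; here one uses the self-duality of the chain and the identification of $G$ with unitary similitudes to check that the similitude factor is forced to lie in $\O_{K_0}^\times$, so that $\wt P_I$ genuinely fixes each $\lambda_i$ rather than merely permuting the chain up to scaling. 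The Bruhat--Tits building interpretation then says $\wt P_I$ is the stabilizer of the face $\mathfrak f_I$ of the base alcove cut out by the vanishing of the affine functions \eqref{disp:C_m_roots} indexed by the complement of $I$ — this is exactly the translation of the lattice-chain conditions $g\lambda_i = \lambda_i$, $i \in I$, into conditions in the apartment \aaa via the coordinates \eqref{disp:X_*(T)_Gamma_identification}. The parahoric subgroup $P_I$ is by definition the kernel of the Kottwitz homomorphism $\kappa_G$ restricted to $\wt P_I$, equivalently $P_I = \wt P_I \cap \ker\kappa_G$; since the Kottwitz map on $G(K_0)$ factors through $\pi_1(G)_\Gamma$ and is detected by the similitude factor (via $c\colon G \to \GG_m$) together with the $GL_n$-determinant, one checks that $\wt P_I$ already lands in the finite-index subgroup $\ker\kappa_G$ up to a bounded discrepancy; in any case $P_I$ is of finite index in $\wt P_I$, hence is itself open compact, and is parahoric essentially by construction. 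The cleanest route here is simply to cite the building-theoretic fact (Bruhat--Tits, or Haines--Rapoport's appendix) that for a connected reductive group the connected stabilizer of a facet, intersected with $\ker\kappa_G$, is a parahoric, together with the explicit identification of $\lambda_I$-stabilizers with facet-stabilizers.

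For the conjugacy classification, I would argue as follows. Parahoric subgroups up to conjugacy correspond to $\wt W_G$-orbits — equivalently, since $\wt W_G = W_{\aff,G} \rtimes \Omega_A$ and $\Omega_A$ acts on the base alcove, to $\Omega_A$-orbits — of facets of the base alcove $A$. By \s\ref{ss:GSp_2m} the affine root hyperplane arrangement on \aaa is that of type $C_m$, so the base alcove has $m+1$ walls, and its facets are naturally indexed by the proper subsets of the vertex set $\{0,1,\dotsc,m\}$ (a facet being the intersection of the walls it lies on); dually, nonempty subsets $I \subset \{0,\dotsc,m\}$ index facets, with $I$ recording which vertices the facet contains — this matches the description of $P_I$ as the stabilizer of $\lambda_I$, where larger $I$ means fixing more lattices, i.e.\ a smaller facet. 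It then remains to see that $\Omega_A \ciso \ZZ$ acts trivially on the set of facets of $A$. For $C_m$ the stabilizer of the alcove in the extended affine Weyl group of the simply connected cover is trivial, and the extra $\ZZ$ of $\Omega_A$ in our case comes from the central $\GG_m$ (the similitude factor) via $X_*(T)_\Gamma/Q^\vee_G \iso \ZZ$ in \s\ref{ss:GSp_2m}; since the similitude direction is fixed by $W_G$ and pairs trivially with all the affine roots \eqref{disp:C_m_roots}, its action on \aaa is by a translation lying in the direction orthogonal to all walls — but \aaa modulo that direction is exactly the apartment where the $C_m$ arrangement lives, so $\Omega_A$ permutes walls of $A$ trivially and hence fixes every facet. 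Therefore distinct nonempty $I$ give non-conjugate $P_I$, and every parahoric is conjugate to exactly one $P_I$. Finally, $I = \{0\}$ and $I = \{m\}$ are the two singleton sets, corresponding to the two vertices of $A$ that are special vertices of the $C_m$ arrangement (the extreme vertices), so $P_{\{0\}}$ and $P_{\{m\}}$ are the special maximal parahorics.

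The main obstacle I anticipate is the bookkeeping in the first step: carefully matching the lattice-chain stabilizer $\wt P_I$ with the \emph{connected} facet-stabilizer and then pinning down the relationship between $\wt P_I$, its identity component, and $\ker\kappa_G$ — i.e.\ verifying that the Kottwitz homomorphism does not see any extra component so that $P_I := \wt P_I \cap \ker\kappa_G$ really is the parahoric and not a strictly larger or disconnected group. This is where the ramification of $F/F_0$ and the precise structure of $\pi_1(GU_n)_\Gamma$ enter, and it is exactly the content being imported from \cite{paprap09}*{\s1.2.3(a)}; in a self-contained write-up one would spell out the Kottwitz map on $T(K_0)$ (already given in \s\ref{ss:parahoric} as $\diag(a_i) \mapsto (\ord_u a_i)$) and check compatibility with the similitude character. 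Everything else — the $C_m$ combinatorics of the base alcove, the triviality of the $\Omega_A$-action on facets, and the identification of $\{0\}$ and $\{m\}$ with the special vertices — is routine once the apartment has been set up as in \s\ref{ss:GSp_2m}.
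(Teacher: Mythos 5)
The paper does not prove this proposition: it is stated without proof, and the bracketed reference makes clear that the entire content is quoted from \cite{paprap09}*{\S 1.2.3(a)}. There is therefore no in-paper argument to compare yours against; the relevant question is whether your proposal stands on its own.

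Your outline is structurally sound, but its central step is asserted where it is in fact the nontrivial content. In the paper, $P_I$ is \emph{defined} as the naive fixer $\{g \in G(K_0) : g\lambda_i = \lambda_i \text{ for all } i \in n\ZZ\pm I\}$, not as the intersection of a facet stabilizer with the kernel of the Kottwitz homomorphism. When you write that ``$P_I$ is by definition the kernel of the Kottwitz homomorphism restricted to $\wt P_I$,'' you are taking as given exactly what the first assertion of the proposition requires: that the naive fixer of the self-dual chain already lies in the kernel of the Kottwitz homomorphism and therefore coincides with the Bruhat--Tits parahoric of the corresponding facet. You correctly flag this at the end as the ``main obstacle'' and as the content being imported from \cite{paprap09}; given that, your proposal---like the paper---cites rather than proves the one genuinely hard step, and a self-contained version would have to compute the Kottwitz homomorphism on $G(K_0)$ from the explicit formula $\diag(a_1,\dotsc,a_n)\mapsto(\ord_u a_1,\dotsc,\ord_u a_n)$ on $T(K_0)$ together with the similitude character, and then verify it is trivial on the naive fixer. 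The combinatorial half---facets of the base alcove indexed by nonempty $I$, conjugacy classes of parahorics given by $\Omega_A$-orbits of facets, $\{0\}$ and $\{m\}$ special---is correct in outline, and your conclusion that $\Omega_A$ acts trivially on the facets is right, though the justification should be tightened: $\Omega_A$ is generated by the central translation $t_{\mathbf 1}$, which acts trivially on the local Dynkin diagram, and this is exactly where $n$ odd enters, since $\wt W_G$ sits inside $\wt W_{GSp_{2m}}$ with index $2$ and the generator of $\Omega_A$ for $GSp_{2m}$ itself acts on the type $C_m$ alcove by the nontrivial diagram flip $i\leftrightarrow m-i$, which passing to the index-$2$ subgroup is precisely what excludes.
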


\subsection{Base alcove}\label{ss:base_alc}
From now on, we take as our base alcove $A$ the unique alcove fixed by the Iwahori subgroup $P_{\{0,\dotsc,m\}}$.  In terms of our coordinates \eqref{disp:X_*(S)_identification} on $X_*(S)$, we have
\[
   A = \biggl\{ \,(x_1,\dotsc,x_n) \in \RR^n \biggm|
            \begin{varwidth}{\textwidth}
               \centering
                $x_1 + x_n = \dotsb = x_m + x_{m+2} = 2x_{m+1}$ and\\
                $x_n-1 < x_1 < x_2 < \dotsb < x_m < x_{m+2}$ 
            \end{varwidth}
            \,\biggr\}
            \subset \aaa.
\]
Of course, the choice of $A$ determines a Bruhat order on $\wt W_G$, as discussed in \s\ref{ss:bo}.

\subsection{Coset and double coset variants}\label{ss:coset_variants}
To consider local models for general parahoric, not just Iwahori, level structure, it is necessary to consider certain double coset variants of $\wt W_G$, which we now review from the paper of Kottwitz and Rapoport \cite{kottrap00}*{\s8}.  For nonempty $I \subset \{0,\dotsc,m\}$, let
\[
   W_{G,I} := \bigl( N(K_0) \cap P_I \bigr) \big/ T(K_0)_1
   \subset W_{\aff,G} \subset \wt W_G.
\]
For nonemtpy $I$, $J \subset \{0,\dotsc,m\}$, consider the set of double cosets $W_{G,I} \bs \wt W_G / W_{G,J}$; this inherits a Bruhat order from $\wt W_G$ in the following way.  Let $W_{G,I} \wt w W_{G,J}$ and $W_{G,I}\wt u W_{G,J}$ be double cosets, and let $\wt w_0$, $\wt u_0 \in \wt W_G$ be their respective unique elements of minimal length; see Bourbaki \cite{bourLGLA4-6}*{IV \s1 Ex.\ 3}.  Then $W_{G,I}\wt w W_{G,J} \leq W_{G,I}\wt u W_{G,J}$ in the Bruhat order exactly when $\wt w_0 \leq \wt u_0$ in $\wt W_G$.

\begin{rk}\label{rk:bo_double_cosets}
We recall from \cite{kottrap00}*{8.3} the following facts about the Bruhat order on $W_{G,I} \bs \wt W_G / W_{G,J}$: if $\wt w \leq \wt u$ in $\wt W_G$, then $W_{G,I}\wt w W_{G,J} \leq W_{G,I}\wt u W_{G,J}$; and if $W_{G,I}\wt w W_{G,J} \leq W_{G,I}\wt u W_{G,J}$ and $\wt w_0$ is the element of minimal length in $W_{G,I}\wt w W_{G,J}$, then $\wt w_0 \leq \wt u$.
\end{rk}

\section{Affine flag variety}\label{s:afv}
From now on we take $K_0 = k((t))$, $\O_{K_0} = k[[t]]$, $K = k((u))$, and $\O_K = k[[u]]$.  In this section we review the affine flag variety attached to  a parahoric subgroup of $G$, and the embedding of the special fiber of the naive local model into it, from \cite{paprap09}*{\s3}.  Let $I \subset \{0,\dotsc,m\}$ be a nonempty subset.

\subsection{Affine flag variety}
Let $P$ be a parahoric subgroup of $G(K_0)$.  Then Bruhat-Tits theory attaches to $P$ a smooth $\O_{K_0}$-group scheme whose generic fiber is identified with $G$, whose special fiber is connected, and whose group of $\O_{K_0}$-points is identified with $P$; abusing notation, we denote this group scheme again by $P$.  The \emph{affine flag variety $\F_P$ relative to $P$} is the fpqc quotient of functors on the category of $k$-algebras,
\[
   \F_P := LG/L^+P,
\]
where $LG$ is the \emph{loop group} $LG\colon R \mapsto G\bigl(R((t))\bigr)$ and $L^+P$ is the \emph{positive loop group}  $L^+P\colon R \mapsto G\bigl(R[[t]]\bigr)$.  See \cite{paprap08}.  The affine flag variety is an ind-$k$-scheme of ind-finite type.

\subsection{Lattice-theoretic description}\label{ss:lattice_theoretic}
In this subsection we give a slight variant of (and make a minor correction to) the description in \cite{paprap09}*{\s3.2} of the affine flag variety in terms of lattice chains in $K^n$.  

Let $R$ be a $k$-algebra.  Recall that an \emph{$R[[u]]$-lattice in $R((u))^{n}$} is an $R[[u]]$-submodule $L \subset R((u))^{n}$ which is free as an $R[[u]]$-module Zariski-locally on $\Spec R$, and such that the natural arrow $L \tensor_{R[[u]]} R((u)) \to R((u))^{n}$ is an isomorphism.  Borrowing notation from \eqref{disp:wh_Lambda}, given an $R[[u]]$-lattice $L$, we write $\wh L$ for the dual lattice
\[
   \wh L := \bigl\{\, x \in R((u))^{n} \bigm| h_{R((u))}(L,x) \subset R[[u]] \, \bigr\},
\]
where $h_{R((u))} := h \tensor_K R((u))$ is the induced form on $R((u))^{n}$. A collection $\{L_i\}_i$ of $R[[u]]$-lattices in $R((u))^{n}$ is a \emph{chain} if it is totally ordered under inclusion and all successive quotients are locally free $R$-modules (necessarily of finite rank).  A lattice chain is \emph{periodic} if $u L$ is in the chain for every lattice $L$ in the chain.  We write $\L\bigl(R((u))^n\bigr)$ for the category whose objects are the $R[[u]]$-lattices in $R((u))^n$ and whose morphisms are the natural inclusions of lattices.  Of course, any $R[[u]]$-lattice chain may be regarded as a full subcategory of $\L\bigl(R((u))^n\bigr)$.

We define $\F_I$ to be the functor on $k$-algebras that assigns to each $R$ the set of all functors $L\colon\lambda_I \to \L\bigl(R((u))^n\bigr)$ such that
\begin{enumerate}
\renewcommand{\theenumi}{C}
\item\label{it:afv_chain_cond}
   (chain) the image $L(\lambda_I)$ is a lattice chain in $R((u))^n$;
\renewcommand{\theenumi}{P}
\item\label{it:afv_periodicity_cond}
   (periodicity) $L(u\lambda_i) = uL(\lambda_i)$ for all $i \in n\ZZ \pm I$, so that the chain $L(\lambda_I)$ is periodic;
\renewcommand{\theenumi}{R}
\item\label{it:afv_rank_cond}
   (rank) $\dim_k \lambda_i/\lambda_j = \rank_R L(\lambda_i) / L(\lambda_j)$ for all $j  < i$; and
\renewcommand{\theenumi}{D}
\item\label{it:afv_dlty_cond}
   (duality) Zariski-locally on $\Spec R$, there exists $\alpha \in R((t))^\times \subset R((u))^\times$ such that $\wh{L(\lambda_i)} = \alpha L\bigl(\wh\lambda_i\bigr)$ for all $i \in n\ZZ \pm I$.
\end{enumerate}

If $L \in \F_I(R)$ globally admits a scalar $\alpha$ as in \eqref{it:afv_dlty_cond}, then $\alpha$ is well-defined modulo the group
\[
   \bigl\{\, a \in R((t))^\times \bigm| a L(\lambda_{-i}) = L(\lambda_{-i}) \,\bigr\}
   = R[[t]]^\times
\]
(independent of $i$).  It is then an easy exercise to check that the map specified locally by
\[
   L \mapsto \Bigl(\bigl( L(\lambda_i) \bigr)_{i \in I},\ \alpha \bmod R[[t]]^\times \Bigr)
\]
defines an isomorphism from the functor $\F_I$ as we've defined it to the functor $\F_I$ as defined in \cite{paprap09}*{\s3.2}, except that the functor in \cite{paprap09} should only require that $\alpha \bmod R[[t]]^\times$ be given Zariski-locally.  The loop group $LG$ acts on $\F_I$ via the natural representation of $G\bigl(R((t))\bigr)$ on $R((u))^n$, and it follows that the $LG$-equivariant map $LG \to \F_I$ specified by taking the tautological inclusion $\bigl(\lambda_I \inj \L(K^n)\bigr) \in \F_I(k)$ as basepoint defines an $LG$-equivariant isomorphism
\[
   \F_{P_I} \isoarrow \F_I.
\]
We shall always identify $\F_{P_I}$ and $\F_I$ in this way.

\subsection{Schubert cells and varieties}
Consider the parahoric subgroup scheme $P_I$ over $\O_{K_0}$ and its associated affine flag variety $\F_{P_I}$.  For $n \in N(K_0)$, the associated \emph{Schubert cell} is the reduced $k$-subscheme
\[
   L^+P_I \cdot n \subset \F_{P_I}.
\]
The Schubert cell depends only on the image $w$ in $W_{G,I} \bs \wt W_G / W_{G,I}$ of $n$, and we denote the Schubert cell by $C_w$.  By Haines and Rapoport \cite{hrap08}*{Prop.~8}, the inclusion $N(K_0) \subset G(K_0)$ induces a bijection
\[
   W_{G,I} \bs \wt W_G / W_{G,I} 
   \isoarrow P_I(\O_{K_0}) \bs G(K_0) / P_I(\O_{K_0}),
\]
so that the Schubert cells are indexed by precisely the elements of $W_{G,I} \bs \wt W_G / W_{G,I}$ and give a stratification of all of $\F_{P_I}$.  Note that in the special case $I = \{0,\dotsc,m\}$, $P_I$ is an Iwahori subgroup, the group $W_{G,I}$ is trivial, and the Schubert cells are indexed by $\wt W_G$ itself.

For $w\in W_{G,I} \bs \wt W_G / W_{G,I}$, the associated \emph{Schubert variety $S_w$} is the reduced closure of $C_w$ in $\F_{P_I}$.  The closure relations between Schubert cells are given by the Bruhat order:  for $w$, $w' \in W_{G,I} \bs \wt W_G / W_{G,I}$, we have $S_w \subset S_{w'}$ in $\F_{P_I}$ $\iff$ $w \leq w'$ in $W_{G,I} \bs \wt W_G / W_{G,I}$.

\subsection{Embedding the special fiber}\label{ss:embedding}
We conclude the section by recalling from \cite{paprap09}*{\s3.3} the embedding of the special fiber $M_{I,k}^\naive := M_I^\naive \tensor_{\O_K} k$ of the naive local model in the affine flag variety $\F_{P_I} \iso \F_I$.

The embedding makes use of the lattice-theoretic description of the affine flag variety from \s\ref{ss:lattice_theoretic}.  First note that the $\O_K$-lattice chain $\lambda_I$ admits a trivialization in obvious analogy with the trivialization of $\Lambda_I$ specified by \eqref{disp:latt_triv}, where $\lambda_i$ replaces $\Lambda_i$, $\O_K$ replaces $\O_F$, and $u$ replaces $\pi$.  Upon identifying $\O_K \tensor_{\O_{K_0}} k \isoarrow \O_F \tensor_{\O_{F_0}} k$ by sending the $k$-basis elements $1 \tensor 1 \mapsto 1 \tensor 1$ and $u \tensor 1 \mapsto \pi \tensor 1$, our trivializations then yield an identification of chains of $k$-vector spaces
\begin{equation}\label{disp:chain_isom}
   \lambda_i \tensor_{\O_{K_0}} k \iso \Lambda_i \tensor_{\O_{F_0}} k;
\end{equation}
this is even an isomorphism of $k[u]/(u^2)$-modules, where $u$ acts on the right-hand side as multiplication by $\pi \tensor 1$.

Now let $R$ be a $k$-algebra.  Given an $R$-point $\{\F_i\}_i$ in $M_I^\naive$, for each $i$, let $L_i \subset \lambda_i \tensor_{\O_{K_0}} R[[t]]$ denote the inverse image of
\[
   \F_i \subset \Lambda_i \tensor_{\O_{F_0}} R \iso \lambda_i \tensor_{\O_{K_0}} R
\]
under the reduction-mod-$t$-map
\[
   \lambda_i \tensor_{\O_{K_0}} R[[t]]
   \surj \lambda_i \tensor_{\O_{K_0}} R.
\]
Then $L_i$ is naturally a lattice in $R((u))^n$, and the functor $\lambda_I \to \L\bigl( R((u))^n \bigr)$ sending $\lambda_i \mapsto L_i$ determines a point in $\F_I(R)$.  In this way we get a monomorphism
\begin{equation}\label{disp:embedding}
   \vcenter{
   \xymatrix@R=0ex{
      M_{I,k}^\naive\,  \ar@{^{(}->}[r]  &  \F_I\\
      \{\F_i\}_i  \ar@{|->}[r]  & (\lambda_i \shortmapsto L_i)
   }
   }.
\end{equation}
Since $M_{I,k}^\naive$ is proper, \eqref{disp:embedding} is a closed immersion of ind-schemes.  From now on we shall often identify $M_{I,k}^\naive$ with its image in $\F_I$.

The embedding \eqref{disp:embedding} is \emph{$L^+P_I$-equivariant} with respect to the following left $L^+P_I$-actions on source and target; compare \citelist{\cite{paprap03}*{\s3}\cite{paprap05}*{\s6, \s11}\cite{paprap08}*{\s11}\cite{paprap09}*{\s3.3}}.  For $\F_I$ we just take the natural action furnished by our isomorphism $\F_I \iso \F_{P_I}$.  For $M_{I,k}^\naive$, the tautological action of $P_I$ on $\lambda_I$ yields a natural action of $L^+P_I$ on $\lambda_I \tensor_{\O_{K_0}} k$. The chain isomorphism \eqref{disp:chain_isom} then induces a homomorphism $L^+P_I \to \A \tensor_{\O_{F_0}} k$, where we recall the $\O_{F}$-group scheme \A from \s\ref{ss:latt_chain_auts}.  The \A-action on $M_I^\naive$ now furnishes the desired $L^+P_I$-action on $M_{I,k}^\naive$.  Of course, in this way $L^+P_I$ also acts on $M_{I,k}^\wedge$, $M^\spin_{I,k}$, and $M_{I,k}^\loc$.

\section{Schubert cells in the special fiber}\label{s:schub_cells}
We continue to take $I$ to be a nonempty subset of $\{0,\dotsc,m\}$.  In this section we describe the Schubert cells that are contained in the images of $M_{I,k}^\wedge$ and $M_{I,k}^\spin$ in $\F_I$, and we reduce the Main Theorem to showing that these Schubert cells are indexed by the set of $\mu_{r,s}$-admissible elements in $W_{G,I}\bs\wt W_G/W_{G,I}$, where $\mu_{r,s}$ is the cocharacter \eqref{disp:mu_s}.  We continue to write $i^* := n+1-i$.

\subsection{The image of the special fiber}\label{ss:im_spec_fib}
Let $R$ be a $k$-algebra.  It is clear from the definition of the embedding $M_{I,k}^\naive \inj \F_I$ \eqref{disp:embedding} and the various conditions in the definition of $M_I^\naive$ that the image of $M^\naive_{I,k}(R)$ in $\F_I(R)$ consists precisely of the functors $\lambda_i \mapsto L_i$ in $\F_I(R)$ such that, for all $i \in n\ZZ \pm I$,
\begin{enumerate}
\item\label{it:latt_incl_cond}
   $\lambda_{i,R[[t]]} \supset L_i \supset t\lambda_{i,R[[t]]}$, where $\lambda_{i,R[[t]]} := \lambda_i \tensor_{\O_{K_0}} R[[t]]$;
\item\label{it:rank_n_cond}
   the $R$-module $\lambda_{i,R[[t]]}/L_i$ is locally free of rank $n$; and
\item\label{it:t^-1_cond}
   $\wh L_i = t^{-1} L_{-i}$.
\end{enumerate}

\begin{rk}
Condition \eqref{it:t^-1_cond} is actually redundant; that is, for any functor $\lambda_I \to \L \bigl(R((u))^n \bigr)$ satisfying conditions \eqref{it:afv_chain_cond}, \eqref{it:afv_periodicity_cond}, \eqref{it:afv_rank_cond}, and \eqref{it:afv_dlty_cond} from \s\ref{ss:lattice_theoretic} and conditions \eqref{it:latt_incl_cond} and \eqref{it:rank_n_cond} above, the scalar $\alpha$ appearing in \eqref{it:afv_dlty_cond} must be congruent to $t^{-1} \bmod R[[t]]^\times$.  We leave the details as an exercise to the reader.
\end{rk}

Returning to the main discussion, it is an immediate consequence of $L^+P_I$-equivariance of \eqref{disp:embedding} that \emph{the underlying topological spaces of $M^\naive_{I,k}$, $M_{I,k}^\wedge$, $M^\spin_{I,k}$, and $M_{I,k}^\loc$ are all unions of Schubert varieties in $\F_I$.}  One of our essential goals for the rest of the paper is to obtain a good description of the Schubert varieties that occur in $M_{I,k}^\wedge$ and $M^\spin_{I,k}$.

\subsection{Faces of type \texorpdfstring{$I$}{\emph{I}}}\label{ss:faces_type_I}
As a first step towards our goal, we recall the combinatorial notion of a \emph{face of type $I$} of Kottwitz and Rapoport \cite{kottrap00}*{\s\s9--10}; see also \cite{goertz05}*{\s7}.%
\footnote{Strictly speaking, we shall define what \cites{kottrap00,goertz05} would call a face of type $n\ZZ \pm I$, but we shall ignore this difference.}
Similarly to \cite{sm09a}*{\s5.6}, we shall adopt some different conventions (corresponding to a different choice of base alcove) to better facilitate working with the affine flag variety.

Given an integer $d$, a \emph{$d$-face of type $I$} is a family $(v_i)_{i\in n\ZZ\pm I}$ of vectors $v_i \in \ZZ^n$ such that
\begin{enumerate}
\renewcommand{\theenumi}{F\arabic{enumi}}
\item\label{it:face_per_cond}
   $v_{i+n} = v_i - \mathbf 1$ for all $i \in n\ZZ \pm I$;
\item
   $v_i \geq v_j$ for all $i$, $j \in n\ZZ \pm I$ with $i \leq j$;
\item\label{it:face_adjcy_cond}
   $\Sigma v_i - \Sigma v_j = j-i$ for all $i$, $j \in n\ZZ \pm I$; and
\item\label{it:face_dlty_cond}
   $v_i + v_{-i}^* = \mathbf d$ for all $i \in n\ZZ \pm I$.
\end{enumerate}
A family of vectors $(v_i)_{i\in n\ZZ \pm I}$ is a \emph{face of type $I$} if it is a $d$-face of type $I$ for some $d$.  Since $n$ is odd, it is an easy consequence of \eqref{it:face_adjcy_cond} and \eqref{it:face_dlty_cond} that $d$-faces only occur for even $d$.

For $i = nb + c$ with $b \in \ZZ$ and $0 \leq c < n$, we define
\begin{equation}\label{disp:omega_i}
   \omega_i := \bigl((-1)^{(c)},0^{(n-c)}\bigr) - \mathbf b.
\end{equation}
The family $\omega_I := (\omega_i)_{i\in n\ZZ \pm I}$ is a $0$-face of type $I$, which we call the \emph{standard face of type $I$}.  The natural action of $\wt W_G = X_*(T)_\Gamma \rtimes S_n^*$, with $X_*(T)_\Gamma$ embedded in $\ZZ^n$ as in \eqref{disp:X_*(T)_Gamma_identification}, by affine transformations on $\ZZ^n$ induces a transitive action of $\wt W_G$ on faces of type $I$.  The stabilizer of $\omega_I$ in $\wt W_G$ is plainly $W_{G,I}$, and we identify the faces of type $I$ with $\wt W_G / W_{G,I}$ by taking $\omega_I$ as basepoint.  Note that in the Iwahori case $I = \{0,\dotsc,m\}$, the action of $\wt W_G$ on faces of type $I$ is simply transitive, so that these are identified with $\wt W_G$ itself.

\subsection{The vector \texorpdfstring{$\mu_i$}{mu\_i}}
Given a face $(v_i)_i$ of type $I$, let
\[
   \mu_i := v_i - \omega_i, \quad i \in n\ZZ \pm I.
\]
Then condition \eqref{it:face_per_cond} is equivalent to the periodicity relation
\begin{equation}\label{disp:mu_per_cond}
   \mu_i = \mu_{i+n} \quad \text{for all}\quad i,
\end{equation}
and \eqref{it:face_adjcy_cond} implies
\begin{equation}\label{disp:mu_adjcy_cond}
   \Sigma\mu_i = \Sigma\mu_j \quad \text{for all}\quad i,\ j.
\end{equation}
If $(v_i)_i$ is a $d$-face, then condition \eqref{it:face_dlty_cond} is equivalent to
\begin{equation}\label{disp:mu_dlty_cond}
   \mu_i + \mu_{-i}^* = \mathbf d \quad \text{for all}\quad i.
\end{equation}

We now prove a couple of lemmas for later use.  For $i \in I$, let
\[
   A_i := \{1,2,\dotsc,i,i^*,i^*+1,\dotsc,n\}
   \quad\text{and}\quad
   B_i := \{i+1,i+2,\dotsc,n-i\}.
\]

\begin{lem}[Basic inequalities]\label{st:basic_ineqs}
Suppose $(v_i)_i$ is a $d$-face of type $I$.  Then for any $i \in I$, we have
\[
   d \leq \mu_i(j) + \mu_i(j^*) \leq d+1 \quad\text{for}\quad j \in A_i
\]
and
\[
   d-1 \leq \mu_i(j) + \mu_i(j^*) \leq d \quad\text{for}\quad j \in B_i.
\]
\end{lem}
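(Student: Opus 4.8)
The plan is to set up the constraints that a $d$-face $(v_i)_i$ of type $I$ must satisfy at a fixed index $i \in I$, translate everything into statements about $\mu_i = v_i - \omega_i$, and then extract the claimed two-sided bounds by combining the duality relation \eqref{it:face_dlty_cond} (equivalently \eqref{disp:mu_dlty_cond}) with the monotonicity relation (F2) for $v$ and the explicit shape of the standard face $\omega_i$ from \eqref{disp:omega_i}. First I would fix $i \in I$ and recall that, writing $i = nb+c$, the vector $\omega_i$ has entries $-1$ in positions $1,\dots,c$ and $0$ in positions $c+1,\dots,n$, shifted by $-\mathbf b$; for the purposes of the inequalities the global shift $\mathbf b$ cancels in every quantity $\mu_i(j)+\mu_i(j^*)$ versus $d$, so one may as well take $b=0$, i.e.\ $0 \le c = i < n$ and $\omega_i = \bigl((-1)^{(i)},0^{(n-i)}\bigr)$. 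Then $\omega_i(j) + \omega_i(j^*)$ is: $-2$ when both $j$ and $j^*$ are $\le i$, which happens for no $j$ since $i \le m < i^*$; $-1$ when exactly one of $j, j^*$ is $\le i$, i.e.\ for $j \in \{1,\dots,i\}\cup\{i^*,\dots,n\} = A_i$; and $0$ when neither is $\le i$, i.e.\ for $j \in \{i+1,\dots,n-i\} = B_i$. This is exactly the dichotomy $A_i$ versus $B_i$ in the statement.

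Next I would bring in the duality and monotonicity constraints on $v_i$ itself. The duality relation $v_i + v_{-i}^* = \mathbf d$ gives $v_i(j) + v_i(j^*) = d - \bigl(v_{-i}(j^*) - v_i(j^*)\bigr) - \bigl(v_{-i}(j) - v_i(j)\bigr)$ after unwinding $v_{-i}^*(j) = v_{-i}(j^*)$; more usefully, since $-i \le i$ (as $i \ge 0$), relation (F2) yields $v_{-i} \ge v_i$ entrywise, while (F3) forces $\Sigma v_{-i} - \Sigma v_i = i - (-i) = 2i$, so $v_{-i} - v_i$ is a nonnegative integer vector with entry sum $2i$. Combining with $v_{-i}^* = \mathbf d - v_i$ — equivalently $v_{-i}(j) = d - v_i(j^*)$ — one gets $d - v_i(j^*) \ge v_i(j)$, i.e.
\[
   v_i(j) + v_i(j^*) \le d \quad\text{for all } j,
\]
and hence, using $v_i(j^*) + v_i(j) \le d$ applied at $j$ and its mirror, also $v_i(j) + v_i(j^*) \ge 2d - \bigl(v_i(j)+v_i(j^*)\bigr)$ combined with the entrywise inequality... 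Actually the cleaner route for the lower bound is: from $v_{-i} - v_i \ge 0$ with total mass $2i$, for any pair $\{j,j^*\}$ the contribution $\bigl(v_{-i}(j)-v_i(j)\bigr) + \bigl(v_{-i}(j^*)-v_i(j^*)\bigr)$ lies between $0$ and $2i$, but a sharper pairwise bound comes from monotonicity of $v_{-i}$ and $v_i$ across the chain $\lambda_{-i} \supset \dots \supset \lambda_i$; specifically each difference $v_{-i}(\ell) - v_i(\ell) \in \{0,1\}$ because passing from $\lambda_i$ to $\lambda_{-i}$ changes the lattice at $2i$ coordinate slots by one step each (this is where the explicit form of the standard lattices and $i \le m$ enter). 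Granting $v_{-i}(j) - v_i(j) \in \{0,1\}$ for every $j$, the duality relation $v_{-i}(j) = d - v_i(j^*)$ gives $v_i(j) + v_i(j^*) \in \{d-1, d\}$ for all $j$, which is a uniform statement; then subtracting $\omega_i(j)+\omega_i(j^*) \in \{-1,0\}$ according to whether $j \in A_i$ or $j \in B_i$ produces exactly $\mu_i(j)+\mu_i(j^*) \in [d, d+1]$ on $A_i$ and $\in [d-1,d]$ on $B_i$.

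The main obstacle, and the step I would spend the most care on, is justifying that $v_{-i}(j) - v_i(j) \in \{0,1\}$ for each coordinate $j$ — i.e.\ that the "jump" in the face from index $i$ to index $-i$ is at most one in every slot. This is not formal from (F1)–(F4) alone; it uses that $\lambda_i$ and $\lambda_{-i}$ are \emph{consecutive} members of the self-dual chain in the relevant sense (for $i \in \{0,\dots,m\}$, $\lambda_{-i}$ and $\lambda_i$ differ by exactly the dual operation, changing $i$ of the $u$-powers), together with the fact that a face is pinned between two alcove-adjacent vertices so its coordinates can each move by at most $1$ between neighboring indices. Concretely I would argue: $v_i \ge v_{-i} - \mathbf 1$ would follow if $i$ and $-i$ are such that $\lambda_i \supset u\lambda_{-i}$ (periodicity plus inclusion), and this holds precisely because $0 \le i \le m$ so that $-n < -i \le 0 \le i < n$ and $\lambda_i \supset \lambda_{-i} \supset u\lambda_i = \lambda_{i-n}$; then (F2) gives $v_{-i} \ge v_i$ and $v_{i-n} = v_i - \mathbf 1 \ge \dots$ hmm — so $v_{-i} \ge v_i \ge v_{-i} - \mathbf 1$, i.e.\ $0 \le v_{-i}(j) - v_i(j) \le 1$ for all $j$, as needed. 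So in fact the bound $v_{-i}(j)-v_i(j)\in\{0,1\}$ drops out of (F1)+(F2) once one observes the chain inclusions $\lambda_i \supset \lambda_{-i} \supset \lambda_{i-n}$ valid for $i \in \{0,\dots,m\}$; the only thing to be careful about is that these three lattices indeed sit in the subchain $\lambda_I$ (they do, since $\pm i, i - n \in n\ZZ \pm I$ when $i \in I$), so that (F2) applies. With that in hand the rest is the bookkeeping with $\omega_i$ described above, and the lemma follows.
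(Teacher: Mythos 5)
Your proof is essentially correct, and it gives a self-contained argument where the paper instead reduces to the Iwahori case and then defers to a result in another reference (\cite{sm09b}*{4.4.1}). The structure of your argument is sound and clean: compute $\omega_i(j)+\omega_i(j^*) \in \{-1,0\}$ according to whether $j \in A_i$ or $j \in B_i$; use (F1), (F2), (F4) to show $v_i(j) + v_i(j^*) \in \{d-1, d\}$ for all $j$; and then subtract. Notably, your argument works directly for any nonempty $I$ and doesn't need the Iwahori reduction (which would itself require observing that a face of type $I$ extends to a face of the full chain, or arguing as you do that $i-n, -i, i \in n\ZZ \pm I$ whenever $i \in I$). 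The one thing to fix is the sign slip in the middle: from (F1), one has $v_{i-n} = v_i + \mathbf 1$, not $v_i - \mathbf 1$ (substitute $i \mapsto i-n$ into $v_{i+n} = v_i - \mathbf 1$). With the correct sign, (F2) applied to the index chain $i - n < -i \le i$ gives $v_i + \mathbf 1 = v_{i-n} \ge v_{-i} \ge v_i$, which is exactly the two-sided bound $\mathbf 0 \le v_{-i} - v_i \le \mathbf 1$ you assert; combined with $v_{-i}(j) = d - v_i(j^*)$ from (F4), this yields $v_i(j) + v_i(j^*) = d - (v_{-i}(j) - v_i(j)) \in \{d-1, d\}$ and the lemma follows. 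Also note that the detour about ``$2i$ coordinate slots'' is unnecessary — as you yourself observe at the end, the bound drops out of (F1)+(F2) alone, so the appeal to the explicit shape of the standard lattices can simply be removed.
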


\begin{proof}
It suffices to prove the lemma in the Iwahori case, that is, when $I = \{0,\dotsc,m\}$.  Modulo conventions related to the choice of base alcove, this is done for $d = 0$ in \cite{sm09b}*{4.4.1}, and the argument there works just as well for arbitrary $d$.
\end{proof}

\begin{rk}\label{rk:basic_ineqs_arb_index}
For $i \in I$, it follows immediately from the lemma and \eqref{disp:mu_dlty_cond} that
\[
   d-1 \leq \mu_{-i}(j) + \mu_{-i}(j^*) \leq d \quad\text{for}\quad j \in A_i
\]
and
\[
   d \leq \mu_{-i}(j) + \mu_{-i}(j^*) \leq d+1 \quad\text{for}\quad j \in B_i.
\]
The periodicity relation \eqref{disp:mu_per_cond} on the $\mu$'s now gives analogous basic inequalities on the entries of $\mu_{i'}$ for any $i' \in n\ZZ \pm I$.  In particular, since $d$ must be even, we conclude that $\mu_{i'}(m+1) = d/2$ for all $i'$.
\end{rk}

We turn to our second lemma.  Let $i \in n\ZZ \pm I$.  We say that $\mu_i$ is \emph{self-dual} if $\mu_i = \mu_{-i}$, or in other words, if $\mu_i + \mu_i^* = \mathbf d$ where $(v_i)_i$ is a $d$-face.

\begin{lem}\label{st:self-dual_mu}
Suppose that $(v_i)_i$ is a $d$-face of type $I$.  Let $i \in I$, and for $\mu \in \{\mu_i,\mu_{-i}\}$, suppose that the equality $\mu(j) + \mu(j^*) = d$ holds for all $j \in A_i$ or for all $j \in B_i$.  Then $\mu$ is self-dual.\end{lem}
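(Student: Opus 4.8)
The plan is a short counting argument built on the basic inequalities \eqref{st:basic_ineqs} together with the duality and adjacency relations \eqref{disp:mu_dlty_cond} and \eqref{disp:mu_adjcy_cond}. First I would reduce to the case $\mu = \mu_i$: by \eqref{disp:mu_dlty_cond} we have $\mu_{-i} = \mathbf d - \mu_i^*$, so $\mu_{-i}(j) + \mu_{-i}(j^*) = 2d - \mu_i(j) - \mu_i(j^*)$ for every $j$. Hence, on any fixed subset of $\{1,\dotsc,n\}$, the equality $\mu(j) + \mu(j^*) = d$ holds for $\mu = \mu_i$ exactly when it holds for $\mu = \mu_{-i}$; and ``$\mu_i$ is self-dual'' and ``$\mu_{-i}$ is self-dual'' both say $\mu_i = \mu_{-i}$. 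So it suffices to treat $\mu = \mu_i$.

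Next I would record the global identity
\[
   \sum_{j=1}^n \bigl( \mu_i(j) + \mu_i(j^*) \bigr) = nd .
\]
The left-hand side equals $2\Sigma\mu_i$ because $j \mapsto j^*$ permutes $\{1,\dotsc,n\}$; and summing the entries of $\mu_i + \mu_{-i}^* = \mathbf d$ gives $\Sigma\mu_i + \Sigma\mu_{-i} = nd$, while $\Sigma\mu_{-i} = \Sigma\mu_i$ by \eqref{disp:mu_adjcy_cond}, so $2\Sigma\mu_i = nd$. Since $A_i$ and $B_i$ partition $\{1,\dotsc,n\}$, with $|A_i| = 2i$ and $|B_i| = n-2i$, the identity above becomes
\[
   \sum_{j \in A_i} \bigl( \mu_i(j) + \mu_i(j^*) \bigr)
   + \sum_{j \in B_i} \bigl( \mu_i(j) + \mu_i(j^*) \bigr)
   = d|A_i| + d|B_i| .
\]

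Finally I would feed in the hypothesis. If $\mu_i(j) + \mu_i(j^*) = d$ for all $j \in A_i$, then the displayed identity forces $\sum_{j \in B_i} \bigl( \mu_i(j) + \mu_i(j^*) \bigr) = d|B_i|$; but the basic inequalities \eqref{st:basic_ineqs} bound each summand indexed by $B_i$ above by $d$, so each of them must in fact equal $d$. Symmetrically, if the hypothesis holds on $B_i$, the identity forces $\sum_{j \in A_i} \bigl( \mu_i(j) + \mu_i(j^*) \bigr) = d|A_i|$, and now the lower bound $\mu_i(j) + \mu_i(j^*) \geq d$ on $A_i$ from \eqref{st:basic_ineqs} forces each summand there to equal $d$. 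Either way $\mu_i(j) + \mu_i(j^*) = d$ for all $j$, i.e.\ $\mu_i + \mu_i^* = \mathbf d$; comparing with $\mu_i + \mu_{-i}^* = \mathbf d$ yields $\mu_i^* = \mu_{-i}^*$, hence $\mu_i = \mu_{-i}$, which is the assertion. I do not expect a genuine obstacle here; the one point deserving attention is that the bounds in \eqref{st:basic_ineqs} point in opposite directions on $A_i$ and on $B_i$, and it is precisely this that lets a sum of $|B_i|$ quantities each $\leq d$ (respectively $|A_i|$ quantities each $\geq d$) totalling $d|B_i|$ (respectively $d|A_i|$) pin down every single term. The degenerate case $i = 0$, where $-i = i$ and the claim is immediate, is also covered automatically by this argument.
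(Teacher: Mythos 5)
Your proposal is correct and follows essentially the same route as the paper: establish $\Sigma\mu = nd/2$ from the adjacency and duality relations, then exploit the fact that the basic inequalities bound $\mu(j)+\mu(j^*)$ from below by $d$ on $A_i$ and from above by $d$ on $B_i$, so a total of $nd$ forces equality everywhere once it holds on either piece. The paper's proof leaves this counting step implicit (``an obvious consequence of the basic inequalities''); you have simply spelled it out, and your reduction from $\mu_{-i}$ to $\mu_i$ is a correct, if optional, normalization.
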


\begin{proof}
The relations \eqref{disp:mu_adjcy_cond} and \eqref{disp:mu_dlty_cond} imply $\Sigma \mu = nd/2$, and the conclusion is then an obvious consequence of the basic inequalities.
\end{proof}

Of course, the statement of the lemma can be extended in an obvious way to $\mu_{i'}$ for any $i' \in n\ZZ\pm I$ by using periodicity.

\subsection{Naive permissibility}\label{ss:naive_perm}
Let $w \in W_{G,I} \bs \wt W_G / W_{G,I}$.  Since the orbit $\wt W_G \cdot \lambda_I$ meets every Schubert cell in $\F_I$, the condition that a cell $C_w$ be contained in $M_{I,k}^\naive$ (resp., $M_{I,k}^\wedge$; resp., $M_{I,k}^\spin$) amounts to the condition that the point $\wt w \cdot \lambda_I$ be contained in $M_{I,k}^\naive$ (resp., $M_{I,k}^\wedge$; resp., $M_{I,k}^\spin$), where $\wt w$ is any representative of $w$ in $\wt W_G/W_{G,I}$.  We shall find it convenient to express these containment conditions in terms of faces of type $I$, beginning in this subsection with containment in $M_{I,k}^\naive$.

Let $(v_i)_i := \wt w \cdot \omega_I$ denote the face of type $I$ attached to $\wt w$.  Then it is clear from the definitions and from \s\ref{ss:im_spec_fib} that $C_w$ is contained in $M_{I,k}^\naive$ $\iff$
\begin{enumerate}
\renewcommand{\theenumi}{P\arabic{enumi}}
\item\label{it:ineq_cond}
   $\omega_i \leq v_i \leq \omega_i + \mathbf 2$ for all $i \in n\ZZ \pm I$; and
\item\label{it:size_cond}
   $\Sigma v_i = n - i$ for one, hence every, $i \in n\ZZ \pm I$.
\end{enumerate}
We say that such a $\wt w$ is \emph{naively permissible}.  If $\wt w$ is  naively permissible, then necessarily $(v_i)_i$ is a $2$-face.

Given a naively permissible $\wt w$, the point $\wt w\cdot \lambda_I$ in $\F_I(k)$ corresponds to a point $(\F_i \subset \Lambda_i \tensor_{\O_{F_0}} k)_i$ in $M_{I,k}^\naive(k)$ of a rather special sort:  namely, identifying $\Lambda_i \tensor_{\O_{F_0}} k$ with $\O_F^n \tensor_{\O_{F_0}} k$ via \eqref{disp:latt_triv}, we have
\begin{enumerate}
\renewcommand{\theenumi}{S}
\item\label{disp:S_fixed}
   for all $i$, $\F_i$, regarded as a subspace in $\O_F^n \tensor_{\O_{F_0}} k$, is $k$-spanned by $n$ of the elements $\epsilon_1\tensor 1,\dotsc$, $\epsilon_n\tensor 1$, $\pi\epsilon_1 \tensor 1,\dotsc$, $\pi\epsilon_n \tensor 1$,
\end{enumerate}
where we recall from \s\ref{ss:lattices} that $\epsilon_1,\dotsc,\epsilon_n$ denotes the standard basis in $\O_F^n$.  On the other hand, for any point $(\F_i)_i$ in $M_{I,k}^\naive(k)$, let us say that $(\F_i)_i$ is an \emph{$S$-fixed point} if it satisfies \eqref{disp:S_fixed}; it is easy to check that the $S$-fixed points are exactly the points in $M_k^\naive(k)$ fixed by $L^+S(k)$.  In this way, we get a bijection between the naively permissible $\wt w\in \wt W_G/W_{G,I}$ and the $S$-fixed points in $M_{I,k}^\naive(k)$, which we denote by $\wt w \mapsto (\F_i^{\wt w})_i$.

The $S$-fixed point $(\F^{\wt w}_i)_i$ attached to a  naively permissible $\wt w$ is conveniently described in terms of the face $(v_i)_i$ of type $I$ attached to $\wt w$.  Indeed, let
\[
   \mu^{\wt w}_i := v_i - \omega_i, \quad i \in n\ZZ \pm I.
\]
Then $\Sigma\mu^{\wt w}_i = n$,
\begin{equation}\label{disp:0=<mu=<2}
   \mathbf 0 \leq \mu^{\wt w}_i \leq \mathbf 2, 
\end{equation}
and
\begin{equation}\label{disp:F_i}
   \F_i^{\wt w} = \sum_{\mu^{\wt w}_i(j) = 0} k\cdot (\epsilon_j\tensor 1)
                          +\sum_{\mu^{\wt w}_i(j) = 0,1} k \cdot (\pi\epsilon_j \tensor 1) \subset \O_F^n \tensor_{\O_{F_0}} k.
\end{equation}

\subsection{Wedge- and spin-permissibility}\label{ss:wedge_spin_perm}
Let $\wt w \in \wt W_G/W_{G,I}$ be naively permissible, let $(v_i)_i$ denote its associated face of type $I$, let $\mu^{\wt w}_i := v_i - \omega_i$ for all $i$, and let $(\F^{\wt w}_i)_i \in M_{I,k}^\naive(k)$ denote the associated $S$-fixed point.  We say that $\wt w$ is \emph{wedge-permissible} (resp.\ \emph{spin-permiss\-i\-ble}) if the Schubert cell in $\F_I$ attached to $\wt w$ is contained in $M_{I,k}^\wedge$ (resp., $M_{I,k}^\spin$).  Our aim in this subsection is to express the conditions of wedge- and spin-permissibility in terms of the $v_i$'s and $\mu^{\wt w}_i$'s.

We begin with wedge-permissibility. By definition,
\[
   (\F^{\wt w}_i)_i \in M_{I,k}^\wedge(k) \iff
   \begin{varwidth}{\textwidth}
      \centering
      for all $i$, $\bigwedge_k^{s+1} (\pi\tensor 1 \mid \F^{\wt w}_i) = 0$\\
      and $\bigwedge_k^{r+1} (\pi\tensor 1 \mid \F^{\wt w}_i) = 0$,
   \end{varwidth}
\]
where we recall our fixed partition $n = s + r$ with $s < r$.  For fixed $i$, the second equality on the right-hand side of the display is implied by the first.  Hence we read off the following from \eqref{disp:F_i}.

\begin{prop}\label{st:wedge-perm}
$\wt w \in \wt W_G/W_{G,I}$ is wedge-permissible $\iff$ $\wt w$ is naively permissible and
\begin{enumerate}
\renewcommand{\theenumi}{P3}
\item
   for all $i \in n\ZZ \pm I$, $\#\smash{\bigl\{\, j \bigm| \mu^{\wt w}_i(j) = 0\, \bigr\}} \leq s$.\qed
\end{enumerate}
\end{prop}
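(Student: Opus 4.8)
The plan is to unwind the wedge condition \eqref{it:kottwitz_cond}--type data for an $S$-fixed point using the explicit description \eqref{disp:F_i} of $\F_i^{\wt w}$. Fix $i$ and write $\F := \F_i^{\wt w}$, $\mu := \mu_i^{\wt w}$. From \eqref{disp:F_i}, $\F$ has a $k$-basis consisting of the vectors $\epsilon_j \tensor 1$ for those $j$ with $\mu(j) = 0$, together with $\pi\epsilon_j \tensor 1$ for those $j$ with $\mu(j) \in \{0,1\}$; here I use that over $k$ the element $\pi \tensor 1$ satisfies $(\pi\tensor 1)^2 = 0$, so $\pi\epsilon_j \tensor 1$ is annihilated by $\pi\tensor 1$, while $(\pi\tensor 1)(\epsilon_j \tensor 1) = \pi\epsilon_j \tensor 1$. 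First I would observe that the operator $\pi\tensor 1$ on $\F$ is therefore diagonalized (in the obvious basis) into a nilpotent Jordan form: on the span of $\{\epsilon_j \tensor 1 : \mu(j)=0\}$ it acts by sending each $\epsilon_j \tensor 1$ to $\pi\epsilon_j \tensor 1 \in \F$, and on the span of the remaining basis vectors it acts as zero. Consequently the image of $\pi\tensor 1$ on $\F$ has dimension exactly $\#\{j : \mu(j) = 0\}$, and the kernel has dimension $n - \#\{j : \mu(j) = 0\}$.

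Next I would recall the elementary linear-algebra fact that for an endomorphism $\varphi$ of a finite free $k$-module, $\bigwedge^{t}\varphi = 0$ if and only if $\operatorname{rank}\varphi \le t - 1$, equivalently $\operatorname{rank}\varphi < t$. Applying this with $\varphi = \pi\tensor 1$ on $\F$: the vanishing $\bigwedge_k^{s+1}(\pi\tensor 1 \mid \F) = 0$ is equivalent to $\operatorname{rank}(\pi\tensor 1 \mid \F) \le s$, which by the rank computation above is exactly $\#\{j : \mu(j) = 0\} \le s$. This is precisely condition (P3) for the index $i$. Since (by the remark preceding the statement, or directly because $s < r$ and the total rank is $n$) the condition $\bigwedge_k^{r+1}(\pi\tensor 1 \mid \F) = 0$ is automatically implied once $\bigwedge_k^{s+1}(\pi\tensor 1\mid\F) = 0$ holds — indeed $\operatorname{rank}(\pi\tensor1\mid\F) \le s < r+1$ — the wedge condition at $i$ reduces to (P3) at $i$ alone. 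Running this over all $i \in n\ZZ \pm I$, and using that wedge-permissibility of $\wt w$ means by definition that the Schubert cell $C_{\wt w} \subset \F_I$ lies in $M_{I,k}^\wedge$, which (as explained in \s\ref{ss:wedge_spin_perm}, via $L^+P_I$-equivariance and the fact that $\wt W_G \cdot \lambda_I$ meets every Schubert cell) is equivalent to $(\F_i^{\wt w})_i \in M_{I,k}^\wedge(k)$, we conclude: $\wt w$ is wedge-permissible if and only if $\wt w$ is naively permissible and (P3) holds.

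I do not anticipate a serious obstacle here; the statement is essentially a bookkeeping translation, and the only mild subtlety is being careful about the two-sidedness of the wedge condition and confirming that the $(r+1)$-fold wedge vanishing is redundant given $s < r$. The cleanest organization is: (1) compute the rank of $\pi\tensor 1$ on $\F_i^{\wt w}$ from \eqref{disp:F_i}; (2) invoke the rank-versus-exterior-power criterion; (3) note the redundancy of the second vanishing; (4) quote \s\ref{ss:wedge_spin_perm}/\s\ref{ss:naive_perm} to pass between Schubert-cell containment and the $S$-fixed point $(\F_i^{\wt w})_i$. Each of steps (1)--(4) is a short paragraph, and the periodicity relation \eqref{disp:mu_per_cond} shows the condition at $i$ and at $i + n$ coincide, so it suffices to impose it for $i$ in a set of representatives, matching the indexing $i \in n\ZZ \pm I$ in the statement.
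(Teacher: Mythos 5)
Your proposal is correct and follows essentially the same route as the paper. The paper's proof is extremely terse: it simply notes that over $k$ the wedge condition for the $S$-fixed point becomes $\bigwedge_k^{s+1}(\pi\tensor 1\mid \F_i^{\wt w}) = 0$ and $\bigwedge_k^{r+1}(\pi\tensor 1\mid \F_i^{\wt w}) = 0$ (because $1\tensor\pi$ vanishes over $k$), observes the second vanishing is implied by the first since $s<r$, and then asserts one ``reads off'' (P3) from \eqref{disp:F_i}. Your write-up fills in exactly the expected content of that last step --- the rank computation $\operatorname{rank}(\pi\tensor 1\mid\F_i^{\wt w}) = \#\{j : \mu_i^{\wt w}(j)=0\}$ and the criterion that the $(s+1)$-fold exterior power of an endomorphism vanishes iff its rank is at most $s$ --- and also correctly uses $L^+P_I$-equivariance to reduce Schubert-cell containment to the single $S$-fixed point, as the paper does. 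No gaps.
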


We next turn to spin-permissibility.  We are going to show that for our naively permissible $\wt w$, \emph{the point $(\F^{\wt w}_i)_i\in M_{I,k}^\naive(k)$ already satisfies the spin condition,} regardless of the parity of $s$.  Our discussion will largely parallel \cite{sm09a}*{\s7.5}.  Of course, the spin condition is a condition on $\F^{\wt w}_i$ that must be checked for each $i \in I \cup (-I)$.  We shall only do so explicitly for $i\in I$, leaving the entirely analogous case $i \in -I$ to the reader.

Fix $i\in I$.  We continue to identify $\O_F^n$ with $\Lambda_i$ via \eqref{disp:latt_triv}.  Since $\mu_i^{\wt w}(m+1)$ necessarily equals $1$ \eqref{rk:basic_ineqs_arb_index}, the $n$ elements in $\O_F^n \tensor_{\O_{F_0}} \O_F$
\begin{equation}\label{disp:F_i_spanners}
\begin{gathered}
   \epsilon_j \tensor 1 \quad\text{for}\quad \mu^{\wt w}_i(j) = 0;\\
   \pi\epsilon_j \tensor 1 \quad\text{for}\quad \mu^{\wt w}_i(j) = 0,1, \ j \neq m+1;\ \text{and}\\
   \pi\epsilon_{m+1} \tensor 1 \pm \epsilon_{m+1} \tensor \pi
\end{gathered}
\end{equation}
span an $\O_F$-submodule whose image under the reduction map $\O_F^n \tensor_{\O_{F_0}} \O_F \surj \O_F^n \tensor_{\O_{F_0}} k$ is $\F^{\wt w}_i$.  (For now we shall allow ourselves the choice of either sign in the last element in \eqref{disp:F_i_spanners}.)  Take the wedge product (in any order) of the $n$ elements \eqref{disp:F_i_spanners} in $\bigwedge_{\O_F}^n (\O_F^n \tensor_{\O_{F_0}} \O_F)$, and let $f \in \bigwedge_{\O_F}^n (\Lambda_i \tensor_{\O_{F_0}} \O_F)$ denote the image of this element under the isomorphism $\bigwedge_{\O_F}^n (\O_F^n \tensor_{\O_{F_0}} \O_F) \isoarrow \bigwedge_{\O_F}^n (\Lambda_i \tensor_{\O_{F_0}} \O_F)$ induced by \eqref{disp:latt_triv}.  Then, up to a sign and factor of $2$, and in terms of the notation \eqref{disp:f_E}, $f$ equals
\[
   \pi^q f_{E_\pm} \in \sideset{}{_F^n}{\bigwedge} (V \tensor_{F_0} F),
\]
where $E_\pm \subset \{1,\dotsc,2n\}$ is the subset of cardinality $n$
{\allowdisplaybreaks
\begin{align*}
   E_\pm := {}&\bigl\{\, j \in \{1,\dotsc,i\} \bigm| \mu^{\wt w}_i(j) = 0 \,\bigr\}\\
                   &\quad\amalg \bigl\{\, j \in \{i+1,\dotsc,m\} \bigm| \mu^{\wt w}_i(j) = 0,1 \,\bigr\}\\
                   &\quad\amalg \bigl\{\, j \in \{m+2,\dotsc,n\} \bigm| \mu^{\wt w}_i(j) = 0 \,\bigr\}\\
                   &\quad\amalg\bigl\{\, n+j \in \{n+1,\dotsc,n+i\} \bigm| \mu^{\wt w}_i(j) = 0,1 \,\bigr\}\\
                   &\quad\amalg\bigl\{\, n+j \in \{n+i+1,\dotsc,n+m\} \bigm| \mu^{\wt w}_i(j) = 0 \,\bigr\}\\
                   &\quad\amalg\bigl\{\, n+j \in \{n+m+2,\dotsc,2n\} \bigm| \mu^{\wt w}_i(j) = 0,1 \,\bigr\}\\
                   &\quad\amalg \{b_\pm\},
\end{align*}
}%
where $b_- := m+1$ and $b_+ := n+m+1$, and we choose the sign according to the choice of sign in \eqref{disp:F_i_spanners}; and where
\[
   q := 1 + 2\cdot \#\bigl(E_\pm \cap \{i+1,\dotsc,m\}\bigr) = 1 + 2\cdot \#\bigl\{\, j \in \{i+1,\dotsc,m\} \bigm| \mu^{\wt w}_i(j) = 0,1 \,\bigr\}.
\]

To study the spin condition for $\F^{\wt w}_i$, we also need the set $E_\pm^\perp = (2n+1-E_\pm)^c$, which is given by
{\allowdisplaybreaks
\begin{align*}
   E_\pm^\perp = {}&\bigl\{\, j \in \{1,\dotsc,m\} \bigm| \mu^{\wt w}_i(j^*) \neq 0,1 \,\bigr\}\\
          &\quad\amalg \bigl\{\, j \in \{m+2,\dotsc,n-i\} \bigm| \mu^{\wt w}_i(j^*) \neq 0 \,\bigr\}\\
          &\quad\amalg \bigl\{\, j \in \{i^*,\dotsc,n\} \bigm| \mu^{\wt w}_i(j^*) \neq 0,1 \,\bigr\}\\
          &\quad\amalg\bigl\{\, n+j \in \{n+1,\dotsc,n+m\} \bigm| \mu^{\wt w}_i(j^*) \neq 0 \,\bigr\}\\
          &\quad\amalg\bigl\{\, n+j \in \{n+m+2,\dotsc,2n-i\} \bigm| \mu^{\wt w}_i(j^*) \neq 0,1 \,\bigr\}\\
          &\quad\amalg\bigl\{\, n+j \in \{n+i^*,\dotsc,2n\} \bigm| \mu^{\wt w}_i(j^*) \neq 0 \,\bigr\}\\
          &\quad\amalg \{b_\pm\}.
\end{align*}
}%
Up to a sign and factor of $2$, the element in $\bigwedge_{\O_F}^n (\Lambda_i \tensor_{\O_{F_0}} \O_F)$
\[
\begin{split}
    &(\pi e_{m+1} \pm e_{m+1}\tensor \pi)
    \wedge
    \bigwedge_{j \in E_\pm^\perp \cap \{1,\dotsc,i\}} (\pi^{-1} e_j \tensor 1)\\
   &\qquad\wedge
   \bigwedge_{j \in E_\pm^\perp \cap \{i+1,\dotsc,m\}} (\pi e_j \tensor 1)
   \qquad\wedge
   \bigwedge_{j \in E_\pm^\perp \cap \{m+2,\dotsc,n\}} (e_j \tensor 1)\\
   &\qquad\qquad\wedge
   \bigwedge_{n+j \in E_\pm^\perp \cap \{n+1,\dotsc,n+m\}} (e_j \tensor 1)
   \qquad\wedge
   \bigwedge_{n+j \in E_\pm^\perp \cap \{n+m+2,\dotsc,2n\}} (\pi e_j \tensor 1)
\end{split}
\]
equals
\[
   \pi^{q^\perp} f_{E_\pm^\perp} \in \sideset{}{_F^n}{\bigwedge} (V \tensor_{F_0} F),
\]
where
\[
   q^\perp := 1 + 2 \cdot \#\bigl(E_\pm^\perp \cap \{i+1,\dotsc,m\}\bigr) = 1 + 2 \cdot \#\bigl\{\, j \in \{i+1,\dotsc,m\} \bigm| \mu^{\wt w}_i(j^*) = 2 \,\bigr\}.
\]

Comparing our expressions for $q$ and $q^\perp$, we deduce immediately from \eqref{disp:0=<mu=<2} and the basic inequalities \eqref{st:basic_ineqs} that $q \geq q^\perp$.  We shall now consider separately the cases $q > q^\perp$ and $q = q^\perp$.

If $q > q^\perp$, then let $E := E_\pm$, and consider the elements
\begin{equation}\label{disp:disp_elts}
   \pi^q f_{E} \pm \pi^{q - q^\perp} \pi^{q^\perp} \sgn(\sigma_E) f_{E^\perp}
   \in\Bigl(\sideset{}{_{\O_F}^n}{\bigwedge} (\Lambda_i \tensor_{\O_{F_0}} \O_F)\Bigr)_{\pm 1},
\end{equation}
where we allow either sign in \eqref{disp:disp_elts} independently of the sign in \eqref{disp:F_i_spanners}, and where the notation is as in \s\ref{ss:spin_cond}.  By definition of $\pi^q f_E$, for either choice of sign in \eqref{disp:F_i_spanners}, the common image of the elements \eqref{disp:disp_elts} in $\bigwedge_{k}^n (\Lambda_i \tensor_{\O_{F_0}} k)$ spans the line $\bigwedge_k^n \F^{\wt w}_i$.  Hence $\F^{\wt w}_i$ satisfies the spin condition.

If $q = q^\perp$, then we deduce at once from the basic inequalities that for all pairs $j$, $j^* \in \{i+1,\dotsc,m,m+2,\dotsc,n-i\}$, one of the entries $\mu^{\wt w}_i(j)$, $\mu^{\wt w}_i(j^*)$ is $0$ and the other is $2$; and, as always, $\mu^{\wt w}_i(m+1) = 1$.  Thus $\mu^{\wt w}_i$ is self-dual by \eqref{st:self-dual_mu}, and we read off from the explicit expressions for $E_\pm$ and $E_\pm^\perp$ that $E_\pm = E_\pm^\perp$.  It is clear from the definitions that $\sgn(\sigma_{E_+}) = - \sgn(\sigma_{E_-})$, and therefore one of the two elements $\pi^q f_{E_+}$, $\pi^q f_{E_-}$ is contained in $\bigl(\bigwedge_{\O_F}^n (\Lambda_i \tensor_{\O_{F_0}} \O_F)\bigr)_{+1}$ and the other is contained in $\bigl(\bigwedge_{\O_F}^n (\Lambda_i \tensor_{\O_{F_0}} \O_F)\bigr)_{-1}$.  Since the common image in $\bigwedge_k^n (\Lambda_i \tensor_{\O_{F_0}}k)$ of these elements spans $\bigwedge_k^n \F^{\wt w}_i$, we conclude that $\F^{\wt w}_i$ satisfies the spin condition.

%

We have now shown that the point $(\F^{\wt w}_i)_i \in M_{I,k}^\naive(k)$ satisfies the spin condition, and with it the following.

\begin{prop}\label{st:wedge_iff_spin-perm}
$\wt w \in \wt W_G/W_{G,I}$ is spin-permissible $\iff$ $\wt w$ is wedge-permissi\-ble.\qed
\end{prop}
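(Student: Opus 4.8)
The implication ``spin-permissible $\implies$ wedge-permissible'' is immediate, since $M_I^\spin$ is a closed subscheme of $M_I^\wedge$; so the plan is to establish the reverse implication. Let $\wt w \in \wt W_G/W_{G,I}$ be wedge-permissible. Then $\wt w$ is in particular naively permissible, so the attached $S$-fixed point $(\F^{\wt w}_i)_i$ makes sense and lies in $M_{I,k}^\wedge(k)$. By the $L^+P_I$-equivariance of the embedding \eqref{disp:embedding} and the fact that the orbit $\wt W_G \cdot \lambda_I$ meets every Schubert cell in $\F_I$ (as recalled in \s\ref{ss:naive_perm}), it suffices to check that $(\F^{\wt w}_i)_i$ already satisfies the spin condition, i.e.\ that each $\F^{\wt w}_i$ with $i \in I \cup (-I)$ does; spin-permissibility of $\wt w$, and with it the asserted equivalence, then follows at once. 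Thus the entire proof reduces to the computation carried out above.

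Since that computation is the real content, I recall its shape. One treats $i \in I$ explicitly, the case $i \in -I$ being entirely symmetric. Lifting the $n$ spanning vectors \eqref{disp:F_i_spanners} of $\F^{\wt w}_i$ from $\Lambda_i \tensor_{\O_{F_0}} k$ to $\Lambda_i \tensor_{\O_{F_0}} \O_F$ and forming their exterior product, one identifies the result, up to sign and a factor of $2$, with $\pi^q f_{E_\pm}$ for the explicit index set $E_\pm \subset \{1,\dotsc,2n\}$ and exponent $q$ displayed above. Computing the dual set $E_\pm^\perp = (2n+1-E_\pm)^c$ and the corresponding exponent $q^\perp$, one reads off from \eqref{disp:0=<mu=<2} and the basic inequalities (Lemma~\ref{st:basic_ineqs}) that $q \geq q^\perp$. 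If $q > q^\perp$, then, setting $E := E_\pm$, the element $\pi^q f_E \pm \pi^{q-q^\perp}\pi^{q^\perp}\sgn(\sigma_E)f_{E^\perp}$ lies in the eigenlattice $\bigl(\bigwedge_{\O_F}^n(\Lambda_i \tensor_{\O_{F_0}} \O_F)\bigr)_{(-1)^s}$ for a suitable choice of the outer sign, and it reduces modulo $\pi$ to a generator of the line $\bigwedge_k^n \F^{\wt w}_i$; hence $\F^{\wt w}_i$ satisfies the spin condition. If $q = q^\perp$, the basic inequalities force $\mu^{\wt w}_i$ to be self-dual (Lemma~\ref{st:self-dual_mu}), so that $E_+ = E_+^\perp$ and $E_- = E_-^\perp$; since $\sgn(\sigma_{E_+}) = -\sgn(\sigma_{E_-})$, exactly one of $\pi^q f_{E_+}$, $\pi^q f_{E_-}$ lies in the $(-1)^s$-eigenlattice, and it again reduces to a generator of $\bigwedge_k^n \F^{\wt w}_i$, so the spin condition again holds.

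The delicate step, and the one I expect to be the main obstacle, is the borderline case $q = q^\perp$: one must know that $E_\pm$ is genuinely self-dual, so that $f_{E_\pm}$ is (up to sign) an eigenvector of the operator $a$ of \s\ref{ss:spin_cond}, and then pin down $\sgn(\sigma_{E_\pm})$ precisely enough to decide which outer sign lands $\pi^q f_{E_\pm}$ in the $(-1)^s$-eigenlattice rather than the $(-1)^{s+1}$-eigenlattice. Self-duality is guaranteed by Lemma~\ref{st:self-dual_mu} once one notes that $\mu^{\wt w}_i(m+1) = 1$ (Remark~\ref{rk:basic_ineqs_arb_index}), and the sign bookkeeping is exactly what the careful normalization of $a$ in \s\ref{ss:spin_cond}, together with Remark~\ref{rk:a_sign_difference}, is designed to control; with those in hand the argument goes through uniformly in the parity of $s$.
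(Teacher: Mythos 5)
Your proposal is correct and takes essentially the same route as the paper: the forward implication follows from the closed immersion $M_I^\spin \subset M_I^\wedge$, and the converse is the eigenspace computation at the $S$-fixed point $(\F^{\wt w}_i)_i$ carried out in \S\ref{ss:wedge_spin_perm}. One small caveat about your final paragraph: in the case $q = q^\perp$ one does \emph{not} need to pin down $\sgn(\sigma_{E_\pm})$ precisely or decide which sign lands in the $(-1)^s$-eigenlattice; the whole point, as in the case $q > q^\perp$, is that each of the $+1$- and $-1$-eigenlattices contains an element reducing to a generator of $\bigwedge_k^n \F^{\wt w}_i$, so the spin condition holds automatically for either parity of $s$ — the relation $\sgn(\sigma_{E_+}) = -\sgn(\sigma_{E_-})$, rather than the individual signs, is all that matters.
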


\begin{cor}\label{st:toplogical_agreement}
The schemes $M_I^\wedge$ and $M_I^\spin$ have the same reduced underlying subschemes.\qed
\end{cor}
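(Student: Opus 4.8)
The plan is to read the corollary off directly from Proposition~\ref{st:wedge_iff_spin-perm} together with the Schubert-cell description of the special fibers. Since $M_I^\spin$ is a closed subscheme of $M_I^\wedge$ (\S\ref{ss:spin_cond}), and since a closed immersion which induces a bijection on underlying points is automatically a homeomorphism and hence identifies the reduced subschemes of source and target, it suffices to show that $M_I^\spin$ and $M_I^\wedge$ have the same underlying topological space. Both are projective over the discrete valuation ring $\O_F$, so each decomposes into its generic fiber and its special fiber, and the inclusion $M_I^\spin \hookrightarrow M_I^\wedge$, being a morphism over $\Spec \O_F$, respects this decomposition; thus it is enough to compare generic fibers and special fibers separately.

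For the generic fibers there is nothing to prove: $M_I^\spin$ and $M_I^\wedge$ have a common generic fiber, as recalled in \S\ref{ss:spin_cond}. For the special fibers, recall from \S\ref{ss:im_spec_fib} that $L^+P_I$-equivariance of the embedding \eqref{disp:embedding} makes the underlying spaces of $M_{I,k}^\wedge$ and $M_{I,k}^\spin$ unions of Schubert varieties in $\F_I$, so each is determined by the set of Schubert cells it contains. By the discussion of \S\ref{ss:naive_perm} and \S\ref{ss:wedge_spin_perm}, the cell $C_w$ lies in $M_{I,k}^\wedge$ (resp.\ $M_{I,k}^\spin$) exactly when a representative $\wt w$ of $w$ in $\wt W_G/W_{G,I}$ is wedge-permissible (resp.\ spin-permissible). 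Proposition~\ref{st:wedge_iff_spin-perm} asserts that these two conditions are equivalent, so $M_{I,k}^\wedge$ and $M_{I,k}^\spin$ contain precisely the same Schubert cells, hence the same Schubert varieties, and therefore coincide as topological spaces. Combining this with the generic-fiber statement yields the corollary.

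The real content lies entirely in Proposition~\ref{st:wedge_iff_spin-perm} — that every $S$-fixed point of $M_{I,k}^\naive$ attached to a wedge-permissible $\wt w$ already satisfies the spin condition — and the deduction above is purely formal; I do not anticipate any genuine obstacle. The only points that deserve a moment's care are that equality of underlying topological spaces really does yield the asserted equality of reduced subschemes, and that one may carry out the comparison of special fibers cell-by-cell without invoking the (still unproven) topological flatness of $M_I^\wedge$ or of $M_I^\spin$.
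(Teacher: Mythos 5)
Your proposal is correct and follows the same route the paper leaves implicit: the \qed on the corollary signals that it is read off from Proposition~\ref{st:wedge_iff_spin-perm} exactly as you describe, via the common generic fiber, the $L^+P_I$-equivariant Schubert-cell stratification of the special fibers, and the observation that a surjective closed immersion identifies reduced subschemes. Your care about not presupposing topological flatness is also well placed and consistent with the paper's logic.
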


\subsection{Topological flatness of \texorpdfstring{$M_I^\wedge$}{M\_I\^{}wedge} and \texorpdfstring{$M_I^\spin$}{M\_I\^{}spin}}\label{ss:top_flat}

We now come to the main results of the paper.  The key combinatorial fact we shall need in the proof of topological flatness for $M_I^\wedge$ and $M_I^\spin$ is the equivalence of wedge- and spin-permissibility with  \emph{$\mu_{r,s}$-admissibility}, where $\mu_{r,s}$ is the cocharacter \eqref{disp:mu_s}.  Recall that for any cocharacter $\mu$ of $T$, the element $\wt w \in \wt W_G/W_{G,I}$ is \emph{$\mu$-admissible} if there exists  $\sigma \in W_G$ such that $\wt w \leq t_{\sigma\cdot \mu}W_{G,I}$ in the Bruhat order, where $t_\mu$ denotes the image of $\mu$ in $X_*(T)_\Gamma$ regarded as an element in $\wt W_G$.

\begin{thm}\label{st:adm<=>perm_I}
Let $\wt w \in \wt W_G/W_{G,I}$.  Then $\wt w$ is wedge-permissible $\iff$ $\wt w$ is spin-permissible $\iff$ $\wt w$ is $\mu_{r,s}$-admissible.
\end{thm}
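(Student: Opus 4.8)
The plan is to deduce the three-way equivalence from \eqref{st:wedge_iff_spin-perm}, which already identifies wedge- and spin-permissibility, together with a purely combinatorial identification of the wedge-permissible elements of $\wt W_G/W_{G,I}$ with the $\mu_{r,s}$-admissible ones. So everything reduces to showing, for $\wt w \in \wt W_G/W_{G,I}$, that $\wt w$ is wedge-permissible $\iff$ $\wt w$ is $\mu_{r,s}$-admissible.

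The first move is to recast wedge-permissibility as \emph{permissibility}. Let $\bar\mu := \bigl(2^{(s)},1^{(n-2s)},0^{(s)}\bigr)$ be the image of $\mu_{r,s}$ in $X_*(T)_\Gamma$ \eqref{disp:ol_mu_s}. For a naively permissible $\wt w$ the associated vectors $\mu^{\wt w}_i = v_i - \omega_i$ are $\{0,1,2\}$-vectors of sum $n$, and in such a vector the number of $0$'s equals the number of $2$'s; a short partial-sum computation then shows that the condition of \eqref{st:wedge-perm}, namely $\#\{j \mid \mu^{\wt w}_i(j) = 0\} \leq s$ for all $i$, holds exactly when each $\mu^{\wt w}_i$ lies in $\Conv(S_n \cdot \bar\mu)$, the congruence part of permissibility being automatic given naive permissibility. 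In other words, wedge-permissibility of $\wt w$ is exactly $\bar\mu$-permissibility in the sense of Kottwitz-Rapoport \cite{kottrap00}. Since $\mu$-admissibility and $\mu$-permissibility are both compatible with the projection $\wt W_G \surj \wt W_G/W_{G,I}$ and with passage to double cosets (using \eqref{rk:bo_double_cosets}), this reformulation simultaneously lets us assume $I = \{0,\dotsc,m\}$ from now on: it suffices to show that for $\wt w \in \wt W_G$, $\bar\mu$-permissibility is equivalent to $\mu_{r,s}$-admissibility.

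Next I would pass to the symplectic similitude group. By \s\ref{ss:GSp_2m} the embedding $\wt W_G \inj \wt W_{GSp_{2m}}$ respects the affine root hyperplane structure, hence the Bruhat order; and since $\mu^{\wt w}_i(m+1)$ is pinned to $d/2$ by \eqref{rk:basic_ineqs_arb_index}, projecting away the middle coordinate carries the $W_G$-orbit of $\bar\mu$ bijectively onto the $W_{GSp_{2m}}$-orbit of $\nu := \bigl(2^{(s)},1^{(2m-2s)},0^{(s)}\bigr)$ and carries $\Conv(S_n\cdot\bar\mu)$ onto $\Conv(S_{2m}^*\cdot\nu)$. It follows at once that for $\wt w \in \wt W_G$, $\mu_{r,s}$-admissibility matches $\nu$-admissibility in $\wt W_{GSp_{2m}}$ and $\bar\mu$-permissibility matches $\nu$-permissibility. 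So the theorem comes down to the assertion that $\Perm_{GSp_{2m}}(\nu) = \Adm_{GSp_{2m}}(\nu)$. The inclusion $\Adm_{GSp_{2m}}(\nu)\subset\Perm_{GSp_{2m}}(\nu)$ is general; for the reverse I would invoke the theorem of Haines-Ng\^o \cite{hngo02a} describing $\Adm_{GSp_{2m}}$ in terms of permissibility for $GL_{2m}$, and then check directly --- by restricting the explicit partial-sum inequalities defining $GL_{2m}$-permissibility to the $*$-symmetric sublattice cut out by $\wt W_{GSp_{2m}}$ --- that what results is exactly $\Perm_{GSp_{2m}}(\nu)$.

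I expect this last step to be the crux. For general coweights permissibility strictly exceeds admissibility already for $GL_4$, so there is no soft reason for equality; one must exploit the very particular shape $\bigl(2^{(s)},1^{(2m-2s)},0^{(s)}\bigr)$, and the real work is the inequality bookkeeping in the $GL_{2m}$ permissibility criterion intersected with the symplectic symmetry. By contrast the earlier reduction steps --- the reformulation of wedge-permissibility, the parahoric-to-Iwahori descent, and the transfer to $GSp_{2m}$ --- are formal once one has the basic inequalities \eqref{st:basic_ineqs} and the structural facts collected in \s\ref{ss:GSp_2m}.
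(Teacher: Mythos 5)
Your overall skeleton --- reduce to $GSp_{2m}$, characterize wedge-permissibility by partial-sum inequalities, and invoke Haines--Ng\^o --- does match the paper's route.  But there is a conceptual slip at the pivotal step, and it is not innocuous.

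You claim that the partial-sum computation shows wedge-permissibility of $\wt w$ is ``exactly $\bar\mu$-permissibility in the sense of Kottwitz--Rapoport,'' and you accordingly reduce the theorem to the assertion $\Perm_{GSp_{2m}}(\nu) = \Adm_{GSp_{2m}}(\nu)$.  This conflates two genuinely different notions.  What the partial-sum computation (in effect \eqref{st:GL_conv_hull}) shows is that wedge-permissibility is the condition $\mu^{\wt w}_i \in \Conv(S_{2m}\nu)$ at \emph{every} $\omega_i$ for $i \in 2m\ZZ\pm I$ --- i.e.\ it is $GL_{2m}$-permissibility intersected with $\wt W/W_I$, the set $\Perm_{GL_{2m},\pm I}(\nu)\cap\wt W/W_I$.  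Kottwitz--Rapoport $\mu$-permissibility for $GSp_{2m}$ is a condition only at the vertices $\eta_i = (\omega_i + \omega_{-i})/2$ of the $C_m$-alcove (the $\omega_i$ are vertices of the $GL_{2m}$-alcove, not of the symplectic one).  By convexity, wedge-permissibility implies $GSp$-permissibility, but the converse inclusion $\Perm_{GSp_{2m}}(\nu) \subset \Perm_{GL_{2m},\pm I}(\nu)\cap\wt W/W_I$ is not a bookkeeping consequence of anything in Haines--Ng\^o; it is precisely the content of \eqref{st:perm_adm}, which the paper proves \emph{after} the Main Theorem as the advertised byproduct, by a separate contradiction argument (the counting with the sets $E$, $F$, $G$, $H$).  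So your reduction to ``$\Perm_{GSp_{2m}}(\nu) = \Adm_{GSp_{2m}}(\nu)$'' both smuggles in an unproved equivalence and routes the argument through a statement the paper deliberately proves only afterward.

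The correct endgame bypasses $\Perm_{GSp_{2m}}$ entirely: once wedge-permissibility is recognized as $\Perm_{GL_{2m},\pm I}(\nu)\cap\wt W/W_I$, one applies \eqref{st:perm_intersect} to conclude directly that this set equals $\Adm_{GSp_{2m},I}(\nu)$.  That theorem in turn requires more than the Iwahori case of Haines--Ng\^o: G\"ortz's generalization of the $GL$ theorem $\Adm = \Perm$ to arbitrary parahorics, and (to transfer Haines--Ng\^o's Iwahori result to double cosets) the lemma \eqref{st:fprd_lem} on minimal-length representatives in Steinberg fixed-point root data.  Your ``parahoric-to-Iwahori descent'' hand-waves past exactly this last ingredient --- compatibility of admissibility with double-coset quotients across the $GL_{2m}$/$GSp_{2m}$ inclusion is not automatic and is where \eqref{st:fprd_lem} is needed.
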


\begin{proof}
We have already seen the equivalence of wedge-permissibility and spin-permissibility in \eqref{st:wedge_iff_spin-perm}.  We shall show that wedge-permissibility is equivalent to $\mu_{r,s}$-admissibility in \s\ref{s:combinatorics}.
\end{proof}

The notions appearing in the theorem all make sense in an obvious way for double cosets as well: $w \in  W_{G,I}\bs\wt W_G/W_{G,I}$ is respectively \emph{wedge-} or \emph{spin-permissible} if $\wt w$ is wedge- or spin-permissible for one, hence any, representative $\wt w \in \wt W_G/W_{G,I}$; and $w$ is \emph{$\mu$-admissible} if there exists  $\sigma \in W_G$ such that $w \leq W_{G,I}t_{\sigma\cdot \mu}W_{G,I}$ in the Bruhat order.

\begin{cor}\label{st:adm<=>perm_II}
Let  $w \in W_{G,I}\bs\wt W_G/W_{G,I}$.  Then $w$ is wedge-permissible $\iff$ $w$ is spin-permissible $\iff$ $w$ is $\mu_{r,s}$-admissible.
\end{cor}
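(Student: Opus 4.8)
The plan is to deduce \eqref{st:adm<=>perm_II} from the single-coset statement \eqref{st:adm<=>perm_I} by transporting all three notions along the canonical projection $q\colon \wt W_G/W_{G,I} \to W_{G,I}\bs\wt W_G/W_{G,I}$, using only the compatibilities of the Bruhat order on $\wt W_G$ with its coset and double coset quotients recorded in \s\ref{ss:coset_variants} and \eqref{rk:bo_double_cosets}.

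First I would observe that wedge- and spin-permissibility of an element $\wt w \in \wt W_G/W_{G,I}$ depend only on $q(\wt w)$: by construction the Schubert cell in $\F_I$ attached to $\wt w$ is the cell $C_w$ indexed by the double coset $w := q(\wt w)$, and the conditions ``$C_w \subset M_{I,k}^\wedge$'' and ``$C_w \subset M_{I,k}^\spin$'' defining wedge- and spin-permissibility involve only this cell. Thus ``$w$ wedge-permissible'' is literally the condition $C_w \subset M_{I,k}^\wedge$ and ``$w$ spin-permissible'' the condition $C_w \subset M_{I,k}^\spin$, and their equivalence for a double coset follows at once from \eqref{st:wedge_iff_spin-perm} (or \eqref{st:adm<=>perm_I}) applied to any representative $\wt w$.

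It then remains to match wedge-permissibility with $\mu_{r,s}$-admissibility for double cosets. For the forward implication, suppose $w$ is wedge-permissible and pick a representative $\wt w \in \wt W_G/W_{G,I}$; by \eqref{st:adm<=>perm_I} we have $\wt w \leq t_{\sigma\cdot\mu_{r,s}}W_{G,I}$ in $\wt W_G/W_{G,I}$ for some $\sigma \in W_G$, and applying $q$ --- which is order-preserving, by the first assertion of \eqref{rk:bo_double_cosets} together with the description of the relevant Bruhat orders in terms of minimal-length representatives (\s\ref{ss:coset_variants}) --- gives $w \leq W_{G,I}t_{\sigma\cdot\mu_{r,s}}W_{G,I}$, so $w$ is $\mu_{r,s}$-admissible. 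For the converse, suppose $w \leq W_{G,I}t_{\sigma\cdot\mu_{r,s}}W_{G,I}$ for some $\sigma$, and let $\wt w_0 \in \wt W_G$ be the element of minimal length in the double coset $w$. The second assertion of \eqref{rk:bo_double_cosets} gives $\wt w_0 \leq t_{\sigma\cdot\mu_{r,s}}$ in $\wt W_G$, hence $\wt w_0 W_{G,I} \leq t_{\sigma\cdot\mu_{r,s}}W_{G,I}$ in $\wt W_G/W_{G,I}$ (the projection $\wt W_G \to \wt W_G/W_{G,I}$ being order-preserving as well), so $\wt w_0 W_{G,I}$ is $\mu_{r,s}$-admissible and therefore, by \eqref{st:adm<=>perm_I}, wedge-permissible; since $\wt w_0 W_{G,I}$ represents $w$ and wedge-permissibility of a left coset depends only on its image under $q$, we conclude that $w$ is wedge-permissible.

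I do not expect a genuine obstacle here: the whole argument is bookkeeping with the Iwahori--Weyl group and its coset variants, and it uses nothing beyond \eqref{st:adm<=>perm_I} and the Bruhat-order facts of Kottwitz--Rapoport recalled in \s\ref{ss:coset_variants} and \eqref{rk:bo_double_cosets}. The only point demanding a little care is to invoke each of the projections $\wt W_G \to \wt W_G/W_{G,I} \to W_{G,I}\bs\wt W_G/W_{G,I}$, and the passage to minimal-length representatives, in the correct direction --- which is exactly the content of \eqref{rk:bo_double_cosets}.
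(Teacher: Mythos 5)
Your proof is correct and follows exactly the route the paper takes; the paper's proof is simply the one-liner ``the first $\Longleftrightarrow$ is immediate from \eqref{st:adm<=>perm_I}, and the second follows from this and \eqref{rk:bo_double_cosets},'' and your write-up unpacks precisely that: the first equivalence descends trivially to double cosets, the forward direction of the second uses the first assertion of \eqref{rk:bo_double_cosets}, and the converse uses the second assertion via the minimal-length representative.
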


\begin{proof}
The first $\Longleftrightarrow$ is immediate from \eqref{st:adm<=>perm_I}, and the second follows from this and \eqref{rk:bo_double_cosets}.
\end{proof}

\begin{cor}\label{st:top_flat}
The schemes $M_I^\wedge$ and $M_I^\spin$ are topologically flat over $\O_F$.
\end{cor}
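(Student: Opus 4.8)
The plan is to deduce the corollary from \eqref{st:adm<=>perm_II} and the standard description of the special fiber of $M_I^\loc$. First I would record the elementary reduction. The inclusions $M_I^\spin \subseteq M_I^\wedge \subseteq M_I^\naive$ are closed immersions, all four of $M_I^\naive$, $M_I^\wedge$, $M_I^\spin$, $M_I^\loc$ share the common generic fiber, and $M_I^\loc$ is by definition the scheme-theoretic closure of that generic fiber in $M_I^\naive$; hence $M_I^\loc \subseteq M_I^\spin \subseteq M_I^\wedge$ are also closed immersions, and $M_I^\loc$ is equally the scheme-theoretic closure of the common generic fiber inside $M_I^\wedge$. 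Since $\O_F$ is a discrete valuation ring, $M_I^\wedge$ (resp.\ $M_I^\spin$) is topologically flat over $\O_F$ precisely when its generic fiber is dense, i.e.\ precisely when its special fiber lies in the closure of its generic fiber; the underlying space of that closure is $|M_I^\loc|$, which meets the special fiber in $|M_{I,k}^\loc|$. Thus topological flatness of $M_I^\wedge$ and $M_I^\spin$ is equivalent to the reverse of the tautological inclusions $|M_{I,k}^\loc| \subseteq |M_{I,k}^\spin| \subseteq |M_{I,k}^\wedge|$, i.e.\ to the equality $|M_{I,k}^\loc| = |M_{I,k}^\spin| = |M_{I,k}^\wedge|$ (equivalently, since all four schemes share the generic fiber, to the coincidence of the underlying spaces of $M_I^\loc$, $M_I^\spin$, $M_I^\wedge$).

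Next I would identify both ends of this chain. By \eqref{st:toplogical_agreement}, $|M_{I,k}^\wedge| = |M_{I,k}^\spin|$; and by \s\ref{ss:im_spec_fib} this space is a union of Schubert varieties in $\F_I$, with $C_w \subseteq M_{I,k}^\wedge$ exactly when $w$ is wedge-permissible, which by \eqref{st:adm<=>perm_II} holds exactly for the $\mu_{r,s}$-admissible $w \in W_{G,I}\bs\wt W_G/W_{G,I}$. As the $\mu_{r,s}$-admissible set is a union of Bruhat intervals $\{w' : w' \le W_{G,I} t_{\sigma\cdot\mu_{r,s}} W_{G,I}\}$ and is therefore closed downward, and as the Bruhat order controls the closure relations among Schubert cells, we obtain $|M_{I,k}^\wedge| = |M_{I,k}^\spin| = \bigcup_w S_w$, the union over all $\mu_{r,s}$-admissible $w$. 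For the opposite inclusion $|M_{I,k}^\loc| \supseteq \bigcup_w S_w$ I would appeal to the well-known lower bound for the special fiber of the local model: the $S$-fixed points $t_{\sigma\cdot\mu_{r,s}}\cdot\lambda_I$ occur as specializations of $S$-fixed points in the generic fiber, hence lie in $M_{I,k}^\loc(k)$, and $L^+P_I$-stability together with closedness of $M_{I,k}^\loc$ then force $S_w \subseteq M_{I,k}^\loc$ for every $\mu_{r,s}$-admissible $w$; this inclusion is part of the reduction carried out by Pappas and Rapoport and may be quoted from \cite{paprap09}. Combining, $|M_{I,k}^\loc| = |M_{I,k}^\spin| = |M_{I,k}^\wedge|$, which by the first paragraph is exactly the assertion.

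The substantive work is all upstream of this corollary: the equivalence of wedge- (equivalently spin-) permissibility with $\mu_{r,s}$-admissibility --- that is, \eqref{st:adm<=>perm_I}, proved in \s\ref{s:combinatorics} via a concrete description of the admissible set for the cocharacter $\bigl(2^{(s)},1^{(2m-2s)},0^{(s)}\bigr)$ of $GSp_{2m}$. So I expect no genuine obstacle in the corollary itself; the only points requiring care are the density/closure bookkeeping of the first paragraph, which uses that $M_I^\loc$ is the scheme-theoretic closure of the common generic fiber inside $M_I^\naive$ and hence inside $M_I^\wedge$, and locating the precise reference for the lower bound $|M_{I,k}^\loc| \supseteq \bigcup_w S_w$.
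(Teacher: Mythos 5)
Your argument is correct and is essentially the paper's proof with the bookkeeping made explicit: you reduce topological flatness to the set-theoretic equality $|M_{I,k}^\loc| = |M_{I,k}^\spin| = |M_{I,k}^\wedge|$, identify the right-hand spaces with the union of Schubert varieties over the $\mu_{r,s}$-admissible set via \eqref{st:adm<=>perm_II}, and quote \cite{paprap09} for the lower bound $\bigcup_w S_w \subseteq |M_{I,k}^\loc|$. The paper's proof does exactly this (citing \cite{paprap09}*{3.1} for the lower bound, with the small remark that the admissible elements of $\wt W_G$ surject onto those of $W_{G,I}\backslash\wt W_G/W_{G,I}$ to handle the general parahoric case).
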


\begin{proof}
Since the set of $\mu_{r,s}$-admissible elements in $\wt W_G$ surjects onto the set of $\mu_{r,s}$-admissible elements in $W_{G,I}\bs\wt W_G/W_{G,I}$, \cite{paprap09}*{3.1} shows exactly that the Schubert cells $C_w$ indexed by $\mu_{r,s}$-admissible $w$ are contained in $M_{I,k}^\loc$ (taking note that, set-theoretically, $M_{I,k}^\loc$ is itself a union of Schubert cells).  We now get exactly what we need from \eqref{st:adm<=>perm_II}.
\end{proof}

\section{Combinatorics}\label{s:combinatorics}

In this final section we prove the equivalence of wedge-permissibility with $\mu_{r,s}$-admissibility needed in \eqref{st:adm<=>perm_I}, as well as the equivalence of these notions with Kottwitz and Rapoport's notion of $\mu_{r,s}$-permissibility (see \s\ref{ss:perm}).  As before, we fix a nonempty subset $I \subset \{0,\dotsc,m\}$.  To lighten notation, we set $\wt W := \wt W_{GSp_{2m}} = X_* \rtimes S_{2m}^*$.

\subsection{Formulation in terms of \texorpdfstring{$GSp_{2m}$}{GSp\_2m}}\label{ss:transfer_to_GSp_2m}
In this subsection we use the embedding $\wt W_G \inj \wt W$ from \s\ref{ss:GSp_2m} to transfer the problem of proving the equivalence between wedge-permissibility and $\mu_{r,s}$-admissibility to an equivalent problem for $GSp_{2m}$.

Changing notation from \eqref{disp:omega_i}, we now denote by $\omega_i$ the vector
\[
   \omega_i := \bigl((-1)^{(c)},0^{(2m-c)}\bigr) - \mathbf b
\]
for $i = 2mb + c$ with $b \in \ZZ$ and $0 \leq c < 2m$.  We write $W_I$ for the stabilizer in $\wt W$ of all the vectors $\omega_i$ with $i \in 2m\ZZ \pm I$; this is the image in $\wt W$ of $W_{G,I}$.  We say that $\wt w \in \wt W /W_I$ is \emph{wedge-permissible} if for all $i \in 2m\ZZ \pm I$,
\begin{enumerate}
\renewcommand{\theenumi}{P$'$\arabic{enumi}}
\item\label{it:naive_perm_1}
   $\mathbf 0 \leq \wt w \cdot \omega_i - \omega_i \leq \mathbf 2$;
\item\label{it:naive_perm_2}
   $\Sigma(\wt w \cdot \omega_i - \omega_i) = 2m$; and
\item\label{it:wedge_perm}
   $\#\{\, j\mid (\wt w \cdot \omega_i - \omega_i)(j) = 0\, \} \leq s$.
\end{enumerate}
Trivially, the wedge-permissible elements in $\wt W$ are just the images of the wedge-permissible elements in $\wt W_G$ under the embedding $\wt W_G \inj \wt W$.  It is now clear from \eqref{disp:ol_mu_s} and from the discussion in \s\ref{ss:GSp_2m} that our problem is to show the following.

\begin{thm}\label{st:wedge_equiv_adm}
Let  $\wt w \in \wt W/W_I$.  Then $\wt w$ is wedge-permissible in $\wt W/W_I$ $\iff$ $\wt w$ is $\mu$-admissible in $\wt W/W_I$ for $\mu$ the coweight $\bigl(2^{(s)},1^{(2m-2s)},0^{(s)}\bigr) \in \ZZ^{2m}$.
\end{thm}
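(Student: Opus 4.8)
The strategy is to prove Theorem~\ref{st:wedge_equiv_adm} by showing that, inside $\wt W = \wt W_{GSp_{2m}}$, \emph{both} the wedge-permissible set and the $\mu$-admissible set coincide with the $GL_{2m}$-permissible set for the coweight $\mu := \bigl(2^{(s)},1^{(2m-2s)},0^{(s)}\bigr)$. Recall from Kottwitz--Rapoport \cite{kottrap00} that, regarding $\mu$ as a coweight of $GL_{2m}$, a coset $\wt w W_I$ (with $\wt w \in \wt W$) is \emph{$\mu$-permissible for $GL_{2m}$} if $\wt w \cdot \omega_i - \omega_i \in \Conv(S_{2m}\cdot\mu)$ for every $i \in 2m\ZZ \pm I$. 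Since the entry-sum is constant on $S_{2m}\cdot\mu$, this already forces $\Sigma(\wt w\cdot\omega_i - \omega_i) = 2m$; and since the coordinate functions on $\Conv(S_{2m}\cdot\mu)$ take values in $[0,2]$, together with integrality of $\wt w\cdot\omega_i - \omega_i$ it forces $\mathbf 0 \leq \wt w\cdot\omega_i - \omega_i \leq \mathbf 2$. Thus a $GL_{2m}$-permissible coset is automatically naively permissible in the sense of \s\ref{ss:transfer_to_GSp_2m}, and the equivalence with wedge-permissibility becomes a purely combinatorial question about which integral vectors lie in $\Conv(S_{2m}\cdot\mu)$.

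The combinatorial heart is an explicit description of $\Conv(S_{2m}\cdot\mu)$ by partial-sum inequalities. Writing $x := \wt w\cdot\omega_i - \omega_i$, so that $x \in \ZZ^{2m}$, $\mathbf 0 \leq x \leq \mathbf 2$, and $\Sigma x = 2m$, I would first note that $x$ has exactly as many $0$'s as $2$'s (because $\Sigma x = 2m$), and then run through the inequalities cutting out $\Conv(S_{2m}\cdot\mu)$: the sum of any $k$ coordinates of $x$ must be at most the sum of the $k$ largest entries of $\mu$, namely $\min(2k,\,k+s,\,2m)$, and dually at least the sum of the $k$ smallest. The bounds contributed by $2k$ and by $2m$ are vacuous for vectors with coordinates in $[0,2]$ and entry-sum $2m$, so the only surviving constraints are $\#\{\,j \mid x(j) = 0\,\} \leq s$ and $\#\{\,j \mid x(j) = 2\,\} \leq s$, and these are equivalent to one another given $\Sigma x = 2m$. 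Comparing with the defining condition \eqref{it:wedge_perm} of wedge-permissibility in \s\ref{ss:transfer_to_GSp_2m}, this shows that $\wt w W_I$ is wedge-permissible if and only if $\wt w\cdot\omega_i - \omega_i \in \Conv(S_{2m}\cdot\mu)$ for all $i \in 2m\ZZ \pm I$, i.e.\ if and only if $\wt w W_I$ is $\mu$-permissible for $GL_{2m}$.

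To close the loop I would invoke the theorem of Haines--Ng\^o \cite{hngo02a} describing the $\mu$-admissible set for $GSp_{2m}$ in terms of $GL_{2m}$-permissibility; for our $\mu$ this identifies $\Adm_{GSp_{2m}}(\mu)$ with $\Perm_{GL_{2m}}(\mu)$ as subsets of $\wt W$, and likewise at parahoric level $I$ after the standard reduction to the Iwahori case, which rests on the compatibility of both admissible and permissible sets with the projections $\wt W/W_J \to \wt W/W_I$ (Kottwitz--Rapoport \cite{kottrap00}). Combined with the previous paragraph, this proves Theorem~\ref{st:wedge_equiv_adm}. The advertised byproduct — that $\mu$-admissibility and $\mu$-permissibility for $GSp_{2m}$ agree — then falls out of the chain
\[
   \Adm_{GSp_{2m}}(\mu) \subseteq \Perm_{GSp_{2m}}(\mu) \subseteq \Perm_{GL_{2m}}(\mu) = \Adm_{GSp_{2m}}(\mu),
\]
whose first inclusion is the general result of Kottwitz--Rapoport and whose second holds because $\Conv(S_{2m}^*\cdot\mu) \subseteq \Conv(S_{2m}\cdot\mu)$.

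I expect two points to require care. The more routine but fiddly one is the partial-sum bookkeeping for $\Conv(S_{2m}\cdot\mu)$ together with its interplay with the periodicity, duality, and face structure of \s\ref{s:schub_cells}: one must be sure that the pointwise conditions on the various vectors $\wt w\cdot\omega_i - \omega_i$ are exactly what wedge-permissibility demands, and that passing between $GSp_{2m}$ (with Weyl group $S_{2m}^*$ and the self-duality $\mu + \mu^* = \mathbf 2$) and the ambient $GL_{2m}$ introduces no discrepancy. The genuinely substantial input, which I would treat as a black box, is the Haines--Ng\^o description of $\Adm_{GSp_{2m}}(\mu)$; the accompanying subtlety is the passage from arbitrary parahoric level $I$ to the Iwahori level on the admissibility side, which relies on the coset Bruhat-order facts recorded in \eqref{rk:bo_double_cosets} (and their single-coset analogue).
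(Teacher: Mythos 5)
Your overall strategy mirrors the paper's: identify wedge-permissibility with $GL_{2m}$-permissibility by an explicit description of $\Conv(S_{2m}\mu)$ via partial-sum inequalities (this is the content of Lemma~\eqref{st:GL_conv_hull} plus a short computation, and your bookkeeping is correct), and then invoke the Haines--Ng\^o identification $\Adm_{GSp_{2m}}(\mu)=\Perm_{GL_{2m}}(\mu)\cap\wt W$, extended to parahoric level (the content of Theorems~\eqref{st:adm_intersect} and \eqref{st:perm_intersect}). However, there is a genuine gap in the step you describe as ``the standard reduction to the Iwahori case.'' The Kottwitz--Rapoport facts recorded in \eqref{rk:bo_double_cosets} do give compatibility of the Bruhat order with the projections $\wt W \to \wt W/W_I$, but they are not enough. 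The crux of the parahoric reduction for $\Adm_{GL_{2m},\pm I}(\mu)\cap\wt W/W_I\subseteq\Adm_{GSp_{2m},I}(\mu)$ is the following: if $w\in\wt W_{GL_{2m}}/W_{GL_{2m},\pm I}$ meets $\wt W$, then the minimal-length representative $\wt w_0$ of $w$ in $\wt W_{GL_{2m}}$ must itself lie in $\wt W$, so that one may apply the Iwahori-level Haines--Ng\^o statement to $\wt w_0$. This is precisely Lemma~\eqref{st:fprd_lem} on Steinberg fixed-point root data, which the paper proves as a new ingredient; it is not a consequence of the projection-compatibility facts you cite, and without it the reduction does not go through.

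A second, more serious error appears in your ``byproduct'' chain. The inclusion $\Perm_{GSp_{2m}}(\mu)\subseteq\Perm_{GL_{2m}}(\mu)$ does \emph{not} follow from $\Conv(S_{2m}^*\mu)\subseteq\Conv(S_{2m}\mu)$, because $GSp$-permissibility is tested at the points $\eta_i=(\omega_i+\omega_{-i})/2$ of the symplectic alcove, whereas $GL$-permissibility is tested at all the $\omega_j$. These are genuinely different conditions, and indeed Haines--Ng\^o give examples (cited in \S\ref{ss:perm}) of cocharacters for which $\Perm_{GSp_{2m}}\not\subseteq\Perm_{GL_{2m}}\cap\wt W$, so your claimed inclusion is false in general. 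The paper proves the corresponding Proposition~\eqref{st:perm_adm} by a direct argument using the basic inequalities and the specific shape of $\mu$, not by any convex-hull monotonicity. Since Theorem~\eqref{st:wedge_equiv_adm} itself does not depend on the byproduct, this second issue does not invalidate your main argument, but the first gap (missing Lemma~\eqref{st:fprd_lem}) does need to be filled.
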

 
Of course, here the Bruhat order on $\wt W$ is taken with respect to the alcove
\begin{equation}\label{disp:GSp_alc}
   \biggl\{\, (x_1,\dotsc,x_{2m}) \in \RR^{2m} \biggm|
      \begin{varwidth}{\textwidth}
         \centering
         $x_1 + x_{2m} = \dotsb = x_m + x_{m+1}$ and\\
         $x_{2m}-1 < x_1 < x_2 < \dotsb < x_m < x_{m+1}$
      \end{varwidth}
      \,\biggr\},
\end{equation}
or in other words, the alcove contained in the Weyl chamber opposite the standard positive chamber and whose closure contains the origin.  We shall complete the proof of the theorem in \s\ref{ss:wedge-perm_mu-adm}.

\subsection{A lemma on Steinberg fixed-point root data}\label{ss:steinberg_fprd}
The key input we shall use to establish \eqref{st:wedge_equiv_adm} is a theorem of Haines and Ng\^o which describes admissible sets for $GSp_{2m}$.  Strictly speaking, their theorem applies only to the Iwahori case, and the aim of this subsection is to prove a general lemma which will aid us in extending their result to the general parahoric case.

We shall formulate our lemma in the setting of general Steinberg fixed-point root data.  Changing notation, in this subsection (and in \s\ref{ss:perm}) we use the symbols $A$, $\wt W$, and $X_*$ to denote objects attached to an arbitrary based root datum; in all other subsections we shall resume using these symbols for their original meanings.  We take as our main references the papers of Steinberg \cite{st68}, Kottwitz and Rapoport \cite{kottrap00}, and Haines and Ng\^o \cite{hngo02a}, especially \cite{hngo02a}*{\s9}.

Let us briefly recall what we need from the theory of Steinberg fixed-point root data.  Let $\R = (X^*,X_*,R,R^\vee,\Pi)$ be a reduced and irreducible based root datum.  Attached to \R are its Weyl group $W$, its affine Weyl group $W_\aff$, and its extended affine Weyl group $\wt W := X_* \rtimes W$.  The simple roots $\Pi$ determine a distinguished alcove $A$ in the apartment $X_*\tensor_\ZZ \RR$, namely the unique alcove contained in the positive Weyl chamber and whose closure contains the origin.  The extended affine Weyl group then admits a second semidirect product decomposition $\wt W = W_\aff \rtimes \Omega$, where $\Omega$ is the stabilizer in $\wt W$ of $A$.  The length function $\ell$ on $W_\aff$ determined by $A$ extends to $\wt W$ via the rule $\ell(wx) = \ell(w)$ for $w \in W_\aff$ and $x \in \Omega$.

An \emph{automorphism} $\Theta$ of $\R$ is an automorphism $\Theta$ of the abelian group $X_*$ such that the subsets $R$, $\Pi \subset X^*$ are stable under the dual automorphism $\Theta^*$ of $X^*$ induced by $\Theta$ and the perfect pairing $X^* \times X_* \to \ZZ$.  It follows that any automorphism of $\R$ induces automorphisms of $W$, $W_\aff$, and $\wt W$, and that these induced automorphisms preserve the length functions on these groups.  Attached to $\Theta$ is the \emph{Steinberg fixed-point root datum} $\R^{[\Theta]} = (X^{*[\Theta]}, X_*^{[\Theta]}, R^{[\Theta]}, R^{\vee [\Theta]}, \Pi^{[\Theta]})$; this is a reduced and irreducible based root datum described explicitly in \cite{hngo02a}*{\s9}.  We systematically use a superscript $[\Theta]$ to denote the analogs for $\R^{[\Theta]}$ of all the objects defined for $\R$.  For our purposes, we shall just mention that $\wt W^{[\Theta]}$ is naturally a subgroup of $\wt W$, with $W_\aff^{[\Theta]}$ equal to the fixed-point subgroup $W^\Theta_\aff \subset W_\aff$ and $\Omega^{[\Theta]}$ equal to the intersection $\wt W^{[\Theta]} \cap \Omega$.

%

\begin{lem}\label{st:fprd_lem}
Let $J$ and $J'$ be $\Theta$-stable subsets of simple reflections in $W_\aff$, and let $W_J$ and $W_{J'}$ denote the respective subgroups they generate.  
Suppose that $w \in W_J\bs \wt W / W_{J'}$ is a double coset that meets $\wt W^{[\Theta]}$, and let $\wt w_0$ denote the unique representative of minimal length in $\wt W$ of $w$.  Then $\wt w_0 \in \wt W^{[\Theta]}$.
\end{lem}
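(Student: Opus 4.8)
The plan is to exploit that $\Theta$ acts on $\wt W = W_\aff \rtimes \Omega$ as a \emph{length-preserving} automorphism which stabilizes the parahoric subgroups $W_J$ and $W_{J'}$; an automorphism with these properties necessarily fixes the unique minimal-length representative of any double coset it carries to itself, and the hypothesis is precisely that $w$ is such a double coset. The structural facts I would use are the following. Since $\Theta$ is an automorphism of the based root datum, $\Theta^*$ preserves $\Pi$, hence the positive Weyl chamber, and $\Theta$ fixes the origin, so $\Theta$ fixes the base alcove $A$; consequently $\Theta$ stabilizes $W_\aff$ and $\Omega$ individually and preserves the length function $\ell$ on all of $\wt W$, as recalled above. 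Because $J$ and $J'$ are $\Theta$-stable, $\Theta(W_J) = W_J$ and $\Theta(W_{J'}) = W_{J'}$, so $\Theta$ permutes the double cosets $W_J \bs \wt W / W_{J'}$; as it preserves $\ell$ and each such double coset contains a unique element of minimal length (see \s\ref{ss:coset_variants}), $\Theta$ sends the minimal-length representative of any double coset to that of its image.

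I would then feed in the hypothesis. Choose $\wt v \in w \cap \wt W^{[\Theta]}$. By the construction of the Steinberg fixed-point root datum, $\wt W^{[\Theta]}$ is contained in the $\Theta$-fixed subgroup $\wt W^\Theta$ (compare \cite{hngo02a}*{\s9} and the identifications recalled above), so $\Theta(\wt v) = \wt v$ and hence $\Theta(w) = W_J\,\Theta(\wt v)\,W_{J'} = w$. Combined with the previous paragraph and the uniqueness of $\wt w_0$, this gives $\Theta(\wt w_0) = \wt w_0$. It then remains to check that this $\Theta$-fixed element lies inside the subgroup $\wt W^{[\Theta]} = W_\aff^{[\Theta]} \rtimes \Omega^{[\Theta]}$, not merely in $\wt W^\Theta$. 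Write $\wt w_0 = yx$ with $y \in W_\aff$ and $x \in \Omega$. On one hand, $\Theta(\wt w_0) = \wt w_0$, together with the $\Theta$-stability of $W_\aff$ and $\Omega$ and the uniqueness of this decomposition, forces $\Theta(y) = y$, so $y \in W_\aff^\Theta = W_\aff^{[\Theta]}$. On the other hand, $W_J$ and $W_{J'}$ are subgroups of $W_\aff$, so the projection homomorphism $\wt W \surj \wt W/W_\aff \iso \Omega$ is constant on the double coset $w$; hence the image $x$ of $\wt w_0$ in $\Omega$ equals the image of $\wt v$, which lies in $\Omega^{[\Theta]}$ since $\wt v \in \wt W^{[\Theta]}$. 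Therefore $\wt w_0 = yx \in W_\aff^{[\Theta]} \rtimes \Omega^{[\Theta]} = \wt W^{[\Theta]}$, as desired.

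I do not expect a serious obstacle: at bottom the argument is the principle that an automorphism fixes the unique distinguished point of any orbit it stabilizes, applied to minimal-length double coset representatives. The two points that require a little care are (i) that $\Theta$ preserves length on the \emph{extended} affine Weyl group $\wt W$, not only on $W_\aff$ --- this is part of the general theory of automorphisms of based root data recalled above --- and (ii) that the $\Theta$-fixed element produced actually sits inside the subgroup $\wt W^{[\Theta]}$ rather than just in $\wt W^\Theta$, which is dealt with by the observation that the $\Omega$-component of every element of $w$ is already determined by the given element of $\wt W^{[\Theta]}$.
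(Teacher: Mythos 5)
There is a genuine gap: the inclusion $\wt W^{[\Theta]} \subset \wt W^\Theta$ that you invoke is false, and with it the deduction $\Theta(w) = w$ collapses. From \S\ref{ss:steinberg_fprd} we have $\wt W^{[\Theta]} = W_\aff^{[\Theta]} \rtimes \Omega^{[\Theta]}$ with $W_\aff^{[\Theta]} = W_\aff^\Theta$ but $\Omega^{[\Theta]} = \wt W^{[\Theta]}\cap\Omega$, and the latter is in general strictly larger than $\Omega^\Theta$. In the very case this lemma is applied to (\S\ref{ss:hngo}), $\R$ is the $GL_{2m}$ root datum, $\Theta = -w_0$, and $\R^{[\Theta]}$ is the $GSp_{2m}$ root datum; here $\Theta$ acts on $\Omega \cong X_*/Q^\vee \cong \ZZ$ by $-1$ (the map $\Sigma$ changes sign under $x\mapsto -w_0 x$), so $\Omega^\Theta = \{1\}$, whereas $\Omega^{[\Theta]} \cong \Omega_{GSp_{2m}} \cong \ZZ$. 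Thus $\wt W^{[\Theta]} \not\subset \wt W^\Theta$: an element $\wt v = vy$ with $v\in W_\aff^\Theta$, $y\in\Omega^{[\Theta]}$ and $y\neq 1$ has $\Theta(\wt v) = v\,\Theta(y)$ lying in a \emph{different} $W_\aff$-coset than $\wt v$, and since $W_J,W_{J'}\subset W_\aff$, the double coset $\Theta(w)$ is then genuinely different from $w$. So $\Theta$ does not stabilize $w$, and your step ``$\Theta(\wt w_0)=\wt w_0$'' is unjustified; this is exactly what you need to get $y = w_0 \in W_\aff^\Theta$, which is the nontrivial half of the conclusion. (Your second half, identifying the $\Omega$-component $x_0$ of $\wt w_0$ with that of $\wt v$ via the projection $\wt W\twoheadrightarrow\Omega$, is correct and is also how the paper handles $x_0$.)

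The overall idea---a length-preserving automorphism fixes the distinguished minimal-length representative of any orbit it stabilizes---is indeed the engine of the paper's proof as well, but the paper applies $\Theta$ not to $W_J\wt w_0 W_{J'}$ directly but to a shifted double coset inside the Coxeter group $W_\aff$. Writing $\wt w_0 = w_0x_0$ and first pinning down $x_0\in\Omega^{[\Theta]}$ as you do, it rewrites
\[
W_J\wt w_0 W_{J'} = W_J w_0 W_{x_0 J' x_0^{-1}}\cdot x_0,
\]
observes that $x_0 J' x_0^{-1}$ is again a $\Theta$-stable set of simple reflections, and notes that $W_J w_0 W_{x_0 J' x_0^{-1}}$ and its $\Theta$-image both contain the $\Theta$-\emph{fixed} element $w\in W_\aff^\Theta$ (the $W_\aff$-component of $\wt v$, which lies in $W_\aff^{[\Theta]} = W_\aff^\Theta$). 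Hence this $W_\aff$-double coset \emph{is} $\Theta$-stable, and the uniqueness of the minimal-length representative gives $w_0 = w_0^\Theta$. Your argument skips this shift and so ends up needing $\Theta$-stability of $w$ itself, which fails.
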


\begin{proof}
Say that $\wt w_0 = w_0 x_0$ with $w_0 \in W_\aff$ and $x_0 \in \Omega$.  We'll show that $w_0 \in W_\aff^{[\Theta]}$ and $x_0 \in \Omega^{[\Theta]}$.

By assumption $W_J \wt w_0 W_{J'} \cap \wt W^{[\Theta]}$ contains some element $\wt w$; say $\wt w = wx$ with $w \in W_\aff^{\Theta}$ and $x \in \Omega^{[\Theta]}$.  Since $\wt w_0$ and $\wt w$ are evidently congruent mod $W_\aff$, we have $x_0 = x$ and $x_0 \in \Omega^{[\Theta]}$.

Now observe that
\[
   W_J \wt w_0 W_{J'} = W_J w_0 x_0 W_{J'} = W_J w_0 W_{\smash[t]{x_0J'x_0^{-1}}}x_0.
\]
Since $x_0 \in \Omega^{[\Theta]}$, the set $x_0J'x_0^{-1}$ is again $\Theta$-stable.  Since $w$ is $\Theta$-fixed, the double cosets $W_J w_0 W_{x_0J'x_0^{-1}}$ and $(W_J w_0 W_{x_0J'x_0^{-1}})^\Theta = W_J w_0^\Theta W_{x_0J'x_0^{-1}}$ both contain $w$, hence are equal.  But $\ell(w_0) = \ell(w_0^\Theta)$, and to say that $\wt w_0$ is of minimal length in $W_J \wt w_0 W_{J'}$ is precisely to say that $w_0$ is of minimal length in $W_J w_0 W_{x_0J'x_0^{-1}}$.  Hence $w_0 = w_0^\Theta$ by uniqueness of the representative of minimal length, and $w_0 \in W_\aff^{\Theta}$, as desired.
\end{proof}

\subsection{A theorem of Haines and Ng\^o}\label{ss:hngo}
We return to our earlier notation for $X_*$ and $\wt W$, where these respectively denote the cocharacter lattice and extended affine Weyl group for the root datum of $GSp_{2m}$.  This root datum is a Steinberg fixed-point root datum obtained from the root datum for $GL_{2m}$, and we'll need the Iwahori-Weyl group $\wt W_{GL_{2m}} := \ZZ^{2m} \rtimes S_{2m}$ and the natural embedding $\wt W \inj \wt W_{GL_{2m}}$ for the theorem of Haines and Ng\^o.

Actually, it will be convenient for us to split their theorem into two parts.%
\footnote{Though the way in which we shall do so is not reflective of how they \emph{prove} the theorem.}
For both parts, we'll need the alcove for $GL_{2m}$ in $\RR^{2m}$ determined by the vectors $\omega_i$ for $i \in \ZZ$.  We denote by $W_{GL_{2m},\pm I}$ the stabilizer in $\wt W_{GL_{2m}}$ of all the $\omega_i$ for $i \in 2m\ZZ \pm I$.  Then $W_{GL_{2m},\pm I}$ is generated by the reflections across the walls of the base alcove that contain all the $\omega_i$ for $i \in 2m\ZZ \pm I$; and $W_I = \wt W \cap W_{GL_{2m},\pm I}$, so that $\wt W/W_I \inj \wt W_{GL_{2m}}/ W_{GL_{2m},\pm I}$.

Our base alcove determines a Bruhat order on $\wt W_{GL_{2m}}$, and for any cocharacter $\mu \in \ZZ^{2m}$, we let $\Adm_{GL_{2m},\pm I}(\mu)$ denote the set of all $\mu$-admissible elements in $\wt W_{GL_{2m}}/W_{GL_{2m},\pm I}$.  When $\mu \in X_*$, we may also consider the set $\Adm_{GSp_{2m},I}(\mu)$ of all $\mu$-admissible elements in $\wt W/W_I$.  In the Iwahori case $I = \{0,\dotsc,m\}$, we write just $\Adm_{GL_{2m}}(\mu)$ and $\Adm(\mu)_{GSp_{2m}}$ in place of $\Adm_{GL_{2m},\pm\{0,\dotsc,m\}}(\mu)$ and $\Adm_{GSp_{2m},\{0,\dotsc,m\}}(\mu)$, respectively.  We now have the first part of the theorem of Haines and Ng\^o, generalized to the general parahoric case.  

\begin{thm}\label{st:adm_intersect}
Let $\mu\in X_*$ be any cocharacter for $GSp_{2m}$.  Then
\[
   \Adm_{GSp_{2m},I}(\mu) = \Adm_{GL_{2m},\pm I}(\mu) \cap \wt W/W_I.
\]
\end{thm}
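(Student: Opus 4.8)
The plan is to deduce the general parahoric statement from its Iwahori case --- which is the theorem of Haines and Ng\^o \cite{hngo02a} --- by passing through \eqref{st:fprd_lem}. First I would set up the fixed-point picture: the root datum of $GSp_{2m}$ is the Steinberg fixed-point datum $\R^{[\Theta]}$ of the root datum $\R$ of $GL_{2m}$ for a suitable involution $\Theta$, so that, in the notation of \s\ref{ss:steinberg_fprd}, $\wt W$ sits inside $\wt W_{GL_{2m}}$ as $\wt W^{[\Theta]}$, with affine part $W_{\aff,GL_{2m}}^\Theta$. Since the index set $2m\ZZ \pm I$ is symmetric under $i \mapsto -i$ --- the symmetry along which $\Theta$ permutes the vectors $\omega_i$ --- the subgroup $W_{GL_{2m},\pm I}$ of \s\ref{ss:hngo} is generated by a $\Theta$-stable set $J'$ of simple affine reflections of $GL_{2m}$, and $W_I = \wt W \cap W_{GL_{2m},\pm I}$ is exactly the $\Theta$-fixed subgroup of $W_{J'}$; in particular the embedding $\wt W/W_I \inj \wt W_{GL_{2m}}/W_{GL_{2m},\pm I}$ identifies $\wt W/W_I$ with the set of cosets that meet $\wt W$. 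Finally I would record the Iwahori input: $\Adm(\mu)_{GSp_{2m}} = \Adm_{GL_{2m}}(\mu) \cap \wt W$ for $\mu \in X_*$.

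A point worth stressing is that the Bruhat orders on $\wt W$ and on $\wt W_{GL_{2m}}$ are genuinely different --- the affine Weyl group of $GSp_{2m}$ is a proper reflection subgroup of that of $GL_{2m}$, so the two length functions disagree on $\wt W$ --- so ``$\mu$-admissible'' must always be read relative to the ambient group, and the statement is not a tautology. The only formal inputs I would use, applied in each group separately, are the two facts about parahoric quotients recorded in \eqref{rk:bo_double_cosets} (there for double cosets, hence a fortiori for cosets, and valid in any extended affine Weyl group by \cite{kottrap00}*{8.3}): that $\wt x \le \wt y$ implies $\wt x W_\bullet \le \wt y W_\bullet$, and that if $\wt x W_\bullet \le \wt y W_\bullet$ with $\wt x$ of minimal length in its coset then $\wt x \le \wt y$. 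A consequence I would record first: $w$ is $\mu$-admissible in $\wt W/W_I$ if and only if its minimal-length representative in $\wt W$ is $\mu$-admissible in $\wt W$, and likewise for $GL_{2m}$.

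With this in hand I would prove the two inclusions. For $\Adm_{GSp_{2m},I}(\mu) \subseteq \Adm_{GL_{2m},\pm I}(\mu) \cap \wt W/W_I$: take $w$ in the left-hand side, let $\wt v_0 \in \wt W$ be its minimal-length representative, so $\wt v_0 \in \Adm(\mu)_{GSp_{2m}} = \Adm_{GL_{2m}}(\mu) \cap \wt W$ by the Iwahori case; thus $\wt v_0 \le t_{\tau\mu}$ in $\wt W_{GL_{2m}}$ for some $\tau \in S_{2m}$, and since $\wt v_0$ lies in the $GL_{2m}$-coset of $w$, projecting in $\wt W_{GL_{2m}}$ gives $w \le t_{\tau\mu}W_{GL_{2m},\pm I}$, i.e.\ $w \in \Adm_{GL_{2m},\pm I}(\mu)$. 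For the reverse inclusion: take $w \in \Adm_{GL_{2m},\pm I}(\mu) \cap \wt W/W_I$ and let $\wt w_0 \in \wt W_{GL_{2m}}$ be the minimal-length representative of its $GL_{2m}$-coset, so $\wt w_0 \in \Adm_{GL_{2m}}(\mu)$; because that coset meets $\wt W$, \eqref{st:fprd_lem} --- applied with $\R$, $\Theta$ as above, $J = \emptyset$, and $J'$ as above --- gives $\wt w_0 \in \wt W$, whence $\wt w_0 \in \Adm_{GL_{2m}}(\mu) \cap \wt W = \Adm(\mu)_{GSp_{2m}}$ and so $\wt w_0 \le t_{\sigma\mu}$ in $\wt W$ for some $\sigma \in S_{2m}^*$; since $\wt w_0 \in \wt W \cap wW_{GL_{2m},\pm I} = wW_I$, projecting in $\wt W$ yields $w \le t_{\sigma\mu}W_I$, i.e.\ $w \in \Adm_{GSp_{2m},I}(\mu)$.

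The substantive step, and the one I expect to be the only real obstacle, is the reverse inclusion, and it is localized entirely in \eqref{st:fprd_lem}: one needs the minimal-length representative --- computed in $\wt W_{GL_{2m}}$ with the $GL_{2m}$-length --- of a parahoric coset that meets $\wt W$ to lie in $\wt W$, which is false for an arbitrary representative and is precisely what that lemma supplies. I would also flag the one thing that could look like a gap but is not: this $\wt w_0$ need not be the minimal-length representative of the associated $\wt W$-coset for the coarser $GSp_{2m}$-length; but that is irrelevant, since once $\wt w_0$ is known to lie in $\wt W$ and to be $GL_{2m}$-admissible, the Iwahori case of Haines and Ng\^o promotes it to a $GSp_{2m}$-admissible element of $\wt W$, after which a plain projection completes the argument.
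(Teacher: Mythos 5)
Your proposal is correct and takes essentially the same route as the paper: the Iwahori case from Haines--Ng\^o, then Lemma \ref{st:fprd_lem} to place the $GL_{2m}$-minimal-length representative inside $\wt W$, combined with the Bruhat-order facts of \eqref{rk:bo_double_cosets} to pass between a parahoric coset and its minimal-length representative. The only cosmetic difference is that in the easy inclusion the paper cites the Kottwitz--Rapoport result that $\wt W$ inherits its Bruhat order from $\wt W_{GL_{2m}}$ directly, whereas you invoke the full Iwahori-case equality; these amount to the same thing here.
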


\begin{proof}
In the Iwahori case, this is just the combination of Theorem 1 and Proposition 5 in \cite{hngo02a}.  We deduce the general case from this and \eqref{st:fprd_lem}.  If $w \in \wt W/W_I$ is $\mu$-admissible, then choose a lift $\wt w \in \wt W$ which is $\mu$-admissible.  By a theorem of Kottwitz and Rapoport \cite{kottrap00}*{1.8, 2.3}, $\wt W$ inherits its Bruhat order from $\wt W_{GL_{2m}}$, whence $\wt w \in \Adm_{GL_{2m}}(\mu) \cap \wt W$ and $w \in \Adm_{GL_{2m},\pm I}(\mu) \cap \wt W/W_I$.  Conversely, suppose $w \in \Adm_{GL_{2m},\pm I}(\mu) \cap \wt W /W_I$.  Let $\wt w_0 \in \wt W_{GL_{2m}}$ denote the minimal length representative of $w$.  Then $\wt w_0$ is contained in $\wt W$ by \eqref{st:fprd_lem} and in $\Adm_{GL_{2m}}(\mu)$ by the analog of \eqref{rk:bo_double_cosets} for $\wt W_{GL_{2m}}$.  Hence $\wt w_0 \in \Adm_{GSp_{2m}}(\mu)$ by the Iwahori case of the theorem.  Hence $w \in \Adm_{GSp_{2m},I}(\mu)$.
\end{proof}

For applications we'll need the second part of Haines and Ng\^o's theorem, which replaces $\Adm_{GL_{2m},\pm I}(\mu)$ with the set of \emph{$\mu$-permissible} elements
\[
   \Perm_{GL_{2m},\pm I}(\mu) :=
   \biggl\{\, \wt w \in \wt W_{GL_{2m}}/W_{GL_{2m},\pm I} \biggm| 
      \begin{varwidth}{\textwidth}
         \centering
         $\wt w \cdot \omega_i - \omega_i \in \Conv(S_{2m}\mu)$\\
         for all $i \in 2m\ZZ \pm I$
      \end{varwidth}
   \,\biggr\},
\]
where $\Conv(S_{2m}\mu)$ is the convex hull in $\RR^{2m}$ of the Weyl orbit of $\mu$.    By \cite{hngo02a}*{Theorem 1} in the Iwahori case and G\"ortz's generalization \cite{goertz05}*{Corollary 9} to the general parahoric case, one has
\[
   \Perm_{GL_{2m},\pm I}(\mu) = \Adm_{GL_{2m},\pm I}(\mu)
\]
for any $\mu \in \ZZ^{2m}$.  We get the second part of the theorem simply by plugging this in to \eqref{st:adm_intersect}.

\begin{thm}\label{st:perm_intersect}
Let $\mu\in X_*$ be any cocharacter for $GSp_{2m}$.  Then
\begin{flalign*}
\phantom{\qed} & & \Adm_{GSp_{2m},I}(\mu) = \Perm_{GL_{2m},\pm I}(\mu) \cap \wt W/W_I. & & \qed
\end{flalign*}
\end{thm}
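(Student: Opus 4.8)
The plan is to obtain Theorem~\ref{st:perm_intersect} as a purely formal consequence of Theorem~\ref{st:adm_intersect} combined with the equality of $\mu$-permissible and $\mu$-admissible sets for $GL_{2m}$. First I would recall that Theorem~\ref{st:adm_intersect} already gives
\[
   \Adm_{GSp_{2m},I}(\mu) = \Adm_{GL_{2m},\pm I}(\mu) \cap \wt W/W_I
\]
for every cocharacter $\mu \in X_*$. The only remaining ingredient is to rewrite the $GL_{2m}$-admissible set appearing here as a permissible set.

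Next I would invoke the permissibility--admissibility equivalence for $GL_{2m}$. Since $X_* \subset \ZZ^{2m}$, the cocharacter $\mu$ in particular lies in $\ZZ^{2m}$, so this equivalence applies: by Haines and Ng\^o \cite{hngo02a}*{Theorem~1} in the Iwahori case, and by G\"ortz's generalization \cite{goertz05}*{Corollary~9} to arbitrary parahoric level, one has $\Perm_{GL_{2m},\pm I}(\mu) = \Adm_{GL_{2m},\pm I}(\mu)$. Substituting this identity into the display above yields
\[
   \Adm_{GSp_{2m},I}(\mu) = \Perm_{GL_{2m},\pm I}(\mu) \cap \wt W/W_I,
\]
which is exactly the assertion.

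There is essentially no obstacle at this stage: all the real content has already been absorbed into Theorem~\ref{st:adm_intersect}—which itself rests on Lemma~\ref{st:fprd_lem} together with the Iwahori-case theorem of Haines and Ng\^o—and into the cited $GL_{2m}$ permissibility theorems. The only point worth a second glance is that the alcove for $GL_{2m}$ determined by the vectors $\omega_i$, which is used both to define $\Perm_{GL_{2m},\pm I}(\mu)$ and to fix the Bruhat order on $\wt W_{GL_{2m}}$ entering $\Adm_{GL_{2m},\pm I}(\mu)$, is indeed the one for which the cited results are stated; but this normalization has already been arranged in \s\ref{ss:hngo}, so the substitution is legitimate and the proof is immediate.
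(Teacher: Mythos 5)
Your proof is correct and takes essentially the same route as the paper: substitute the Haines--Ng\^o/G\"ortz equality $\Perm_{GL_{2m},\pm I}(\mu) = \Adm_{GL_{2m},\pm I}(\mu)$ into Theorem~\ref{st:adm_intersect}. Your observation about matching alcove normalizations is a reasonable sanity check and is indeed already taken care of in \S\ref{ss:hngo}.
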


\begin{rk}
We likewise obtain obvious double coset versions of \eqref{st:adm_intersect} and \eqref{st:perm_intersect}, where one has equalities between subsets of $W_I \bs \wt W / W_I$, which were anticipated in \cite[Notes added June 2003, no.\ 3]{rap05} (note that the intersection as written in \cite{rap05} should be with $\wt W^K\bs \wt W(GSp)/ \wt W^K$ in place of $\wt W(GSp)$).
\end{rk}

\subsection{Wedge-permissibility and \texorpdfstring{$\mu$}{mu}-admissibility}\label{ss:wedge-perm_mu-adm}
In this subsection we prove \eqref{st:wedge_equiv_adm}, and with it complete the proof of \eqref{st:adm<=>perm_I}.  We shall do so by applying \eqref{st:perm_intersect}, for which we need a good description of the convex hull $\Conv(S_{2m}\mu)$.  Let $\nu_i := \bigl( 1^{(i)}, 0^{(2m-i)} \bigr)$ for $1 \leq i \leq 2m$.  The following lemma is certainly well-known, but for convenience we give a proof.

\begin{lem}\label{st:GL_conv_hull}
For any dominant cocharacter $\mu = (n_1,\dotsc,n_{2m}) \in \ZZ^{2m}$, we have
\begin{align*}
   \Conv(S_{2m}\mu)
      &= \biggl\{\, x \in \RR^{2m} \biggm| 
              \begin{varwidth}{\textwidth}
                 \centering
                 $\nu \cdot x \leq n_1 + \dotsb + n_i$ for all $1 \leq i \leq 2m$ and\\
                 all $\nu \in S_{2m}\nu_i$, with equality when $i = 2m$
              \end{varwidth}
              \,\biggr\}\\
      &= \biggl\{\, x \in \RR^{2m} \biggm| 
              \begin{varwidth}{\textwidth}
                 \centering
                 $n_{2m+1-i} + \dotsb + n_{2m} \leq \nu \cdot x$ for all $1 \leq i \leq 2m$\\ and all $\nu \in S_{2m}\nu_i$, with equality when $i = 2m$
              \end{varwidth}
              \,\biggr\}.
\end{align*}
\end{lem}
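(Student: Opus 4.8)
The statement is a description of the convex hull of the Weyl orbit of a dominant cocharacter $\mu = (n_1,\dotsc,n_{2m})$ for $GL_{2m}$ in terms of linear inequalities. This is a classical fact in the theory of polytopes associated to root systems (it is essentially the statement that the weight polytope of a dominant weight is cut out by the obvious ``partial sum'' inequalities, i.e.\ the description via the dominance order / Rado's theorem on doubly stochastic matrices). My plan is to give a self-contained elementary proof along the following lines. First I would observe that the two displayed descriptions are equivalent to each other: replacing $x$ by $-x$ and $\mu$ by $-w_0\mu$ (where $w_0$ is the longest element, reversing coordinates) swaps them, because $\nu \cdot x \le n_1 + \dotsb + n_i$ for all $\nu \in S_{2m}\nu_i$ says exactly that the sum of the $i$ largest coordinates of $x$ is at most $n_1 + \dotsb + n_i$, while the second says the sum of the $i$ smallest coordinates of $x$ is at least $n_{2m+1-i} + \dotsb + n_{2m}$; together with the equality $\Sigma x = \Sigma \mu$ at $i = 2m$ these are interchanged by $x \mapsto -x$. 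So it suffices to prove the first description.

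**Key steps.** Let $P$ denote the polytope on the right-hand side of the first description. The inclusion $\Conv(S_{2m}\mu) \subseteq P$ is the easy direction: each vertex $w\mu$ satisfies all the defining inequalities (for a permutation $w$, the sum of any $i$ coordinates of $w\mu$ is a sum of $i$ of the $n_j$, hence at most $n_1 + \dotsb + n_i$ since $\mu$ is dominant; and $\Sigma(w\mu) = \Sigma\mu$), and a finite intersection of half-spaces and a hyperplane is convex, so it contains the convex hull. For the reverse inclusion $P \subseteq \Conv(S_{2m}\mu)$, I would argue by induction on the number of pairs $(j,k)$ with $j < k$ and $x(j) < x(k)$ (a measure of how far $x$ is from being dominant), or equivalently proceed as follows: given $x \in P$, let $x^+$ be its dominant rearrangement; since the defining inequalities of $P$ are $S_{2m}$-invariant, $x^+ \in P$ as well, and it suffices to show $x^+ \in \Conv(S_{2m}\mu)$ together with the standard fact that $x \in \Conv(S_{2m} x^+)$ (permutations of a vector lie in the convex hull of... wait, rather: $x$ is a permutation of $x^+$, hence trivially $x \in S_{2m}x^+ \subseteq \Conv(S_{2m}x^+)$, and $\Conv(S_{2m}x^+) \subseteq \Conv(S_{2m}\mu)$ once we know $x^+ \in \Conv(S_{2m}\mu)$ since the latter is $S_{2m}$-stable and convex). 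So we reduce to: a dominant $x^+ \in P$ lies in $\Conv(S_{2m}\mu)$. For this I would use the dominance-order characterization: $x^+ \in P$ dominant means precisely $x^+(1) + \dotsb + x^+(i) \le n_1 + \dotsb + n_i$ for all $i$ with equality at $i = 2m$, i.e.\ $\mu - x^+$ is a nonnegative combination of the simple coroots (a standard lemma, provable by an explicit greedy/telescoping argument writing $\mu - x^+$ in terms of the $e_j - e_{j+1}$). Then an induction on the partial-order distance, moving $x^+$ toward $\mu$ by averaging over a simple reflection $s_j$ (replacing a sub-vector $(a,b)$ with $a \ge b$ by a convex combination of $(a,b)$ and $(b,a)$), expresses $x^+$ as a convex combination of vectors strictly closer to $\mu$ in dominance order, terminating at $\mu$ itself; each such vector is inductively in $\Conv(S_{2m}\mu)$.

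**Alternative and the main obstacle.** A cleaner route, which I would probably actually write, bypasses the explicit root-theoretic bookkeeping: it is a direct consequence of the Birkhoff--von Neumann / Rado theorem that for dominant $x^+$, the condition ``$x^+(1) + \dotsb + x^+(i) \le n_1 + \dotsb + n_i$ for all $i$, equality at $i=2m$'' is equivalent to the existence of a doubly stochastic matrix $D$ with $x^+ = D\mu$, and doubly stochastic matrices are convex combinations of permutation matrices, giving $x^+ \in \Conv(S_{2m}\mu)$ immediately. The verification that the partial-sum inequalities are equivalent to ``$x^+ = D\mu$ for some doubly stochastic $D$'' is exactly Rado's theorem on the permutohedron, which I would cite or prove in a line or two. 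The main obstacle is purely expository: deciding how much of this standard polytope theory to reprove versus cite; the mathematical content in the hard direction ($P \subseteq \Conv(S_{2m}\mu)$) is the Hardy--Littlewood--P\'olya / Rado majorization theorem, and there is no subtlety beyond organizing that argument cleanly, so I expect the bulk of the effort to be in presenting the reduction to the dominant case and the majorization step concisely. Since the lemma is explicitly flagged as well-known, a short proof citing majorization should suffice.
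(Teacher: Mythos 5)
Your proposal is correct, and it is essentially the same route as the paper's proof: both handle the easy inclusion by observing the right-hand side is a convex $S_{2m}$-stable set containing the orbit, and both reduce the hard inclusion to the single telescoping identity
\[
   \mu - x = \sum_{j=1}^{2m-1}\bigl(n_1 + \dotsb + n_j - x_1 - \dotsb - x_j\bigr)(e_j - e_{j+1}),
\]
whose coefficients are nonnegative exactly by the defining inequalities of the polytope. The one organizational difference is the classical fact each of you invokes after that. The paper cites the characterization $\Conv(W\mu) = \{\,x : \mu - \sigma x \text{ is a nonnegative combination of positive coroots for all } \sigma \in W\,\}$ and then notes that $W$-stability of the right-hand side $S$ makes the single telescoping computation suffice for every $\sigma x$; this sidesteps any reduction to the dominant case. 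You instead first replace $x$ by its dominant rearrangement $x^+$ (using $S_{2m}$-stability of $P$ and of $\Conv(S_{2m}\mu)$), verify the partial-sum inequalities for $x^+$, and then invoke Rado's majorization theorem (equivalently Hardy--Littlewood--P\'olya / Birkhoff--von Neumann). These are two packagings of the same underlying result, so your argument is valid; the paper's version is marginally slicker because the cited characterization already quantifies over all $\sigma$ and hence applies to arbitrary $x \in S$ with no dominant-rearrangement step. One small remark on the second equality: your $x \mapsto -x$, $\mu \mapsto -w_0\mu$ symmetry works, but it is even quicker to note that, in the presence of $\Sigma x = \Sigma\mu$, the upper bound on the sum of any $i$ coordinates of $x$ is literally the same inequality as the lower bound on the sum of the complementary $2m-i$ coordinates, which is why the paper just calls it trivial.
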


Here we mean dominant in the usual sense for cocharacters of the standard maximal torus in $GL_{2m}$, namely $n_1 \geq \dotsb \geq n_{2m}$; and by $\nu \cdot x$ we mean the usual dot product of vectors in $\RR^{2m}$.

\begin{proof}[Proof of \eqref{st:GL_conv_hull}]
The second equality is trivial; we prove the first.  Let $S$ denote the set appearing on the right-hand side of the first asserted equality.  Then $S$ is plainly convex and contains $S_{2m}\mu$.  Hence $S$ contains $\Conv(S_{2m}\mu)$.  To check the reverse inclusion, we use that $\Conv(S_{2m}\mu)$ consists precisely of the vectors $x$ such that, for all $\sigma \in S_{2m}$, $\mu - \sigma x$ is a nonnegative linear combination of positive $GL_{2m}$-coroots.  Let $x = (x_1,\dotsc,x_{2m}) \in S$.  Since $S$ is plainly $S_{2m}$-stable, it suffices to show that just $\mu -x$ is a nonnegative linear combination of positive coroots.  By definition of $S$, we have $x_{2m} - n_{2m} = n_1 + \dotsb + n_{2m-1} - x_1 - \dotsb - x_{2m-1}$.  Hence, letting $e_1,\dotsc$, $e_{2m}$ denote the standard basis in $\RR^{2m}$, we have
\begin{multline*}
   \mu - x = (n_1-x_1)(e_1-e_2) + (n_1 + n_2 - x_1 -x_2)(e_2-e_3)\\
              + \dotsb + (n_1 + \dotsb + n_{2m-1} - x_1 - \dotsb - x_{2m-1})(e_{2m-1} - e_{2m}),
\end{multline*}
which is of the desired form.  
\end{proof}

\begin{proof}[Proof of \eqref{st:wedge_equiv_adm}]
Everything is now transparent:  the lemma makes it obvious that the set of wedge-permissible elements in $\wt W/W_I$ equals $\Perm_{GL_{2m},\pm I}(\mu) \cap \wt W/W_I$ for $\mu = \bigl(2^{(s)},1^{(2m-2s)},0^{(s)}\bigr)$, and we then apply \eqref{st:perm_intersect}.
\end{proof}

The proof of \eqref{st:adm<=>perm_I} is now complete.

\subsection{Permissibility}\label{ss:perm}
Our aim in the final two subsections of the paper is to show that the notions of $\mu$-admissibility and $\mu$-permissibility coincide for the cocharacter $\mu = \bigl(2^{(s)},1^{(2m-2s)},0^{(s)}\bigr)$ of $GSp_{2m}$, $0 \leq s \leq m$.  In this subsection we return to the general setup and notation of \s\ref{ss:steinberg_fprd}.

Let $F$ be a facet of the base alcove $A$, let $J_F$ be the set of simple reflections across the walls of $A$ that contain $F$, and let $W_{J_F}$ denote the subgroup of $W_\aff$ generated by $J_F$.  Let $\mu \in X_*$ be any cocharacter. We write $t_\mu$ when we wish to regard $\mu$ as an element in $\wt W$.  Recall that an element $w \in W_{J_F} \bs \wt W / W_{J_F}$ is \emph{$\mu$-permissible} if $w \equiv t_\mu \bmod W_\aff$ and $w \cdot a - a \in \Conv(W\mu)$ for all $a \in F$, where $\Conv(W\mu)$ denotes the convex hull of the Weyl orbit $W\mu$ in $X_*\tensor_\ZZ \RR$; this condition is well-defined on the double coset $w$ as shown by Rapoport \cite{rap05}*{\s 3}.  Note that the containment $w\cdot a - a \in \Conv(W\mu)$ holds for all $a \in F$ $\iff$ for each subfacet $F'$ of $F$ of minimal dimension, the containment $w\cdot a -a \in \Conv(W\mu)$ holds for some $a \in F'$.  The notion of $\mu$-permissibility for elements $w \in \wt W/W_{J_F}$ is defined in an entirely analogous way.

It is known from examples of Haines and Ng\^o \cite{hngo02a}*{Theorem 3} that $\mu$-permissi\-bil\-ity is not well-behaved with regard to Steinberg fixed-point root data, in the sense that \eqref{st:adm_intersect} no longer holds in general when we replace the admissible sets on both sides by the corresponding permissible sets.  Nevertheless, the convex hulls that come up are at least well-behaved, as we now show.  Recall from \cite{hngo02a}*{\s9} that the cocharacter lattice $X_*^{[\Theta]}$ of $\R^{[\Theta]}$ is the subgroup of $X_*$
\[
   X_*^{[\Theta]} := \bigl\{\, x \in X_* \bigm| \Theta(x) \equiv x \bmod Z \,\bigr\},
\]
where $Z := \{\, x \in X_* \mid \langle \alpha, x \rangle = 0\ \text{for all}\ \alpha \in R\,\}$.  Let $V := X_* \tensor \RR$ and $V^{[\Theta]} := X_*^{[\Theta]} \tensor \RR$.

\begin{lem}\label{st:conv_fpdr}
Let $\mu \in X_*^{[\Theta]}$.  Then $\Conv(W^{[\Theta]} \mu) = \Conv(W\mu) \cap V^{[\Theta]}$.
\end{lem}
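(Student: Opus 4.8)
The plan is to prove the nontrivial inclusion $\Conv(W\mu)\cap V^{[\Theta]}\subseteq\Conv(W^{[\Theta]}\mu)$; the reverse is immediate, since $X_*^{[\Theta]}$ is $W^{[\Theta]}$-stable (so $\Conv(W^{[\Theta]}\mu)\subseteq V^{[\Theta]}$) and $W^{[\Theta]}\mu\subseteq W\mu$. First I would reduce to the semisimple case. Writing $Q^\vee_\RR$ for the $\RR$-span of $R^\vee$ in $V$, one has the $W$- and $\Theta$-stable splitting $V=(Z\tensor\RR)\oplus Q^\vee_\RR$, with $W$ and $\Theta$ trivial on $Z\tensor\RR$ and $(Z\tensor\RR)\cap Q^\vee_\RR=0$ (e.g.\ by positive-definiteness of a Weyl-invariant form on $Q^\vee_\RR$); then $V^{[\Theta]}=(Z\tensor\RR)\oplus(Q^\vee_\RR)^\Theta$, and if $\mu=\mu_Z+\mu_0$ is the corresponding decomposition, the hypothesis $\mu\in X_*^{[\Theta]}$ forces $\Theta\mu_0-\mu_0\in(Z\tensor\RR)\cap Q^\vee_\RR=0$, i.e.\ $\mu_0\in(Q^\vee_\RR)^\Theta$. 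Since $\Conv(W\mu)=\mu_Z+\Conv(W\mu_0)\subseteq\mu_Z+Q^\vee_\RR$ and likewise for $W^{[\Theta]}$, intersecting with $V^{[\Theta]}$ only affects the $Q^\vee_\RR$-coordinate, so the claim reduces to the case $\R$ semisimple, $\mu\in V^\Theta$ (recall $W^{[\Theta]}=W^\Theta$, and then $\R^{[\Theta]}$ is semisimple with Cartan space $V^\Theta$).

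Next I would average. Let $d=|\langle\Theta\rangle|$ and $p:=\tfrac{1}{d}\sum_{i=0}^{d-1}\Theta^i$, the orthogonal projection of $V$ onto $V^\Theta$; it fixes $V^\Theta$ pointwise and commutes with $W^\Theta$ (whose elements commute with $\Theta$ as operators on $V$). Take $x\in\Conv(W\mu)\cap V^\Theta$ and write $x=\sum_{w\in W}c_w\,w\mu$ as a convex combination. Since $\mu\in V^\Theta$ gives $\Theta(w\mu)=(\Theta w)\mu$, one has $\Theta x=\sum_w c_{\Theta^{-1}w}\,w\mu$; as $x=\Theta x$, averaging the identities $x=\Theta^i x$ over $i$ replaces $(c_w)$ by a $\Theta$-invariant probability vector without changing $x$, and grouping by $\langle\Theta\rangle$-orbits $\mathcal O\subseteq W$ --- using that $\tfrac{1}{|\mathcal O|}\sum_{w\in\mathcal O}w\mu=p(w_0\mu)$ for any $w_0\in\mathcal O$ --- rewrites $x$ as a convex combination of the points $p(w\mu)$, $w\in W$. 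Hence it suffices to show $p(w\mu)\in\Conv(W^\Theta\mu)$ for every $w\in W$.

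Fix $w$ and put $\bar v:=p(w\mu)\in V^\Theta$. Because $p$ commutes with $W^\Theta$, a $W^\Theta$-translate of $\bar v$ is again of this form; and translating $\mu$ by $W^\Theta$ changes neither $\Conv(W\mu)$ nor $\Conv(W^\Theta\mu)$ nor the condition $\mu\in V^\Theta$. So I may assume $\bar v$ and $\mu$ are both dominant for $\R^{[\Theta]}$. By the explicit description of Steinberg fixed-point root data (\cite{hngo02a}*{\s9}), the simple roots of $\R^{[\Theta]}$ are, on $V^\Theta$, positive multiples of the restrictions of the simple roots of $\R$, and the simple coroots of $\R^{[\Theta]}$ are positive multiples of the $\Theta$-orbit sums of the simple coroots of $\R$; equivalently, the $\R^{[\Theta]}$-dominant chamber is the intersection with $V^\Theta$ of the $\R$-dominant chamber, and $p$ maps $\sum_{\alpha\in\Pi}\RR_{\geq0}\alpha^\vee$ onto $\sum_{\beta\in\Pi^{[\Theta]}}\RR_{\geq0}\beta^\vee$. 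The first of these makes $\bar v$ and $\mu$ dominant for $\R$ as well, so $\bar v\in\Conv(W\mu)$ forces $\mu-\bar v\in\sum_{\alpha\in\Pi}\RR_{\geq0}\alpha^\vee$ by the classical description of the convex hull of a Weyl orbit; applying $p$, which is the identity on $\mu-\bar v\in V^\Theta$, and using the second gives $\mu-\bar v\in\sum_{\beta\in\Pi^{[\Theta]}}\RR_{\geq0}\beta^\vee$, which together with $\R^{[\Theta]}$-dominance of $\bar v$ and $\mu$ says exactly $\bar v\in\Conv(W^{[\Theta]}\mu)=\Conv(W^\Theta\mu)$, as needed.

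The crux --- and the only non-formal part --- is the structural input used in the third paragraph: compatibility of the dominant chambers of $\R$ and $\R^{[\Theta]}$ under restriction to $V^\Theta$, and the fact that $p$ carries the positive coroot cone of $\R$ onto that of $\R^{[\Theta]}$. Both follow from Steinberg's classification of fixed-point root data, but checking them uniformly over the irreducible types, with the correct normalizations (notably in the twisted type $A$ case), is where genuine care is needed; the remainder is elementary convex geometry together with the $\langle\Theta\rangle$-averaging.
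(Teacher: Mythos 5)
Your proposal is correct, and it rests on exactly the same technical input as the paper's proof, namely the structural fact from \cite{hngo02a}*{\S 9} that averaging over $\langle\Theta\rangle$ carries the positive coroot cone of $\R$ onto that of $\R^{[\Theta]}$; but you package the argument rather differently.  The paper's proof is more direct: it fixes $x\in\Conv(W\mu)\cap V^{[\Theta]}$ and $\sigma\in W^{[\Theta]}$, uses the dominance-order characterization of both convex hulls, observes that $\mu - \sigma x$ is a $\Theta$-fixed element of the $\R$-positive coroot cone, and then averages that single expression, rewriting it as a nonnegative combination of positive $\R^{[\Theta]}$-coroots.  No reduction to the semisimple case and no decomposition of $x$ as a convex combination is needed, because the element being averaged already lies in $(Q^\vee)^\Theta\tensor\RR$.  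You instead build in two preliminary reductions --- passing to the semisimple root datum so that the averaging projection $p$ maps onto $V^\Theta$ rather than the larger $V^{[\Theta]}$, and then writing $x$ as a convex combination and $\Theta$-averaging the coefficients to reduce to showing $p(w\mu)\in\Conv(W^\Theta\mu)$ --- after which the dominance-order argument is the same.  What your version buys is that the ``WLOG $\mu$ and the target vector are simultaneously $\R^{[\Theta]}$- and $\R$-dominant'' step, which the paper leaves implicit, is made explicit via the compatibility of dominant chambers; the cost is the extra scaffolding.  One small slip: you assert that $\Theta$ acts trivially on $Z\tensor\RR$, which is false in general (for $GSp_{2m}\subset GL_{2m}$, $\Theta$ acts by $-1$ on $Z$); fortunately nothing in your argument actually uses it, since the decomposition $V^{[\Theta]}=(Z\tensor\RR)\oplus(Q^\vee_\RR)^\Theta$ and the triviality of the $W$-action on $Z\tensor\RR$ are all that is needed for the reduction.
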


\begin{proof}
The containment $\subset$ holds since $\Conv(W\mu) \cap V^{[\Theta]}$ is convex and contains $W^{[\Theta]} \mu$.  To establish the containment $\supset$, similarly to the proof of \eqref{st:GL_conv_hull}, we use that $\Conv(W^{[\Theta]}\mu)$ consists precisely of the elements $x \in V^{[\Theta]}$ such that, for all $\sigma \in W^{[\Theta]}$, $\mu - \sigma x$ is a nonnegative linear combination of positive $\R^{[\Theta]}$-coroots.  So let $x \in \Conv(W\mu) \cap V^{[\Theta]}$ and $\sigma \in W^{[\Theta]}$.  By the analogous statement for $\Conv(W\mu)$, $\mu - \sigma x$ is expressible as a nonnegative linear combination of positive $\R$-coroots,
\[
   \mu - \sigma x = \sum_{\alpha^\vee \in R^\vee_+} c_{\alpha^\vee} \alpha^\vee.
\]
Hence $\mu - \sigma x \in V^{[\Theta]} \cap (Q^\vee \tensor \RR) = (Q^{\vee})^{\Theta} \tensor \RR$.  Hence $\Theta(\mu - \sigma x) = \mu - \sigma x$.  Hence for any positive integer $N$,
\[
   \mu - \sigma x = \frac 1 N \sum_{i=0}^{N-1}\Theta^i(\mu-\sigma x)
           = \sum_{\alpha^\vee \in R^\vee_+} c_{\alpha^\vee} \frac 1 N
                  \sum_{i=0}^{N-1}\Theta^i(\alpha^\vee).
\]
It follows from the description of the coroots in \cite{hngo02a}*{\s9} that for $\alpha^\vee$ a positive $\R$-coroot and $N$ equal to the order of $\Theta|_{Q^\vee}$, $\frac 1 N \sum_{i=0}^{N-1}\Theta^i(\alpha^\vee)$ is a positive multiple of a positive $\R^{[\Theta]}$-coroot. The conclusion follows.
\end{proof}

\subsection{\texorpdfstring{$\mu$}{mu}-permissibility and \texorpdfstring{$\mu$}{mu}-admissibility}\label{ss:perm_adm}
We again return to our original notation for $X_*$ and $\wt W$.  We now conclude the paper by showing that $\mu$-admissibility and $\mu$-permissibility in $\wt W/W_I$ are equivalent in the case of the cocharacter $\mu = \bigl(2^{(s)},1^{(2m-2s)},0^{(s)}\bigr)$.  

To proceed we'll need the (again well-known) analog of \eqref{st:GL_conv_hull} for $GSp_{2m}$.  Let
\[
   V:= X_* \tensor \RR \ciso \bigl\{\,(x_1,\dotsc,x_{2m}) \in \RR^{2m} \bigm| x_1 + x_{2m} = \dots = x_m + x_{m+1}\,\bigr\}.
\]
For $x = (x_1,\dotsc,x_{2m}) \in V$, let $c(x)$ denote the common real number $x_1 + x_{2m} = \dots = x_m + x_{m+1}$.

\begin{lem}\label{st:GSp_conv_hull}
For any dominant cocharacter $\mu = (n_1,\dotsc,n_{2m}) \in X_*$, we have
\begin{align*}
   \Conv(S_{2m}^*\mu)
      &= \Conv(S_{2m}\mu) \cap V\\
      &= \biggl\{\, x \in V \biggm| 
               \begin{varwidth}{\textwidth}
                  \centering
                  $c(x) = c(\mu)$ and $\nu \cdot x \leq n_1 + \dotsb + n_i$ for\\
                  all $1 \leq i \leq m$ and all $\nu \in S_{2m}^*\nu_i$
               \end{varwidth}
            \,\biggr\}\\
      &= \biggl\{\, x \in V \biggm| 
               \begin{varwidth}{\textwidth}
                  \centering
                  $c(x) = c(\mu)$ and $n_{2m+1-i} + \dotsb + n_{2m} \leq \nu \cdot x$\\
                  for all $1 \leq i \leq m$ and all $\nu \in S_{2m}^*\nu_i$
               \end{varwidth}
            \,\biggr\}.
\end{align*}
\end{lem}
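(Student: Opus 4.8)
The plan is to dispatch the three asserted equalities in turn, and the combinatorial content is concentrated entirely in the second. Throughout, $\mu$ dominant and $\mu \in V$ give $n_1 \geq n_2 \geq \dots \geq n_{2m}$ and $n_j + n_{2m+1-j} = c(\mu)$ for all $j$. The first equality, $\Conv(S_{2m}^*\mu) = \Conv(S_{2m}\mu) \cap V$, I would obtain as an immediate instance of \eqref{st:conv_fpdr}: the root datum of $GSp_{2m}$ is a Steinberg fixed-point root datum of that of $GL_{2m}$, with Weyl group $W = S_{2m}$, $W^{[\Theta]} = S_{2m}^*$, and fixed-point space $V^{[\Theta]} = V$, and every cocharacter $\mu \in X_*$ lies in the lattice $X_*^{[\Theta]}$, so the hypothesis of \eqref{st:conv_fpdr} is met.

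For the second equality I would feed the $GL_{2m}$-description from \eqref{st:GL_conv_hull} into this and intersect with $V$. For $x \in V$ one has $\Sigma x = m\,c(x)$ by pairing up the coordinates, and likewise $\Sigma\mu = m\,c(\mu)$, so on $V$ the ``equality when $i = 2m$'' clause of \eqref{st:GL_conv_hull} is exactly the condition $c(x) = c(\mu)$. What remains is to show that, for $x \in V$ with $c(x) = c(\mu)$, the \emph{symmetric} inequalities $\nu\cdot x \leq n_1 + \dots + n_i$ with $1\leq i\leq m$ and $\nu\in S_{2m}^*\nu_i$ already force the full family of $GL_{2m}$-inequalities $\nu\cdot x \leq n_1 + \dots + n_i$ with $1\leq i\leq 2m$ and $\nu\in S_{2m}\nu_i$ (the reverse implication being trivial, since $S_{2m}^*\nu_i\subset S_{2m}\nu_i$ and equality at $i=2m$ reads off $c(x) = c(\mu)$ on $V$). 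Writing $j^* := 2m+1-j$, one notes first that for $i\leq m$ the orbit $S_{2m}^*\nu_i$ consists precisely of the indicator vectors of the $i$-element subsets of $\{1,\dots,2m\}$ containing no pair $\{j,j^*\}$. Then, given an arbitrary $i$-element subset $A$, I would decompose $A = P\sqcup Q$, where $P$ is the union of the $p$ pairs $\{j,j^*\}$ contained in $A$ and $Q$ is the set of the remaining $q = i-2p$ ``unmatched'' indices; then $Q$ contains no complementary pair, and $p + q\leq m$ because there are only $m$ pairs. Using $x_j + x_{j^*} = c(x) = c(\mu)$ on each pair in $P$ and the symmetric inequality applied to $Q$ (legitimate as $q\leq m$) gives $\sum_{j\in A}x_j = p\,c(\mu) + \sum_{j\in Q}x_j \leq p\,c(\mu) + (n_1 + \dots + n_q)$; and pairing the index $q+k$ with $q+2p+1-k$ for $k = 1,\dots,p$ — together with the dominance estimate $n_{q+k} + n_{q+2p+1-k}\geq c(\mu)$, valid because those indices sum to $2q+2p+1\leq 2m+1$ and $n_j + n_{j^*} = c(\mu)$ — yields $n_{q+1} + \dots + n_{q+2p}\geq p\,c(\mu)$, hence $\sum_{j\in A}x_j\leq n_1 + \dots + n_{2p+q} = n_1 + \dots + n_i$, as needed. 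This is where essentially all the work lies; the rest is bookkeeping.

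The third equality I would get by a self-duality argument on $V$. For $x\in V$ with $c(x) = c(\mu)$ and $A$ an $i$-element subset ($i\leq m$) with no complementary pair, set $B := \{j^* \mid j\in A\}$; then $B$ enjoys the same properties, $A\cap B = \emptyset$, and $A\cup B$ is a union of $i$ complementary pairs, so $\sum_{j\in A}x_j + \sum_{j\in B}x_j = i\,c(\mu)$. Since $i\,c(\mu) - (n_1 + \dots + n_i) = n_{2m+1-i} + \dots + n_{2m}$, applying the second form's inequality to $B$ converts it into the third form's inequality for $A$, and conversely; so the second and third descriptions agree. The hard part will be exactly the combinatorial reduction in the middle paragraph — verifying that the decomposition $A = P\sqcup Q$ with $p+q\leq m$, combined with the symmetric index-pairing bound $n_{q+k} + n_{q+2p+1-k}\geq c(\mu)$, really does recover every $GL_{2m}$-inequality on all of $V$.
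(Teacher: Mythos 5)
Your proposal is correct and follows the first of the two routes the paper sketches: deduce the statement from \eqref{st:conv_fpdr} together with \eqref{st:GL_conv_hull}. The paper merely asserts that the second and third descriptions ``follow easily'' from these; you have correctly identified and filled in the genuine combinatorial content there, namely that on $V$ with $c(x) = c(\mu)$ the $S_{2m}^*$-inequalities with $i \leq m$ already imply the full $S_{2m}$-family with $i \leq 2m$, via the decomposition $A = P \sqcup Q$ and the dominance bound $n_a + n_b \geq c(\mu)$ for $a + b \leq 2m+1$. The self-duality bijection $A \mapsto A^*$ for the third equality is also the expected argument. In short: essentially the same approach as the paper, with the elided bookkeeping carried out correctly.
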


Here we again mean dominant in the usual sense, namely $n_1 \geq \dotsb \geq n_{2m}$.

\begin{proof}[Proof of \eqref{st:GSp_conv_hull}]
The first equality is just an application of \eqref{st:conv_fpdr}, and the other two follow easily from this and \eqref{st:GL_conv_hull} (or can be proved directly in a way entirely analogous to the proof of \eqref{st:GL_conv_hull}).
\end{proof}

For $i = 0,\dotsc,$ $m$, let
\[
   \eta_i :=\bigl((-\tfrac 1 2)^{(i)},0^{(2m-2i)},(\tfrac 1 2)^{(i)} \bigr) = \frac{\omega_i + \omega_{-i}}2.
\]
The points $\eta_0,\dotsc,$ $\eta_m$ serve as ``vertices'' for the base alcove \eqref{disp:GSp_alc}, in the sense that each facet of minimal dimension (namely $1$) contains exactly one of the $\eta_i$'s.

\begin{prop}\label{st:perm_adm}
Let $\mu = \bigl(2^{(s)},1^{(2m-2s)},0^{(s)}\bigr)$.  Then $\wt w \in \wt W/W_I$ is $\mu$-admissi\-ble $\iff$ $\wt w$ is $\mu$-permissible.
\end{prop}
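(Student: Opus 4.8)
The implication ``$\mu$-admissible $\Rightarrow$ $\mu$-permissible'' holds in complete generality by Rapoport \cite{rap05}*{\s3}, so only the converse requires argument, and the plan is to reduce it, using the equivalence of $\mu$-admissibility with wedge-permissibility from \eqref{st:wedge_equiv_adm}, to checking that a $\mu$-permissible $\wt w \in \wt W/W_I$ satisfies conditions \eqref{it:naive_perm_1}--\eqref{it:wedge_perm} at each index $i \in 2m\ZZ \pm I$. So I would fix such a $\wt w$, write $\wt w = t_\lambda\sigma$ with $\sigma \in S_{2m}^*$ and $\lambda \in X_*$, and put $z_i := \wt w\cdot\omega_i - \omega_i$. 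The relation $\wt w \equiv t_\mu \bmod W_\aff$ forces $c(\lambda) = c(\mu) = 2$, so $\Sigma z_i = \Sigma\lambda = m\cdot c(\lambda) = 2m$, giving \eqref{it:naive_perm_2} for free; and since each minimal facet of the base alcove \eqref{disp:GSp_alc} contains exactly one of the $\eta_i$, $\mu$-permissibility of $\wt w$ amounts to the assertions $q_i := \wt w\cdot\eta_i - \eta_i \in \Conv(S_{2m}^*\mu)$ for all $i \in I$.

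The first step is to rewrite these in terms of the $z_i$. From $\eta_i = \tfrac12(\omega_i + \omega_{-i})$ and affineness of $\wt w$, together with $\omega_{-i} = -\omega_i^*$, $\sigma \in S_{2m}^*$, and $\lambda^* = \mathbf 2 - \lambda$, one gets $z_{-i} = \mathbf 2 - z_i^*$, hence
\[
   q_i = \mathbf 1 + \tfrac12(z_i - z_i^*)
   \qquad\text{and}\qquad
   z_i - q_i = \tfrac12(z_i + z_i^* - \mathbf 2) = \tfrac12(\delta + \delta^*),
\]
where $\delta := \sigma\omega_i - \omega_i$ is the difference of the indicator vectors of $\{1,\dotsc,i\}$ and $\sigma\{1,\dotsc,i\}$. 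Call a coordinate-pair $\{j,\,2m+1-j\}$ \emph{outer} if $j \le i$ and \emph{inner} otherwise, so there are $i$ outer and $m-i$ inner pairs. Because $\sigma$ lies in $S_{2m}^*$, the set $\sigma\{1,\dotsc,i\}$ meets each pair in at most one point, so $(\delta+\delta^*)(j)$ is constant along each pair and lies in $\{0,1\}$ on outer pairs and in $\{-1,0\}$ on inner ones. As $z_i$ is integral while $q_i \in \tfrac12\ZZ^{2m}$, this forces $z_i(j) = \lceil q_i(j)\rceil$ on outer pairs and $z_i(j) = \lfloor q_i(j)\rfloor$ on inner pairs; combined with the inclusion $\Conv(S_{2m}^*\mu) \subseteq [\mathbf 0,\mathbf 2]^{2m}$ furnished by \eqref{st:GL_conv_hull}, the rounding formula gives $\mathbf 0 \le z_i \le \mathbf 2$, which is \eqref{it:naive_perm_1}.

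The remaining condition \eqref{it:wedge_perm}, $\#\{j\mid z_i(j) = 0\}\le s$, is the heart of the matter, and here I would feed the inequalities of \eqref{st:GL_conv_hull} back in. On each outer pair the sum $z_i(j) + z_i^*(j)$ is $2$ or $3$, on each inner pair it is $1$ or $2$, and the odd values occur exactly on the pairs where $q_i$ takes the values $\bigl\{\tfrac12,\tfrac32\bigr\}$; counting these ``half-integral'' pairs against $\Sigma z_i = 2m$ shows that there are as many outer as inner ones, so each number equals $\tfrac12 c_{1/2}(q_i)$, where $c_{1/2}(q_i)$ is the number of coordinates of $q_i$ equal to $\tfrac12$. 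A coordinate of $z_i$ vanishes precisely when it lies at the ``$0$ end'' of a pair carrying $q_i$-values $\{0,2\}$ or at the ``$\tfrac12$ end'' of an inner pair carrying $q_i$-values $\bigl\{\tfrac12,\tfrac32\bigr\}$, so $\#\{j\mid z_i(j) = 0\} = c_0(q_i) + \tfrac12 c_{1/2}(q_i)$, with $c_0(q_i)$ the number of zero coordinates of $q_i$. Finally, applying the lower-sum inequalities of \eqref{st:GL_conv_hull} to $q_i \in \Conv(S_{2m}\mu)$ with the index set of all coordinates of $q_i$ that are $\le \tfrac12$ --- there are $c_0(q_i) + c_{1/2}(q_i) \le m \le 2m-s$ of them and their $q_i$-sum is $\tfrac12 c_{1/2}(q_i)$ --- yields $c_0(q_i) + \tfrac12 c_{1/2}(q_i) \le s$, i.e.\ \eqref{it:wedge_perm}. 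The conditions at the indices $-i$ for $i \in I$, and at all their $2m$-translates, then follow formally from $z_{-i} = \mathbf 2 - z_i^*$ and the $2m$-periodicity of $i \mapsto z_i$; so $\wt w$ is wedge-permissible, hence $\mu$-admissible.

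I expect the bookkeeping of the last paragraph to be the main obstacle. Passing from $q_i$ to $z_i$ by rounding up on outer pairs and down on inner pairs threatens to manufacture extra vanishing coordinates on the inner side, and the argument closes only because the number of half-integral outer pairs equals the number of half-integral inner pairs --- a consequence of $\Sigma z_i = 2m$ --- so that the apparent over-count collapses to the identity $\#\{j\mid z_i(j) = 0\} = c_0(q_i) + \tfrac12 c_{1/2}(q_i)$ that \eqref{st:GL_conv_hull} is designed to bound. Getting this pairing argument right, and confirming that $q_i$ is at worst half-integral with its half-integral values confined to the pattern $\bigl\{\tfrac12,\tfrac32\bigr\}$ on a single pair, is where the care is needed.
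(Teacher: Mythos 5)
Your argument is correct, and arrives at the conclusion by a recognizably different route. For \eqref{it:naive_perm_1}, the paper combines the basic inequalities of \eqref{st:basic_ineqs} with the bound $\mathbf{-2} \leq \mu_i - \mu_i^* \leq \mathbf 2$ extracted from \eqref{st:GSp_conv_hull}; your rounding observation --- that $z_i - q_i = \tfrac12(\delta + \delta^*)$ is $\{0,\tfrac12\}$-valued on outer pairs and $\{-\tfrac12,0\}$-valued on inner pairs, so $z_i$ equals $\lceil q_i\rceil$ or $\lfloor q_i\rfloor$ coordinate-by-coordinate --- extracts $\mathbf 0 \leq z_i \leq \mathbf 2$ directly from the box bound on the convex hull and bypasses \eqref{st:basic_ineqs} entirely. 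For \eqref{it:wedge_perm}, the paper argues by contradiction: it decomposes $\{1,\dotsc,2m\}$ by the pattern of $(\mu_i(j),\mu_i(j^*))$ into sets $E,F,G,H$, deduces $\#F = \#G$ from $\Sigma\mu_i = 2m$, and shows that $\#E + \#F > s$ would violate the lower-sum inequality of \eqref{st:GSp_conv_hull} on the index set $E\amalg F\amalg G$. You instead establish the identity $\#\{j : z_i(j) = 0\} = c_0(q_i) + \tfrac12 c_{1/2}(q_i)$ directly (the same $\Sigma z_i = 2m$ count, organized by inner/outer half-integral pairs rather than by $(\mu_i(j),\mu_i(j^*))$-pattern) and then bound it by $s$ using the lower-sum inequality on the index set $\{j : q_i(j) \leq \tfrac12\}$. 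These index sets are in fact equal (both are $E\amalg F\amalg G$), and the quantities $\#E + \#F$ and $c_0(q_i) + \tfrac12 c_{1/2}(q_i)$ coincide once $\#F = \#G$ is in hand, so the two proofs are deploying the same inequality on the same data; what differs is the direct-versus-contradiction framing and the bookkeeping. Both are valid, and yours is marginally more self-contained in that it avoids invoking the face-combinatorics lemma.
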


\begin{proof}
We shall actually show that $\wt w$ is wedge-permissible $\iff$ $\wt w$ is $\mu$-permissible.  Let $\mu_i := \wt w \cdot \omega_i - \omega_i$ for $i \in 2m\ZZ \pm I$, and note that
\[
   \wt w \cdot \eta_i - \eta_i = \frac{\mu_i + \mu_{-i}}2
   \quad\text{for}\quad i \in I.
\]
The implication $\Longrightarrow$ is either obvious now from the definition of wedge-permissibility and \eqref{st:GSp_conv_hull}; or follows from the general fact due to Kottwitz and Rapoport \cite{kottrap00}*{11.2} that $\mu$-admissibility always implies $\mu$-permissibility for any cocharacter $\mu$ in any extended affine Weyl group.  To prove the implication $\Longleftarrow$, suppose that $\wt w$ is $\mu$-permissible, and let $i \in 2m\ZZ \pm I$.  Since $\frac{\mu_i + \mu_{-i}}2 \in \Conv(S_{2m}^*\mu)$, it is clear that $\mu_i$ (and $\mu_{-i}$) satisfy \eqref{it:naive_perm_2}.  Hence $\mu_i + \mu_{-i}^* = \mathbf 2$.  Hence
\[
   \frac{\mu_i + \mu_{-i}}2 = \frac{\mu_i + \mathbf 2 - \mu_i^*}2.
\]
Varying $\nu \in S_{2m}^*\nu_1$, we deduce from \eqref{st:GSp_conv_hull} that
\[
   \mathbf{-2} \leq \mu_i - \mu_i^* \leq \mathbf 2.
\]
Hence, by the obvious analog of the basic inequalities (\ref{st:basic_ineqs}, \ref{rk:basic_ineqs_arb_index}) for $\wt W/W_I$ (here with $d = 2$),
\[
   \mathbf 0 \leq \mu_i \leq \mathbf 2,
\]
and $\mu_i$ satisfies \eqref{it:naive_perm_1}.

To complete the proof, suppose by contradiction that $\mu_i$ does not satisfy \eqref{it:wedge_perm}.  More precisely, let
\begin{align*}
   E &:= \bigl\{\, j \in \{1,\dotsc,2m\} \bigm| \mu_i(j) = 0 \text{ and } \mu_i(j^*) = 2 \,\bigr\},\\
   F &:= \bigl\{\, j \in \{1,\dotsc,2m\} \bigm| \mu_i(j) = 0 \text{ and } \mu_i(j^*) = 1 \,\bigr\},\\
   G &:= \bigl\{\, j \in \{1,\dotsc,2m\} \bigm| \mu_i(j) = 1 \text{ and } \mu_i(j^*) = 2 \,\bigr\},\ \text{and}\\
   H &:= \bigl\{\, j \in \{1,\dotsc,2m\} \bigm| \mu_i(j) = 1 \text{ and } \mu_i(j^*) = 1 \,\bigr\},
\end{align*}
where $j^* := 2m+1-j$.  Then, by the basic inequalities,
\[
   \{1,\dotsc,2m\} = E \amalg F \amalg G \amalg H \amalg E^* \amalg F^* \amalg G^*,
\]
where we write $S^* := 2m+1-S$ for any subset $S\subset \{1,\dotsc,2m\}$.  Let
\[
   e:= \# E, \quad f:= \# F, \quad\text{and}\quad g := \# G.
\]
Our assumption to obtain a contradiction is that
\[
   e + f = \#(E \amalg F) = \#\bigl\{\, j \in \{1,\dotsc,2m\} \bigm| \mu_i(j) = 0 \,\bigr\} > s.
\]
Under our assumption, we may write $e + f = s + t$ with $t > 0$.  Since $\#(E \amalg F \amalg G) = e + f + g = s + t + g$, we conclude from \eqref{st:GSp_conv_hull} that
\[
   t+g
   \leq \sum_{j\in E \amalg F \amalg G}
      \biggl( \frac{\mu_i + \mu_{-i}}2\biggr)(j)
   = \sum_{j\in E \amalg F \amalg G}
      \frac{2+ \mu_i(j) - \mu_{i}(j^*)}2
   = \frac{f+g}2.
\]
But the equality $\Sigma \mu_i = 2m$ forces $f = g$.  Hence the last expression in the display equals $g$, a contradiction.
\end{proof}

\begin{rk}
In the case of local models for ramified $GU_n$ for $n$ for \emph{even,} we shall show in \cite{sm10a} --- via essentially the same proof --- that the Schubert cells contained in the special fiber of $M_I^\wedge$ inside the affine flag variety are indexed by a variant of the $\mu$-permissible set in which the requirement that $w$ and $t_\mu$ become equal in $\wt W/W_\aff$ is weakened to require only that $w$ and $t_\mu$ become equal in $\wt W/W_\aff$ mod torsion.  It turns out that neither variant agrees in general with the $\mu$-admissible set, which we shall show indexes the Schubert cells contained in the special fiber of $M_I^\spin$.
\end{rk}

%

%


%
%
%
%

%

%
%



\begin{bibdiv}
\begin{biblist}


\bib{arz09}{article}{
  author={Arzdorf, Kai},
  title={On local models with special parahoric level structure},
  journal={Michigan Math. J.},
  volume={58},
  date={2009},
  pages={683\ndash 710},
}

\bib{bourLGLA4-6}{book}{
  author={Bourbaki, Nicolas},
  title={Lie groups and Lie algebras. Chapters 4--6},
  series={Elements of Mathematics (Berlin)},
  note={Translated from the 1968 French original by Andrew Pressley},
  publisher={Springer-Verlag},
  place={Berlin},
  date={2002},
  pages={xii+300},
  isbn={3-540-42650-7},
}

\bib{goertz01}{article}{
  author={G{\"o}rtz, Ulrich},
  title={On the flatness of models of certain Shimura varieties of PEL-type},
  journal={Math. Ann.},
  volume={321},
  date={2001},
  number={3},
  pages={689--727},
  issn={0025-5831},
}

\bib{goertz03}{article}{
  author={G{\"o}rtz, Ulrich},
  title={On the flatness of local models for the symplectic group},
  journal={Adv. Math.},
  volume={176},
  date={2003},
  number={1},
  pages={89--115},
}

\bib{goertz04}{article}{
   author={G{\"o}rtz, Ulrich},
   title={Computing the alternating trace of Frobenius on the sheaves of
   nearby cycles on local models for $\rm GL\sb 4$ and $\rm GL\sb 5$},
   journal={J. Algebra},
   volume={278},
   date={2004},
   number={1},
   pages={148--172},
   issn={0021-8693},
}

\bib{goertz05}{article}{
  author={G{\"o}rtz, Ulrich},
  title={Topological flatness of local models in the ramified case},
  journal={Math. Z.},
  volume={250},
  date={2005},
  number={4},
  pages={775--790},
  issn={0025-5874},
}

\bib{hngo02b}{article}{
  author={Haines, Thomas J.},
  author={Ng{\^o}, B. C.}*{inverted={yes}},
  title={Nearby cycles for local models of some Shimura varieties},
  journal={Compositio Math.},
  volume={133},
  date={2002},
  number={2},
  pages={117--150},
  issn={0010-437X},
}

\bib{hngo02a}{article}{
  author={Haines, Thomas J.},
  author={Ng{\^o}, B. C.}*{inverted={yes}},
  title={Alcoves associated to special fibers of local models},
  journal={Amer. J. Math.},
  volume={124},
  date={2002},
  number={6},
  pages={1125--1152},
  issn={0002-9327},
}

\bib{hrap08}{article}{
  author={Haines, T.},
  author={Rapoport, M.},
  title={On parahoric subgroups},
  contribution={appendix to: G. Pappas and M. Rapoport, \emph {Twisted loop groups and their affine flag varieties}, Adv. Math. \textbf {219} (2008), no. 1, 118--198},
}

\bib{kottrap00}{article}{
  author={Kottwitz, R.},
  author={Rapoport, M.},
  title={Minuscule alcoves for ${\rm GL}\sb n$ and ${\rm GSp}\sb {2n}$},
  journal={Manuscripta Math.},
  volume={102},
  date={2000},
  number={4},
  pages={403--428},
  issn={0025-2611},
}

\bib{kr03}{article}{
   author={Kr{\"a}mer, N.},
   title={Local models for ramified unitary groups},
   journal={Abh. Math. Sem. Univ. Hamburg},
   volume={73},
   date={2003},
   pages={67--80},
}

\bib{pap00}{article}{
  author={Pappas, Georgios},
  title={On the arithmetic moduli schemes of PEL Shimura varieties},
  journal={J. Algebraic Geom.},
  volume={9},
  date={2000},
  number={3},
  pages={577--605},
  issn={1056-3911},
}

\bib{paprap03}{article}{
  author={Pappas, G.},
  author={Rapoport, M.},
  title={Local models in the ramified case. I. The EL-case},
  journal={J. Algebraic Geom.},
  volume={12},
  date={2003},
  number={1},
  pages={107--145},
  issn={1056-3911},
}

\bib{paprap05}{article}{
  author={Pappas, G.},
  author={Rapoport, M.},
  title={Local models in the ramified case. II. Splitting models},
  journal={Duke Math. J.},
  volume={127},
  date={2005},
  number={2},
  pages={193--250},
  issn={0012-7094},
}

\bib{paprap08}{article}{
  author={Pappas, G.},
  author={Rapoport, M.},
  title={Twisted loop groups and their affine flag varieties},
  contribution={ type={an appendix}, author={Haines, T.}, author={Rapoport}, },
  journal={Adv. Math.},
  volume={219},
  date={2008},
  number={1},
  pages={118--198},
  issn={0001-8708},
}

\bib{paprap09}{article}{
  author={Pappas, G.},
  author={Rapoport, M.},
  title={Local models in the ramified case. III. Unitary groups},
  journal={J. Inst. Math. Jussieu},
  date={2009},
  volume={8},
  number={3},
  pages={507--564},
}

\bib{rap05}{article}{
  author={Rapoport, Michael},
  title={A guide to the reduction modulo $p$ of Shimura varieties},
  language={English, with English and French summaries},
  note={Automorphic forms. I},
  journal={Ast\'erisque},
  number={298},
  date={2005},
  pages={271--318},
  issn={0303-1179},
}

\bib{rapzink96}{book}{
  author={Rapoport, M.},
  author={Zink, Th.},
  title={Period spaces for $p$-divisible groups},
  series={Annals of Mathematics Studies},
  volume={141},
  publisher={Princeton University Press},
  place={Princeton, NJ},
  date={1996},
  pages={xxii+324},
  isbn={0-691-02782-X},
  isbn={0-691-02781-1},
}

\bib{sm09a}{article}{
  author={Smithling, Brian D.},
  title={Topological flatness of orthogonal local models in the split, even case. I},
  status={to appear in Math. Ann.},
  label={Sm1},
}

\bib{sm09b}{article}{
  author={Smithling, Brian D.},
  title={Admissibility and permissibility for minuscule cocharacters in orthogonal groups},
  date={2010-03-12},
   status={preprint, \href{http://arxiv.org/abs/1001.0937}{\texttt{arXiv:1001.0937v2 [math.AG]}}},
  label={Sm2},
}

\bib{sm10a}{article}{
  author={Smithling, Brian D.},
  title={Topological flatness of local models for ramified unitary groups. II. The even dimensional case},
  status={in preparation},
  label={Sm3},
}

\bib{st68}{book}{
  author={Steinberg, Robert},
  title={Endomorphisms of linear algebraic groups},
  series={Memoirs of the American Mathematical Society, No. 80},
  publisher={American Mathematical Society},
  place={Providence, R.I.},
  date={1968},
  pages={108},
}

\end{biblist}
\end{bibdiv}

\end{document}